\def\A{{\mathcal A}}
\def\B{{\mathcal B}}
\def\Okubo{{\mathcal O}}
\def\R{{\mathbb R}}
\def\F{{\mathbb F}}
\def\N{{\mathbb N}}
\def\C{{\mathbb C}}
\def\rk{{\rm rk \,}}
\def\rank{{\rm rank}\,}
\def\chrs{{\rm char}\, }
\def\tr{{\rm tr}\,}
\def\Lin{{\mathcal L}\,}
\def\dim{{\rm dim}\,}
\newtheorem{theorem}{Theorem}[section]
\newtheorem{lemma}[theorem]{Lemma}
\newtheorem{corollary}[theorem]{Corollary}
\newtheorem{proposition}[theorem]{Proposition}
\theoremstyle{definition}
\newtheorem{example}[theorem]{Example}
\newtheorem{definition}[theorem]{Definition}
\theoremstyle{remark}
\newtheorem{remark}[theorem]{Remark}
\providecommand{\keywords}[1]{\textbf{Keywords:} #1}
\providecommand{\msc}[1]{\textbf{MSC 2020:} #1}
\begin{document}

\title{On the lengths of Okubo algebras}
\author{A.~E.~Guterman$^a$ and S.~A.~Zhilina$^{b,c,d,}$\thanks{The work of the second author was supported by the Ministry of Science and Higher Education of the Russian Federation (Goszadaniye No. 075-00337-20-03, project No. 0714-2020-0005).}}
\date{\small \em
$^a$Department of Mathematics, Bar-Ilan University, Ramat-Gan, 5290002, Israel\\
$^b$Department of Mathematics and Mechanics, Lomonosov Moscow State\\ University, Moscow, 119991, Russia\\
$^c$Moscow Center for Fundamental and Applied Mathematics, Moscow, 119991, Russia\\
$^d$Moscow Institute of Physics and Technology, Dolgoprudny, 141701, Russia
}

\maketitle

\begin{abstract}
We compute the lengths of two particular cases of (possibly non-unital) composition algebras, namely, standard composition algebras and Okubo algebras over an arbitrary field~$\F$. These results finish the complete description of lengths of symmetric composition algebras and finite-dimensional flexible composition algebras.
\end{abstract}

\keywords{length function, standard composition algebras, symmetric composition algebras, Okubo algebras, pseudo-octonions.}

\msc{15A03, 17A20, 17A75.}

\let\thefootnote\relax\footnote{{\em Email addresses:} \texttt{alexander.guterman@biu.ac.il} (A.~E.~Guterman), \texttt{s.a.zhilina@gmail.com}\\ (S.~A.~Zhilina)}

\section{Introduction}

Let~$\A$ be an algebra over a field $\F$, not necessarily unital, not necessarily associative. We assume that~$\A$ is equipped with a strictly nondegenerate quadratic form~$n(\cdot)$ which means that the symmetric bilinear form $n(\cdot,\cdot)$ on~$\A$, associated to the norm $n(\cdot)$ and given by the formula $n(a,b) = n(a + b) - n(a) - n(b)$, is nondegenerate. This bilinear form is called the {\em polar form} of $n(\cdot)$. Clearly, $n(a,a) = 2n(a)$.

\begin{definition}
The algebra~$\A$ is called {\em a composition algebra} if the norm $n(\cdot)$ permits composition, that is, $n(ab) = n(a) n(b)$ for all $a, b \in \A$.
\end{definition}

In 1898 Hurwitz showed that the only possible values of dimension for a unital composition division algebra over $\R$ are $1$, $2$, $4$, and $8$. We refer to these algebras as to {\em classical Hurwitz algebras.} Later Hurwitz theorem was extended by Jacobson to arbitrary unital composition algebras over an arbitrary field~$\F$, $\chrs \F \ne 2$. Moreover, he showed that if $\chrs \F \neq 2$, then any unital composition algebra~$\A$ over~$\F$ is isomorphic to a Cayley--Dickson algebra $\A_n$ of dimension $2^n$, where $0 \le n \le 3$, see~\cite[p.~61, Theorem~1]{Jacobson}. This result was generalized in~\cite[p.~32, Theorem~1]{Zhevlakov} to a field~$\F$ of arbitrary characteristic. Given the unit element $e$ in~$\A$, the involution $a \mapsto \bar{a}$ on~$\A$ is defined as $\bar{a} = n(a,e) - a$, which satisfies $n(a) = a \bar{a} = \bar{a} a$ for all $a \in \A$. By using it, we can define standard composition algebras.

\begin{definition}[{\cite[p.~378]{ElduquePerez3}}] \label{definition:standard}
Let~$\A$ be a unital composition algebra over an arbitrary field~$\F$. Then a {\em standard composition algebra} is one of the following four algebras $\A^* = (\A,*)$ where
$$
\text{(I) } a*b = ab, \quad \text{(II) } a*b = \bar{a}b, \quad \text{(III) } a*b = a\bar{b}, \quad \text{(IV) } a*b = \bar{a}\bar{b}.
$$
\end{definition}

\begin{definition}
An algebra~$\A$ is called {\em flexible} if $(ab)a = a(ba)$ for all $a,b \in \A$.  
\end{definition}

It follows from the definition that unital composition algebras are exactly standard composition algebras of type I. The algebras of types II, III, and IV are non-unital. The algebras of types I and IV are flexible, and the algebras of types II and III are not. The algebras of type I are usually called {\em Hurwitz algebras,} and the algebras of type IV are called {\em para-Hurwitz algebras,} see~\cite{ElduqueMyung1}.

\medskip

In general, non-unital composition algebras can be rather complicated. In particular, by~\cite[Section~2]{ElduqueMyung2}, there exists a commutative (hence flexible) infinite-dimensional composition algebra over an arbitrary field~$\F$. Moreover, by~\cite[Examples~1 and~2]{ElduquePerez2}, over any field~$\F$ there is a left unital composition algebra of arbitrary infinite dimension. Hence some additional  properties are demanded to classify composition algebras, for example, finite dimensionality, power-associativity, flexibility, or division. It was shown in~\cite[p.~957]{Kaplansky} that, if the composition algebra~$\A$ is non-unital but there is a norm-one element $a \in \A$ such that the left and right multiplications by $a$, denoted respectively by $L_a$ and $R_a$, are bijections, then we can define a new multiplication on~$\A$ by
$$
x \times y = R_a^{-1}(x)L_a^{-1}(y).
$$
The algebra $(\A, \times)$ is a composition algebra with respect to the same quadratic form $n(\cdot)$, and $e = a^2$ is its unit element. Hence~$\A$ has dimension $1$, $2$, $4$, or~$8$. Such an element $a \in \A$ always exists if~$\A$ is finite-dimensional or~$\A$ is a division algebra.

It follows from~\cite[Theorem~3.2]{ElduqueMyung3} that any finite-dimensional flexible composition algebra~$\A$ over a field~$\F$ of arbitrary characteristic is either unital, i.e., a Hurwitz algebra, or symmetric, i.e., for any $a, b \in \A$ we have
$$
(a * b) * a = a * (b * a) = n(a) b.
$$
Full classification of symmetric composition algebras was obtained in numerous papers by Okubo and Osborn, and also by Elduque, Myung and P\'erez. It was shown in~\cite[Theorem~2.9]{Elduque5} that any symmetric composition algebra is either a form of a para-Hurwitz algebra or an Okubo algebra. Thus it is always finite-dimensional. By~\cite[p.~298]{Elduque1},~\cite[Theorems~4.2 and~4.3]{ElduqueMyung1}, and~\cite[Lemma~3.3]{ElduquePerez1}, a form of a para-Hurwitz algebra is either para-Hurwitz itself or isomorphic to a certain two-dimensional algebra. Okubo algebras, which are the main object of this paper, were first introduced and described in~\cite{Okubo1,Okubo2} over the fields of real and complex numbers. In~\cite{Okubo_symmetric1, Okubo_symmetric2} they were generalized to an arbitrary field~$\F$, $\chrs \F \neq 2$, and then the case when $\chrs \F = 2$ was considered in~\cite{ElduquePerez1, Elduque1}. We refer the reader to~\cite{Elduque2} for the survey on the properties of Okubo algebras and, in particular, their automorphisms and derivations.

\medskip

It is well known that various numerical invariants play an important role in the study of finite-dimensional algebras. One of them is the length function which describes the guaranteed number of multiplications which will be sufficient to generate the whole algebra with its arbitrary generating set. It was introduced in 1959 for the algebra of $3 \times 3$ matrices over an arbitrary field $\F$, see~\cite{Spencer-1959}. It was soon extended to square matrices of arbitrary size~\cite{Paz}, and then to arbitrary associative algebras. In particular, Pappacena~\cite{Pappacena} obtained the upper bound for the length of an arbitrary finite-dimensional unital associative algebra. The notion of length was generalized to unital non-associative algebras in~\cite{Guterman_upper-bounds, Guterman_sequences} where the method of characteristic sequences was introduced. This method provides a sharp upper bound for the length of a unital non-associative algebra, cf.~\cite[Section~4]{Guterman_upper-bounds}, a locally complex algebra, cf.~\cite[Section~6]{Guterman_upper-bounds}, and a quadratic algebra, see~\cite{Guterman_quadratic}. The lengths of classical Hurwitz algebras have been computed in~\cite{Guterman_Hurwitz-algebras}.

In our work we continue the study of the lengths in the non-associative case and focus on composition algebras. In~\cite{our_standard-composition-algebras} we have computed the lengths of standard composition algebras over an arbitrary field~$\F$ with $\chrs \F \neq 2$. The current paper aims to extend these results to the case of an arbitrary field~$\F$ and compute the lengths of another important class of composition algebras, namely, Okubo algebras. We will rely vastly on our previous paper~\cite{our_descendingly-flexible-algebras}, where we have introduced the conditions of descending flexibility and descending alternativity which impose restrictions on the lengths of $a(ba)$ and $(ab)a$, and of $a(ab)$ and $(ba)a$ for all $a,b \in \A$, respectively. In~\cite[Theorems~4.5 and~5.18]{our_descendingly-flexible-algebras} we have obtained an upper bound on the lengths of descendingly flexible and descendinly alternative algebras, and this result proves to be very useful for studying the lengths of composition algebras. The main tool used in its proof is the sequence of differences between the dimensions of certain nested linear subspaces. The properties of this sequence are of an independent interest, since, for instance, they appear to be extremely helpful in the proof of Lemma~\ref{lemma:two-element-generating} of the current paper, where we show that if $\Okubo$ is an Okubo algebra without nonzero idempotents and zero divisors, then the length of any its two-element generating system equals three.

\medskip

Our paper is organized as follows. In Subsection~\ref{subsection:length-definition} we provide a formal definition of the length function and some basic facts concerning it. In Subsection~\ref{subsection:mixing-and-sliding} we recall the definitions and properties of descendingly flexible and descendingly alternative algebras.

Section~\ref{section:standard-algebras} is devoted to standard composition algebras. In Lemma~\ref{lemma:standard-flexibility} of Subsection~\ref{subsection:identities} we show that standard composition algebras are both descendingly flexible and descendingly alternative. Based on the upper bound for the lengths of descendingly alternative algebras, in Subsection~\ref{subsection:standard-algebras} we present a new proof for the lengths of standard composition algebras over an arbitrary field $\F$, including the case when $\chrs \F = 2$. The main result of this section is Theorem~\ref{theorem:standard-length}.

We then consider Okubo algebras in Section~\ref{section:Okubo-algebra}. In Subsection~\ref{subsection:Okubo-properties} we provide a survey of Okubo algebras over arbitrary fields. All Okubo algebras are descindingly flexible, so it follows from the upper bound on the lengths of descendingly flexible algebras that the length of any Okubo algebra is at most four. In Theorem~\ref{theorem:Okubo-length} we show that, if an Okubo algebra has either nonzero idempotents or zero divisors, then this bound is achieved. Its proof uses the fact that such Okubo algebras have explicit and convenient multiplication tables, and by using them we can find a generating system whose length equals four. If an Okubo algebra~$\Okubo$ has no nonzero idempotents or zero divisors, then any its nonzero element generates a two-dimensional subalgebra, and any two elements, which do not belong to the same two-dimensional subalgebra, generate~$\Okubo$, see Lemma~\ref{lemma:Okubo-subalgebras} and Corollary~\ref{corollary:two-element-generating}. Hence, by Theorem~\ref{theorem:Okubo-length-exceptional}, the length of~$\Okubo$ equals three.

Thus the problem of length computation for standard composition algebras and Okubo algebras is solved completely. In view of~\cite[Theorem~2.9]{Elduque5},~\cite[Theorems~4.2 and~4.3]{ElduqueMyung1}, and~\cite[Theorem~3.2]{ElduqueMyung3}, Theorems~\ref{theorem:standard-length},~\ref{theorem:Okubo-length}, and~\ref{theorem:Okubo-length-exceptional} give us a complete description of lengths of symmetric composition algebras and finite-dimensional flexible composition algebras over an arbitrary field~$\F$, see Corollary~\ref{corollary:all-lengths}.

\section{Lengths of non-associative algebras} \label{section:length-properties}

\subsection{Definitions and main properties} \label{subsection:length-definition}

Let~$\A$ be an arbitrary finite-dimensional algebra over a field $\F$, possibly non-unital and non-associative. Given a subset $S \subseteq \A$ and $k \in \N$, any product of $k$ elements of $S$ is called a {\em word of length $k$} in letters from~$S$. If the algebra is unital, the unity~$e$ is considered as a word of {\em zero length} in $S$. Otherwise, we assume that there are no words of zero length. We denote the number of elements in $S$ by $|S|$, and the rank of $S$ as a linear system by $\rank(S)$.
 
Note that different arrangements of brackets provide different words of the same length. The set of all words in $S$ with length at most $k$ is denoted by $S^k$, $k \ge 0$. Similarly to the associative case, $k < m$ implies that $S^k \subseteq S^m$.

Recall that the linear span of $S$, denoted by $\Lin(S)$, is the set of all finite linear combinations of elements of $S$ with coefficients from~$\F$. We denote $\Lin_k(S) = \Lin(S^k)$, $k \geq 0$, and $\Lin_{\infty}(S) = \bigcup \limits_{k=0}^\infty \Lin_k(S)$. Clearly, for all $k$ we have $\Lin_k(S) \subseteq \Lin_{k+1}(S)$. Moreover, $\Lin_0(S) = \F$ if~$\A$ is unital, and $\Lin_0(S) = \{ 0 \}$ otherwise. It can be easily seen that a set $S$ is generating for~$\A$ if and only if $\A = \Lin_{\infty}(S)$.

\begin{definition} \label{definition:system-length} 
The	{\em length of a set} $S$ is $l(S) = \min \{ k \: | \: \Lin_k(S) = \Lin_{\infty}(S) \}$. 
\end{definition}

\begin{definition} \label{definition:algebra-length}
The	{\em length of an algebra~$\A$} is the maximum of lengths of its generating sets, i.e., $l(\A) = \max\limits_S \: \{l(S) \: | \: \Lin_{\infty}(S) = \A\}$. 
\end{definition}

If~$\A$ is associative, then $\Lin_{k}(S) = \Lin_{k+1}(S)$ implies $\Lin_{k}(S)=\Lin_{\infty}(S)$. Therefore, for a unital algebra~$\A$ we have $l(\A) \le \dim \A - 1$, and for a non-unital algebra~$\A$ we have $l(\A) \le \dim \A$. However, these inequalities do not hold for non-associative algebras, see~\cite[Example~2.2]{Guterman_Hurwitz-algebras}.

\begin{remark}[{\cite[Lemma~2.11, Corollary~2.12]{Guterman_upper-bounds}}] \label{remark:equal-linear-spans}
Let $S, S'$ be two subsets of~$\A$ such that $\Lin_1(S) = \Lin_1(S')$. Then $\Lin_k(S) = \Lin_k(S')$ for all $k \in \N$, so $\Lin_{\infty}(S) = \Lin_{\infty}(S')$. Hence $S$ is generating for~$\A$ if and only if $S'$ is generating for~$\A$, and $l(S) = l(S')$.
\end{remark}

We now define the sequence of differences for an arbitrary system $S$ which describes how fast the dimension of the linear span of words of length at most $k$ in $S$ is growing. It is a very useful tool for length computation, along with the method of characteristic sequences introduced in~\cite{Guterman_upper-bounds, Guterman_sequences}.

\begin{definition} \label{definition:sequence-differences}
Let $S \subseteq \A$. The sequence of differences between the dimensions of the linear spans of words in $S$ is $D(S) = \{ d_k(S) \}_{k = 0}^{\infty} = \{ d_k \}_{k = 0}^{\infty}$, where
\begin{align*}
    d_0 &= \dim \Lin_0(S) =
    \begin{cases}
    0, & \A \text{ is non-unital},\\
    1, & \A \text{ is unital};
    \end{cases}\\
    d_k &= \dim \Lin_k(S) - \dim \Lin_{k-1}(S), \quad k \in \N.
\end{align*}
Clearly, the definition of $d_0$ does not depend on a set $S$, and its value is a function of an algebra~$\A$ itself.
\end{definition}

\begin{proposition}[{\cite[Proposition~1.4]{our_descendingly-flexible-algebras}}] \label{proposition:differences-length}
Let~$\A$ be an arbitrary finite-dimensional algebra over a field $\F$, $S \subseteq \A$. Then
\begin{enumerate}[{\rm (1)}]
    \item $S$ is generating for~$\A$ if and only if $\sum \limits_{k = 0}^{\infty} d_k = \dim \A$;
    \item $l(S) = \max \{ k \in \N_0 \; | \; d_k \neq 0 \}$, where we assume that $\max \varnothing = 0$;
    \item $d_1 =
    \begin{cases}
    \rank(S), & \A \text{ is non-unital},\\
    \rank(S \cup \{ e \}) - 1, & \A \text{ is unital}.
    \end{cases}$
\end{enumerate}
\end{proposition}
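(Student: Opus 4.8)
The plan is to derive all three statements directly from the definitions of $\Lin_k(S)$, $d_k$, and $l(S)$, together with the finite-dimensionality of $\A$; no structural facts about the multiplication are needed. Throughout I write $W_k = \Lin_k(S)$, so that $W_0 \subseteq W_1 \subseteq \cdots$ is a non-decreasing chain of subspaces of $\A$ with $\Lin_\infty(S) = \bigcup_{k \geq 0} W_k$, and $d_0 = \dim W_0$, $d_k = \dim W_k - \dim W_{k-1}$ for $k \geq 1$. Since $\A$ is finite-dimensional, the chain is bounded and hence stabilizes: there is $N$ with $W_k = W_N = \Lin_\infty(S)$ for all $k \geq N$, so $d_k = 0$ for all $k > N$ and only finitely many differences are nonzero. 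For~(1) I would telescope: by definition $\sum_{k=0}^{M} d_k = \dim W_M$ for every $M$, since the terms with $k \geq 1$ cancel in pairs and $d_0 = \dim W_0$. Letting $M \to \infty$ and using stabilization gives $\sum_{k=0}^{\infty} d_k = \dim \Lin_\infty(S)$. As $\Lin_\infty(S)$ is a subspace of the finite-dimensional space $\A$, the equality $\dim \Lin_\infty(S) = \dim \A$ is equivalent to $\Lin_\infty(S) = \A$, which is precisely the condition that $S$ is generating.

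For~(2) the point I want to stress is that the correct index is the \emph{last} one at which the dimension grows, not the first one at which a difference vanishes. Set $m = \max\{k \in \N_0 \mid d_k \neq 0\}$, a well-defined nonnegative integer by the finiteness observed above, with $m = 0$ under the convention $\max \varnothing = 0$ when every $d_k$ vanishes. For every $k > m$ we have $d_k = 0$, i.e.\ $W_k = W_{k-1}$, so iterating yields $W_k = W_m$ for all $k \geq m$ and therefore $\Lin_m(S) = \Lin_\infty(S)$. On the other hand, if $m \geq 1$ then $d_m \neq 0$ forces $W_{m-1} \subsetneq W_m = \Lin_\infty(S)$, so $\Lin_{m-1}(S) \neq \Lin_\infty(S)$; hence $l(S) = \min\{k \mid \Lin_k(S) = \Lin_\infty(S)\} = m$. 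The edge cases $m = 0$ (whether $d_0 = 1$ in the unital situation or all $d_k = 0$ in the non-unital one) are checked directly and likewise give $l(S) = 0$.

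For~(3) I would simply unwind $d_1 = \dim W_1 - \dim W_0$ in the two cases. In the non-unital case $W_0 = \{0\}$, while $W_1 = \Lin(S)$ because the words of length at most~$1$ are exactly the elements of $S$; hence $d_1 = \dim \Lin(S) = \rank(S)$. In the unital case $W_0 = \Lin_0(S) = \F e$ is one-dimensional, and $W_1 = \Lin(S \cup \{e\})$ since the words of length at most~$1$ are the unit $e$ together with the elements of $S$; hence $d_1 = \rank(S \cup \{e\}) - 1$. The telescoping in~(1) and the case split in~(3) are routine, and the one place demanding genuine care is~(2): I must resist the temptation to argue that the chain stabilizes as soon as a single difference is zero. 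For a non-associative algebra this is false, since gaps of the form $d_j \neq 0$, $d_{j+1} = 0$, $d_{j+2} \neq 0$ can occur, so the whole force of the statement lies in taking the maximal nonzero index rather than the minimal zero index, and the argument must pass through $\Lin_\infty(S)$ via finiteness rather than through a propagation-of-stabilization claim. This is consistent with Remark~\ref{remark:equal-linear-spans}, which only guarantees equality of all $\Lin_k$ from equality at level~$1$, not from equality at some intermediate level.
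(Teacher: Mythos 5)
Your proof is correct, but note that there is nothing in this paper to compare it against: Proposition~\ref{proposition:differences-length} is imported by citation from \cite{our_descendingly-flexible-algebras} and is not proved here. Your argument is the natural one from the definitions --- telescoping $\sum_{k=0}^{M} d_k = \dim \Lin_M(S)$ plus stabilization of the chain $\Lin_0(S) \subseteq \Lin_1(S) \subseteq \cdots$ in a finite-dimensional algebra for (1), identification of the last index of growth for (2), and direct computation of $\Lin_0(S)$, $\Lin_1(S)$ in the unital and non-unital cases for (3) --- and you correctly isolate the one genuine subtlety: in a non-associative algebra a single vanishing difference $d_j = 0$ does not force the chain to stabilize (this is exactly why Lemma~\ref{lemma:full-stabilization} is a nontrivial hypothesis-dependent result), so part (2) must be argued through $\Lin_\infty(S)$ and the \emph{maximal} nonzero index, as you do.
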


\subsection{Descendingly flexible and descendingly alternative algebras} \label{subsection:mixing-and-sliding}

Consider $a, b, c \in \A$. We denote by $\Lin'_2(a,b,c)$ the linear span of all words of length at most two in $a,b,c$, except for the words $aa$, $bb$ and $cc$, that is,
$$
\Lin'_2(a,b,c) =
\begin{cases}
\phantom{e,{}}\Lin(a,b,c,ab,ba,cb,bc,ac,ca), & \A \text{ is non-unital},\\
\Lin(e,a,b,c,ab,ba,cb,bc,ac,ca), & \A \text{ is unital}.
\end{cases}
$$
Note also that
$$
\Lin_1(a,b,aa,ab,ba) = 
\begin{cases}
\phantom{e,{}}\Lin(a,b,aa,ab,ba), & \A \text{ is non-unital},\\
\Lin(e,a,b,aa,ab,ba), & \A \text{ is unital}.
\end{cases}
$$

\begin{definition} \label{definition:descending}
\leavevmode
\begin{itemize}
\item We say that an algebra~$\A$ is {\em descendingly flexible} if for all $a, b, c \in \A$ it holds that
\begin{align}
    (ab)a, \;\; a(ba) &\in \Lin_1(a,b,aa,ab,ba), \label{equation:general-flexibility}\\
    (ab)c + (cb)a &\in \Lin'_2(a,b,c), \label{equation:general-flexibility-1}\\
    a(bc) + c(ba) &\in \Lin'_2(a,b,c). \label{equation:general-flexibility-2}
\end{align}

\item We say that an algebra~$\A$ is {\em descendingly alternative} if for all $a, b, c \in \A$ it holds that
\begin{align}
    (ba)a, \;\; a(ab) &\in \Lin_1(a,b,aa,ab,ba), \label{equation:general-alternativity}\\
    (ab)c + (ac)b &\in \Lin'_2(a,b,c), \label{equation:general-alternativity-1}\\
    a(bc) + b(ac) &\in \Lin'_2(a,b,c). \label{equation:general-alternativity-2}
\end{align}
\end{itemize}
\end{definition}

Clearly, if $\chrs \F \neq 2$, then Eqs.~\eqref{equation:general-flexibility-1} and~\eqref{equation:general-flexibility-2} together imply Eq.~\eqref{equation:general-flexibility}. Similarly, Eqs.~\eqref{equation:general-alternativity-1} and~\eqref{equation:general-alternativity-2} imply Eq.~\eqref{equation:general-alternativity}. Therefore, conditions~\eqref{equation:general-flexibility} and~\eqref{equation:general-alternativity} are needed only for $\chrs \F = 2$.

The following proposition provides a convenient sufficient condition for an algebra to be descendingly flexible or descendingly alternative. It is a stronger form of Eqs.~\eqref{equation:general-flexibility} and~\eqref{equation:general-alternativity}.

\begin{proposition}[{\cite[Proposition~3.3]{our_descendingly-flexible-algebras}}] \label{proposition:sufficient-condition}
\leavevmode
\begin{enumerate}[{\rm (1)}]
    \item Assume that for all $a, b \in \A$ we have $(ab)a, \, a(ba) \in \Lin_1(a,b,aa,ab,ba)$, and the coefficient at $aa$ depends only on $b$. In other words, the expressions $(ab)a$ and $a(ba)$ can be represented as
    \begin{equation} \label{equation:general-flexibility-3}
    f_1(a,b)a + f_2(a,b)b + g(b)aa + f_3(a,b)ab + f_4(a,b)ba + f_5(a,b)e
    \end{equation}
    for some functions $f_j: \A \times \A \to \F$, $j = 1, \dots, 5$, and $g: \A \to \F$. If~$\A$ is non-unital, then we assume that $f_5(a,b) = 0$ for all $a,b \in \A$. Then~$\A$ is descendingly flexible.
    \item If for all $a, b \in \A$ it holds that $(ba)a, \, a(ab) \in \Lin_1(a,b,aa,ab,ba)$, and the coefficient at $aa$ depends only on $b$, then~$\A$ is descendingly alternative.
\end{enumerate}
\end{proposition}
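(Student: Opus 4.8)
The plan is to obtain the two bilinear conditions of descending flexibility from the single one-variable condition by \emph{linearizing} the repeated letter $a$, the point being that the square term cancels precisely because its coefficient $g$ depends only on $b$. Observe first that condition~\eqref{equation:general-flexibility} is nothing but the standing hypothesis $(ab)a,\, a(ba) \in \Lin_1(a,b,aa,ab,ba)$, and similarly~\eqref{equation:general-alternativity} is part of the hypothesis in~(2); so in each part only the two bilinear identities remain to be checked.

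For part~(1), I would take the representation~\eqref{equation:general-flexibility-3} of $(ab)a$, write the same representation with $a$ replaced by $c$ and with $a$ replaced by $a+c$, and expand the latter by bilinearity of the product as
\begin{equation*}
((a+c)b)(a+c) = (ab)a + (ab)c + (cb)a + (cb)c.
\end{equation*}
Subtracting the representations of $(ab)a$ and $(cb)c$ from that of $((a+c)b)(a+c)$ leaves $(ab)c + (cb)a$ on the left. On the right, the square contributions amount to $g(b)\bigl[(a+c)(a+c) - aa - cc\bigr] = g(b)(ac + ca)$, so the forbidden words $aa$ and $cc$ drop out exactly because the one scalar $g(b)$ multiplies all three squares, while every surviving summand is a scalar multiple of one of $a,b,c,ab,ba,cb,bc,ac,ca$ (together with $e$ in the unital case; in the non-unital case $f_5 \equiv 0$ removes the $e$ term). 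Hence $(ab)c + (cb)a \in \Lin'_2(a,b,c)$, which is~\eqref{equation:general-flexibility-1}. The same computation applied to $a(ba)$, expanding $(a+c)\bigl(b(a+c)\bigr) = a(ba) + a(bc) + c(ba) + c(bc)$, yields~\eqref{equation:general-flexibility-2}.

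For part~(2), linearizing $(ba)a$ and $a(ab)$ in $a$ in the very same way produces $(ba)c + (bc)a \in \Lin'_2(a,b,c)$ and $a(cb) + c(ab) \in \Lin'_2(a,b,c)$. These are not literally~\eqref{equation:general-alternativity-1} and~\eqref{equation:general-alternativity-2}, but they turn into them after renaming the free variables: swapping $a \leftrightarrow b$ in the first sends $(ba)c + (bc)a$ to $(ab)c + (ac)b$, and swapping $b \leftrightarrow c$ in the second sends $a(cb) + c(ab)$ to $a(bc) + b(ac)$. This is legitimate because each relation holds for all $a,b,c \in \A$ and because $\Lin'_2(a,b,c)$ is invariant under every permutation of $a,b,c$.

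The computations are routine bookkeeping, and the one genuinely essential point — the only place where the hypothesis is used in full — is the cancellation of the squares $aa$ and $cc$, which would fail if the coefficient $g$ at $aa$ were allowed to depend on $a$. I would therefore spell out the derivation of~\eqref{equation:general-flexibility-1} in full so that this cancellation is visible, and then dispatch the remaining three bilinear identities by the identical argument together with the symmetry of $\Lin'_2$.
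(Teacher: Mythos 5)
Your proof is correct. There is, however, nothing in this paper to compare it against: the proposition is imported by citation from the companion preprint \cite[Proposition~3.3]{our_descendingly-flexible-algebras}, and no proof of it is reproduced in the present text. Your linearization argument is the natural (and surely the intended) one, and you correctly isolate the two points where care is actually needed: first, the squares $aa$ and $cc$ cancel in $((a+c)b)(a+c) - (ab)a - (cb)c$ only because the single scalar $g(b)$ multiplies all three square terms, which is precisely where the hypothesis that the coefficient at $aa$ is independent of $a$ enters; second, in part~(2) the linearized relations $(ba)c + (bc)a \in \Lin'_2(a,b,c)$ and $a(cb) + c(ab) \in \Lin'_2(a,b,c)$ turn into~\eqref{equation:general-alternativity-1} and~\eqref{equation:general-alternativity-2} only after relabeling the free variables, which is legitimate since the relations are universally quantified and $\Lin'_2(a,b,c)$ is invariant under permutations of $a,b,c$.
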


\begin{example}
\leavevmode
\begin{enumerate}[{\rm (1)}]
    \item Any Okubo algebra $\Okubo$ over an arbitrary field~$\F$ is descendingly flexible, since it satisfies $(ab)a = a(ba) = n(a)b$ for all $a,b \in \Okubo$, see~\cite[p. 284, pp. 286-287]{Elduque1} and~\cite[p. 1199]{ElduqueMyung1}. However, Okubo algebras with nonzero idempotents or zero divisors are not descendingly alternative, see Example~\ref{example:Okubo-alternative} below.
    \item We prove in Lemma~\ref{lemma:standard-flexibility} that standard composition algebras are both descendingly flexible and descendingly alternative. In particular, this holds for any Cayley--Dickson algebra $\A_n$, $0 \leq n \leq 3$, over a field $\F$, $\chrs \F \neq 2$. However, in Proposition~\ref{proposition:Cayley-Dickson-flexibility} we show that for $n \geq 4$ the algebra $\A_n$ is neither descendingly flexible nor descendingly alternative whenever $\chrs \F \neq 2$.
\end{enumerate}
\end{example}

\begin{remark}
Descendingly flexible (alternative) algebras are not necessarily flexible (alternative). For example, standard composition algebras of types~II and~III are both descendingly flexible and descendingly alternative, see Lemma~\ref{lemma:standard-flexibility} below, however, they are neither flexible nor alternative. 

Conversely, flexible (alternative) algebras need not be descendingly flexible (alternative), see~\cite[Remark~3.5]{our_descendingly-flexible-algebras}. Moreover, \cite[Example~3.7]{our_descendingly-flexible-algebras} shows that the classes of descendingly flexible and descendingly alternative algebras are not contained in each other.
\end{remark}

We now list some of the most important results on the lengths of descendingly flexible and descendingly alternative algebras and their generating sets, which were obtained in~\cite{our_descendingly-flexible-algebras}.

\begin{lemma}[{\cite[Lemmas~2.5 and~2.8, Proposition~3.6]{our_descendingly-flexible-algebras}}] \label{lemma:full-stabilization} \label{lemma:general-recursive-length}
Assume that~$\A$ is either descendingly flexible or descendingly alternative, and let $S \subseteq \A$. Then
\begin{enumerate}[{\rm (1)}]
    \item for all $m \in \N$ we have
    $$
    \Lin_{m+1}(S) = \Lin_m(S) \cdot S + S \cdot \Lin_m(S) + \Lin_m(S);
    $$
    \item if $\Lin_m(S) = \Lin_{m+1}(S)$ for some $m \ge 0$, then $\Lin_m(S) = \Lin_{m+1}(S) = \Lin_{m+2}(S) = \dots$.
\end{enumerate}
\end{lemma}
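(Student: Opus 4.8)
The plan is to treat part~(1) as the heart of the matter and derive part~(2) from it by a short induction. For part~(1) the inclusion ``$\supseteq$'' is immediate: any product $w\cdot s$ with $w\in\Lin_m(S)$ and $s\in S$ is a linear combination of words of length $\le m+1$, so $\Lin_m(S)\cdot S\subseteq\Lin_{m+1}(S)$, and likewise $S\cdot\Lin_m(S)\subseteq\Lin_{m+1}(S)$ and $\Lin_m(S)\subseteq\Lin_{m+1}(S)$. For the nontrivial inclusion ``$\subseteq$'' it suffices, since $\Lin_{m+1}(S)=\Lin_m(S)+\Lin(\text{words of length }m+1)$, to show that each individual word $w$ of length exactly $m+1$ lies in $W_m:=\Lin_m(S)\cdot S+S\cdot\Lin_m(S)+\Lin_m(S)$.

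Write such a word as $w=uv$ where $u,v$ are subwords with $|u|+|v|=m+1$. If $|v|=1$ then $w\in\Lin_m(S)\cdot S$, and if $|u|=1$ then $w\in S\cdot\Lin_m(S)$, so I may assume $|u|,|v|\ge 2$. I would then run a descent on the monovariant $\mu(w)=\min(|u|,|v|)$. The crucial observation is that if $x,y,z$ are subwords with $|x|+|y|+|z|=m+1$, then every generator of $\Lin'_2(x,y,z)$ is a word of length $\le m$ (the three single words have length $\le m-1$, and each admissible product of a distinct pair has length $m+1$ minus the length of the omitted word, hence $\le m$), together with $e\in\Lin_0(S)$ in the unital case; therefore $\Lin'_2(x,y,z)\subseteq\Lin_m(S)$. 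Now if $\mu(w)=|u|\le|v|$, I write the left factor as $u=u_1u_2$ and apply Eq.~\eqref{equation:general-flexibility-1}, which gives $w=(u_1u_2)v\equiv-(vu_2)u_1\pmod{\Lin_m(S)}$; the new word $(vu_2)u_1$ has right factor $u_1$ of length $|u_1|\le|u|-1<\mu(w)$, and since this is also strictly below its left factor length $|v|+|u_2|$, its monovariant is strictly smaller than $\mu(w)$. Symmetrically, if $\mu(w)=|v|<|u|$, I split the right factor $v=v_1v_2$ and apply Eq.~\eqref{equation:general-flexibility-2} to obtain $w\equiv-v_2(v_1u)\pmod{\Lin_m(S)}$, whose left factor $v_2$ has length $\le|v|-1<\mu(w)$. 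In both cases $w$ equals, modulo $\Lin_m(S)\subseteq W_m$, a word of length $m+1$ with strictly smaller monovariant, so strong induction on $\mu$ terminates at $\mu=1$, where the word already lies in $\Lin_m(S)\cdot S+S\cdot\Lin_m(S)$.

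This argument uses only the three-variable identities~\eqref{equation:general-flexibility-1} and~\eqref{equation:general-flexibility-2}, which hold in every characteristic, so no separate treatment of $\chrs\F=2$ is needed here. In the descendingly alternative case the same descent goes through verbatim, replacing Eq.~\eqref{equation:general-flexibility-1} by Eq.~\eqref{equation:general-alternativity-1} (giving $w\equiv-(u_1v)u_2$) and Eq.~\eqref{equation:general-flexibility-2} by Eq.~\eqref{equation:general-alternativity-2} (giving $w\equiv-v_1(uv_2)$); one checks that these rewrites again strictly decrease $\mu$. Finally, part~(2) follows formally from part~(1): if $\Lin_m(S)=\Lin_{m+1}(S)$, then applying the recursion at index $m+1$ yields $\Lin_{m+2}(S)=\Lin_{m+1}(S)\cdot S+S\cdot\Lin_{m+1}(S)+\Lin_{m+1}(S)=\Lin_m(S)\cdot S+S\cdot\Lin_m(S)+\Lin_m(S)=\Lin_{m+1}(S)$, and an obvious induction propagates the equality to all larger indices. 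The step I expect to require the most care is the descent in part~(1): pinning down the correct monovariant $\min(|u|,|v|)$, verifying that the appropriate identity-move strictly decreases it in each of the two cases, and confirming the containment $\Lin'_2(x,y,z)\subseteq\Lin_m(S)$ that guarantees the rewriting stays inside the target space $W_m$.
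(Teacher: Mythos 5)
Your proposal is correct, but there is nothing in this paper to compare it against: the paper does not prove Lemma~\ref{lemma:full-stabilization} at all, importing it by citation from the companion preprint \cite{our_descendingly-flexible-algebras}. Judged on its own, your descent argument is sound and is visibly the argument these definitions are engineered for: the exclusion of the squares $aa$, $bb$, $cc$ from $\Lin'_2(a,b,c)$ is exactly what makes your key containment $\Lin'_2(u_1,u_2,v)\subseteq\Lin_m(S)$ valid when $|u_1|+|u_2|+|v|=m+1$ (a square such as $u_1u_1$ may have length exceeding $m$, which is why it must be absent), and each of your four rewriting moves does strictly decrease $\min(|u|,|v|)$, as one checks from the chains $|u_1|\le|u|-1<|u|\le|v|<|v|+|u_2|$ and their three analogues. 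Two small remarks. First, your proof never invokes the two-variable conditions \eqref{equation:general-flexibility} and \eqref{equation:general-alternativity}, so you actually establish the lemma for any algebra satisfying only \eqref{equation:general-flexibility-1}--\eqref{equation:general-flexibility-2}, respectively \eqref{equation:general-alternativity-1}--\eqref{equation:general-alternativity-2}; that is a (mild) strengthening, consistent with the paper's observation that the two-variable conditions are only a characteristic-$2$ supplement. Second, in part~(2) the last equality of your chain invokes part~(1) at index $m$, which is not available when $m=0$ since part~(1) is stated for $m\in\N$; the fix is immediate, because $\Lin_0(S)\cdot S+S\cdot\Lin_0(S)+\Lin_0(S)\subseteq\Lin_1(S)$ holds trivially in both the unital and the non-unital cases, so part~(1) at index $1$ together with the hypothesis $\Lin_0(S)=\Lin_1(S)$ gives $\Lin_2(S)\subseteq\Lin_1(S)$, after which your induction proceeds as written.
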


\begin{theorem}[{\cite[Theorem~4.5]{our_descendingly-flexible-algebras}}] \label{theorem:descendingly-alternative-length}
Let~$\A$ be descendingly alternative, and $k = l(\A) \geq 2$. Then $\dim \A - d_0 \geq 2^{k-1} + k - 2$.
\end{theorem}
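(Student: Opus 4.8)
The plan is to fix a generating set $S$ that realizes the length, so that $l(S) = k$, and to translate the desired inequality into a statement about the sequence of differences $\{ d_j \} = \{ d_j(S) \}$. By Proposition~\ref{proposition:differences-length}(1) together with the definition of $d_j$ we have $\dim \A - d_0 = \sum_{j \geq 1} d_j$, and since $l(S) = k$ forces $d_j = 0$ for $j > k$ by part~(2), this sum is $\sum_{j=1}^{k} d_j$. Moreover, because $\A$ is descendingly alternative, Lemma~\ref{lemma:full-stabilization}(2) shows that once two consecutive spans coincide the whole chain stabilizes; hence the chain $\Lin_1(S) \subseteq \dots \subseteq \Lin_k(S) = \A$ is strictly increasing, i.e.\ $d_j \geq 1$ for every $1 \leq j \leq k$. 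Thus the theorem reduces to the purely numerical lower bound $\sum_{j=1}^{k} d_j \geq 2^{k-1} + k - 2$.

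The heart of the argument is a doubling estimate for the growth of the top layer. Writing $V_m = \Lin_m(S)/\Lin_{m-1}(S)$, so that $\dim V_m = d_m$, I would use the recursion $\Lin_{m+1}(S) = \Lin_m(S)\cdot S + S\cdot \Lin_m(S) + \Lin_m(S)$ from Lemma~\ref{lemma:full-stabilization}(1). Modulo $\Lin_m(S)$ this exhibits $V_{m+1}$ as a sum of two subspaces: the image $A_{m+1}$ of $V_m$ under right multiplication by generators from $S$, and the image $B_{m+1}$ under left multiplication. The goal is to show that each of $A_{m+1}, B_{m+1}$ has dimension close to $d_m$ while their intersection stays small, yielding a recursive inequality of the shape $d_{m+1} \geq 2 d_m - c$ with a bounded correction $c$. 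Here the descending alternativity identities~\eqref{equation:general-alternativity-1} and~\eqref{equation:general-alternativity-2} are exactly what controls the overlap $A_{m+1} \cap B_{m+1}$: an element realized both as a new product on the right and as a new product on the left is forced by these linearized relations to drop into a space of lower degree, collapsing the potential coincidences.

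Granting such a doubling recursion, the bound follows by induction. Combined with the base data $d_1, d_2 \geq 1$, geometric growth of the $d_m$ produces a lower bound whose telescoping sum is precisely $2^{k-1} + k - 2$; one checks that the constant $c$ and the base case are tuned so that the extremal profile $d_1 = d_2 = 1$, $d_m = 2^{m-2} + 1$ for $m \geq 3$, is matched exactly. The unital versus non-unital distinction enters only through $d_0$, which is why the statement is phrased in terms of $\dim \A - d_0$.

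The step I expect to be the main obstacle is establishing the doubling recursion rigorously, and in particular bounding $\dim(A_{m+1} \cap B_{m+1})$. Two points require care. First, near-injectivity of left and right multiplication on the quotient layer $V_m$ cannot be taken for granted for a general descendingly alternative algebra, since we have no invertible norm-one element at our disposal; one must instead extract the two independent copies of the growth directly from a basis of $V_m$. Second, separating the contributions of right- and left-multiplication is exactly where descending alternativity, rather than mere alternativity, is needed, and where the analysis diverges from the descendingly flexible case; this is the technical core that the identities~\eqref{equation:general-alternativity-1}--\eqref{equation:general-alternativity-2} are designed to handle.
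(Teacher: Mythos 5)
You should know at the outset that this paper does not contain a proof of this theorem at all: it is imported verbatim from the companion preprint \cite[Theorem~4.5]{our_descendingly-flexible-algebras}, so your attempt can only be measured against the tools the paper itself records. Your reduction is fine: by Proposition~\ref{proposition:differences-length} and Lemma~\ref{lemma:full-stabilization}(2), $\dim \A - d_0 = \sum_{j=1}^{k} d_j$ with $d_j \geq 1$ for $1 \leq j \leq k$. The genuine gap is your ``heart'', the upward doubling recursion $d_{m+1} \geq 2d_m - c$: it is false for descendingly alternative algebras, and no tuning of $c$ can save it. Your own extremal profile has $d_3 = 2d_2 + 1$, so forcing it from the base $d_1 = d_2 = 1$ would require $c \leq -1$ there, while at the top level the recursion must tolerate $0 = d_{k+1}$ against $d_k \geq 1$, which requires $c \geq 2$; these are incompatible (moreover, any universal bound with $c \leq 1$ would force $d_m$ to increase strictly forever once some $d_m \geq 2$, impossible in finite dimension). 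A concrete counterexample even below the top level: every $8$-dimensional Hurwitz algebra is descendingly alternative (Lemma~\ref{lemma:standard-flexibility} with Proposition~\ref{proposition:sufficient-condition}), and the generating set $S = \{\ell_1, \ell_2, \ell_4\}$ from the proof of Proposition~\ref{proposition:standard-length} has $d_1 = 3$, $d_2 = 3$, $d_3 = 1$, so $d_3 \geq 2d_2 - c$ would need $c \geq 5$, a value at which the recursion yields nothing. Worse still, your claimed extremal profile $d_1 = d_2 = 1$, $d_m = 2^{m-2}+1$ cannot occur in any descendingly alternative algebra: $d_1 = 1$ means by Remark~\ref{remark:equal-linear-spans} that one may take $S = \{a\}$, and then Eq.~\eqref{equation:general-alternativity} with $b = a$ gives $(aa)a, \, a(aa) \in \Lin_1(a, aa) \subseteq \Lin_2(S)$, whence $\Lin_3(S) = \Lin_2(S)$ and $l(S) \leq 2$ by Lemma~\ref{lemma:full-stabilization}(2); a rank-one system never reaches length $3$.

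The underlying misconception is the direction of the estimate. The identities \eqref{equation:general-alternativity}--\eqref{equation:general-alternativity-2}, applied with one argument a word of high degree, rewrite degree-$(m+1)$ products \emph{downwards}: modulo $\Lin_m(S)$ they impose linear dependencies among the products $ws$, $sw$ with $w$ of degree $m$ and $s \in S$, so what they can ever deliver are \emph{upper} bounds on $d_{m+1}$, never lower bounds --- the multiplication maps on the top quotient can and do collapse, as the Hurwitz profile $3,3,1$ shows, and nothing resembling injectivity is available (you correctly flagged this as the main obstacle; it is in fact fatal to your scheme). Accordingly, the theorem has to be proved in the contrapositive direction: nonvanishing of $d_k$ is only possible when the \emph{lower} differences are exponentially large, so the extremal profiles are decreasing in $m$, e.g. $d_j = 2^{k-1-j}+1$ for $j \leq k-2$ and $d_{k-1} = d_k = 1$, which sums to exactly $2^{k-1}+k-2$ and for $k = 3$ gives $3,1,1$. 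This downward shape is precisely what the paper records in the parallel flexible case: Proposition~\ref{proposition:differences-descendingly-flexible}(2) says that $d_m \neq 0$ for $m \in \{3,4\}$ forces $d_j \geq 2$ for all $1 \leq j \leq m-1$. To repair your argument you would need to replace the doubling-up recursion by an upper bound on the span of $\{ws, sw\}$ modulo $\Lin_m(S)$ coming from the linearized alternativity relations, and then convert it into lower bounds on $d_1, \dots, d_{k-2}$; the intersection estimate for $A_{m+1} \cap B_{m+1}$ as you set it up does not enter a correct proof.
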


\begin{remark} \label{remark:descendingly-alternative-length}
For instance, in the cases when $k \in \{ 2, 3, 4 \}$, Theorem~\ref{theorem:descendingly-alternative-length} states that:
\begin{itemize}
    \item if $l(\A) \geq 2$, then $\dim \A - d_0 \geq 2$;
    \item if $l(\A) \geq 3$, then $\dim \A - d_0 \geq 5$;
    \item if $l(\A) \geq 4$, then $\dim \A - d_0 \geq 10$.
\end{itemize}
\end{remark}

\begin{proposition}[{\cite[Proposition~2.7 and Lemma~5.15]{our_descendingly-flexible-algebras}}] \label{proposition:differences-descendingly-flexible}
Let~$\A$ be descendingly flexible, $S \subseteq \A$. Then
\begin{enumerate}[{\rm (1)}]
    \item if $d_m = 0$ for some $m \in \N$, then $d_k = 0$ for all $k \geq m$;
    \item if $d_m \geq 1$ for some $m \in \{ 3, 4 \}$, then $d_k \geq 2$ for any $1 \leq k \leq m-1$.
\end{enumerate}
\end{proposition}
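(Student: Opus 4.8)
The plan is to treat the two parts separately: part~(1) falls out of the stabilization lemma with no extra work, while for part~(2) I would reduce everything to two ``a small difference kills the next one'' statements. For part~(1), suppose $d_m = 0$ for some $m \in \N$. Then $\dim \Lin_m(S) = \dim \Lin_{m-1}(S)$, and since $\Lin_{m-1}(S) \subseteq \Lin_m(S)$ this already forces $\Lin_{m-1}(S) = \Lin_m(S)$. Applying Lemma~\ref{lemma:full-stabilization}(2) at the index $m-1 \ge 0$ gives $\Lin_{m-1}(S) = \Lin_m(S) = \Lin_{m+1}(S) = \dots$, so $d_k = 0$ for every $k \ge m$; no flexibility beyond the stabilization lemma is used.

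For part~(2) I would isolate two claims of the form ``slow growth at one step forces no growth at the next'': namely, (KL) if $d_2(S) \le 1$ then $\Lin_3(S) = \Lin_2(S)$, and (KL$'$) if $d_3(S) = 1$ then $\Lin_4(S) = \Lin_3(S)$. Granting these, the proposition follows formally. For $m = 3$: from $d_3 \ge 1$ and the contrapositive of (KL) I get $d_2 \ge 2$; and if $d_1 \le 1$ then $\Lin_1(S)$ is spanned by the constants together with at most one element $a$, whence $\Lin_2(S) \subseteq \Lin_1(S) + \F(aa)$ and $d_2 \le 1$, a contradiction, so $d_1 \ge 2$. For $m = 4$: part~(1) gives $d_3 \ge 1$, so the case $m=3$ already yields $d_1, d_2 \ge 2$; and the contrapositive of (KL$'$) upgrades $d_4 \ge 1$ to $d_3 \ge 2$. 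Throughout I may, by Remark~\ref{remark:equal-linear-spans}, replace $S$ by a basis of $\Lin_1(S)$ without changing any $\Lin_k(S)$.

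To prove (KL) I would write $\Lin_2(S) = \Lin_1(S) + \F w$ with $w \notin \Lin_1(S)$ and introduce the bilinear form $\phi$ on $\Lin_1(S)$ determined by $xy \equiv \phi(x,y)\,w \pmod{\Lin_1(S)}$. Picking $a_0, b_0$ with $\phi(a_0,b_0)=1$, so $a_0 b_0 = w + u_0$ with $u_0 \in \Lin_1(S)$, I first use \eqref{equation:general-flexibility} on the pair $(a_0,b_0)$: since $(a_0 b_0)a_0 \in \Lin_2(S)$, expanding gives $w a_0 \in \Lin_2(S)$, and symmetrically $b_0 w \in \Lin_2(S)$. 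Then \eqref{equation:general-flexibility-1} on $(a_0,b_0,c)$ gives $wc \equiv -(a_0 b_0)c \equiv (c b_0)a_0 \equiv \phi(c,b_0)(w a_0)\equiv 0 \pmod{\Lin_2(S)}$, and \eqref{equation:general-flexibility-2} on $(c,a_0,b_0)$ gives $cw \in \Lin_2(S)$ in the same way. Hence $\Lin_2(S)\cdot S + S\cdot \Lin_2(S) \subseteq \Lin_2(S)$, and Lemma~\ref{lemma:full-stabilization}(1) yields $\Lin_3(S)=\Lin_2(S)$.

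For (KL$'$) I would set up the one-dimensional quotient $\Lin_3(S) = \Lin_2(S) + \F w_3$, so that every length-three word is a scalar multiple of $w_3$ modulo $\Lin_2(S)$, and try to show $w_3 s, s w_3 \in \Lin_3(S)$. Choosing a representative $w_3 \equiv (pq)r$ and writing $v_0 = pq$, one application of \eqref{equation:general-flexibility-1} reduces $w_3 s$ modulo $\Lin_3(S)$ to a product $v_1 v_0$ of two degree-two elements ($v_1 = sr$), and one application of \eqref{equation:general-flexibility-2} rewrites $v_1 v_0$ through a length-three word times $q$, i.e. through a multiple of $q w_3$; iterating produces a finite linear system in the unknowns $\{w_3 s,\, s w_3\}$ modulo $\Lin_3(S)$. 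The hard part will be exactly that, at this level, the flexibility identities feed back products of two degree-two elements (a priori of degree four), so the reduction does not close in one step as it did in (KL); I expect to force closure using the extra containments $(xv)x,\, x(vx) \in \Lin_3(S)$ for $x \in \Lin_1(S)$, $v \in \Lin_2(S)$, which come from \eqref{equation:general-flexibility}, together with the proportionality of all length-three words when $d_3 = 1$. This obstruction is also why the statement is confined to $m \le 4$: for larger $m$ the corresponding system no longer collapses.
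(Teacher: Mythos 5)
Your part (1) is correct: $d_m=0$ forces $\Lin_{m-1}(S)=\Lin_m(S)$, and Lemma~\ref{lemma:full-stabilization}(2) freezes the chain from there on. Your reduction of part (2) to the two claims (KL) and (KL$'$) is also sound, and your proof of (KL) is complete: the bilinear form $\phi$, the containments $wa_0,\,b_0w\in\Lin_2(S)$ from \eqref{equation:general-flexibility}, then $wc,\,cw\in\Lin_2(S)$ from \eqref{equation:general-flexibility-1} and \eqref{equation:general-flexibility-2}, and finally Lemma~\ref{lemma:full-stabilization}(1). (There is a harmless sign slip --- $wc\equiv(a_0b_0)c\equiv-(cb_0)a_0$ --- but since the chain ends at $0$ modulo $\Lin_2(S)$ it does not matter.) Note also that the present paper does not prove this proposition at all: it imports it from the companion preprint (Proposition~2.7 and Lemma~5.15 there), so your argument has to stand on its own.

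The genuine gap is (KL$'$), i.e.\ exactly the $m=4$ half of part (2): you never prove that $d_3=1$ forces $\Lin_4(S)=\Lin_3(S)$, and this is the hard content (it is a separate lemma, Lemma~5.15, in the cited preprint). The mechanism you sketch cannot close on its own: each application of \eqref{equation:general-flexibility-1} or \eqref{equation:general-flexibility-2} turns one unknown coset into a scalar multiple of another, e.g.\ $w_3s\equiv\lambda\,(qw_3)$ and $qw_3\equiv\mu\,(rw_3)$ modulo $\Lin_3(S)$. Iterating such steps only ever proves that the cosets $w_3s,\,sw_3$ in $\A/\Lin_3(S)$ are pairwise proportional, and a family of pairwise proportional nonzero vectors is perfectly consistent; nothing forces any of them to vanish. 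To conclude you would need relations with genuinely zero right-hand side, or a cycle $X\equiv cX$ with $c\neq1$, presumably extracted from \eqref{equation:general-flexibility} applied to $x\in\Lin_1(S)$, $v\in\Lin_2(S)$ together with the already-established $d_1,d_2\geq2$ --- but you only gesture at this (``I expect to force closure\dots'') and do not exhibit such a relation, nor is it routine: controlling those scalars is precisely the bookkeeping that separates $m\leq4$ from larger $m$. As written, the $m=4$ case of part (2) is unproven.
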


\begin{theorem}[{\cite[Theorem~5.18]{our_descendingly-flexible-algebras}}] \label{theorem:descendingly-flexible-length}
Let~$\A$ be descendingly flexible, and $k = l(\A)$. Then 
$$
\dim \A - d_0 \geq
\begin{cases}
k, & 1 \leq k \leq 2,\\
2k - 1, & 3 \leq k \leq 5,\\
3 \cdot 2^{k-4} + k - 3, & 6 \leq k.
\end{cases}
$$
\end{theorem}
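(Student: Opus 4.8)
The plan is to reduce everything to the difference sequence $D(S) = \{d_m\}$ of a well-chosen generating set and to estimate the sum $\dim \A - d_0 = \sum_{m=1}^{\infty} d_m$ from below. Fix a generating set $S$ with $l(S) = l(\A) = k$. By Proposition~\ref{proposition:differences-length} this means $d_k \neq 0$ while $d_m = 0$ for $m > k$, so that $\dim \A - d_0 = \sum_{m=1}^{k} d_m$. Since $d_k \neq 0$, part~(1) of Proposition~\ref{proposition:differences-descendingly-flexible} rules out any vanishing difference below level $k$ (a single zero would propagate upward and kill $d_k$), hence $d_m \geq 1$ for every $1 \leq m \leq k$. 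This already gives $\dim \A - d_0 \geq k$ and settles the range $1 \leq k \leq 2$.

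For the lower middle range I would feed in the reinforcement of part~(2) of Proposition~\ref{proposition:differences-descendingly-flexible}. If $k \geq 3$ then $d_3 \neq 0$ (when $k=3$) or $d_4 \neq 0$ (when $k \geq 4$), so the proposition forces $d_1, d_2 \geq 2$, and once $k \geq 4$ also $d_3 \geq 2$. Combining $d_1 + d_2 \geq 4$ with $d_3 \geq 1$ yields $2k-1 = 5$ for $k=3$, and $d_1 + d_2 + d_3 \geq 6$ with $d_4 \geq 1$ yields $2k-1 = 7$ for $k=4$. The value $k=5$ is precisely the first case not reachable by these soft inequalities alone, so it must be absorbed into the quantitative estimate below.

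The heart of the matter, and where I expect the genuine obstacle to lie, is a growth estimate for the differences themselves. Using the recursive description $\Lin_{m+1}(S) = \Lin_m(S)\cdot S + S\cdot\Lin_m(S) + \Lin_m(S)$ from Lemma~\ref{lemma:full-stabilization}(1), together with the swap identities~\eqref{equation:general-flexibility-1} and~\eqref{equation:general-flexibility-2}, which allow one to interchange the two outermost factors of a length-three product modulo lower-length words, I would aim to establish that the dimension of the freshly generated ``top layer'' grows by a factor approaching $2$ once $m$ passes a small threshold, with an explicit bounded correction at each step. The heuristic is that every element first appearing at level $m$ can be multiplied by generators on the left and on the right to produce elements of level $m+1$, and the descending-flexibility relations only identify the ``diagonal'' contributions; this is the exact analogue of the mechanism underlying Theorem~\ref{theorem:descendingly-alternative-length} for the alternative case. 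The difference is that descending flexibility constrains only $(ab)a$ and $a(ba)$ rather than $(ba)a$ and $a(ab)$, so the doubling is lossier and produces the weaker constant $3\cdot 2^{k-4}$ in place of $2^{k-1}$.

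Finally I would assemble the pieces by induction on $k$, propagating the doubling estimate upward from the reinforced base values $d_1, d_2, d_3 \geq 2$ through $d_4, \dots, d_k$ and telescoping the resulting inequalities into $\sum_{m=1}^{k} d_m \geq 3\cdot 2^{k-4} + k - 3$ for $k \geq 6$, while the same computation truncated below recovers $2k-1$ and in particular closes the outstanding case $k=5$. The delicate point is not the exponential shape but the exact constants: the three regimes of the stated bound meet at $k=5$ and $k=6$, so the correction terms in the growth estimate must be pinned down tightly enough that the induction is sharp at these boundaries rather than merely correct up to a multiplicative factor. Controlling those corrections, and verifying that the swap identities can be iterated across many levels without losing more than a bounded amount of dimension per step, is the part I expect to require the most care.
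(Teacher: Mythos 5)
Your reduction to the difference sequence and your treatment of $1 \leq k \leq 4$ are correct: choosing a generating set $S$ with $l(S) = k$, Proposition~\ref{proposition:differences-length} gives $\dim \A - d_0 = \sum_{m=1}^{k} d_m$ with $d_k \neq 0$, Proposition~\ref{proposition:differences-descendingly-flexible}(1) gives $d_m \geq 1$ for all $1 \leq m \leq k$, and part~(2) gives $d_1, d_2 \geq 2$ when $k = 3$ and $d_1, d_2, d_3 \geq 2$ when $k \geq 4$. But note that these are exactly the auxiliary results the present paper imports from the companion preprint, and the theorem itself is likewise imported (the paper contains no proof of it); everything in your proposal beyond $k = 4$ is a heuristic, not an argument. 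As you acknowledge, the soft inequalities top out at $8$ for $k = 5$, short of $2k - 1 = 9$, and for $k \geq 6$ they give only a linear bound, nowhere near $3 \cdot 2^{k-4} + k - 3$. So the cases $k = 5$ and $k \geq 6$ --- the entire content that distinguishes this theorem from the propositions you quote --- remain unproven.

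Moreover, the doubling mechanism you sketch points in the wrong logical direction. Multiplying elements that first appear at level $m$ by generators on the left and right shows, via Lemma~\ref{lemma:full-stabilization}(1), that $\Lin_{m+1}(S)$ is \emph{spanned} by such products together with $\Lin_m(S)$; this yields an upper bound on $d_{m+1}$, never a lower bound, because nothing forces those products to be linearly independent modulo $\Lin_m(S)$ --- indeed at $m = k$ every such product collapses into $\Lin_k(S)$, since $d_{k+1} = 0$. A valid argument must run contrapositively: one has to show that if the low-level differences are small, then the relations~\eqref{equation:general-flexibility-1} and~\eqref{equation:general-flexibility-2} force $\Lin_{m+1}(S) = \Lin_m(S)$, i.e., stabilization, contradicting $d_k \neq 0$. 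That is precisely the flavor of Proposition~\ref{proposition:differences-descendingly-flexible}(2), and the missing quantitative extension of it (roughly: $d_m \neq 0$ forces $\sum_{j < m} d_j$ to be exponentially large in $m$) is the heart of the theorem. Until such a lemma is proved, with constants sharp enough to meet the stated bound at the boundaries $k = 5$ and $k = 6$ --- the very point you flag as delicate --- the proposal is a plan rather than a proof.
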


\section{Standard composition algebras} \label{section:standard-algebras}

\subsection{Properties of standard composition algebras} \label{subsection:identities}

We refer the reader to~\cite[Chapter~2]{Zhevlakov} for a detailed description of Hurwitz algebras, i.e., unital composition algebras. Such algebras are classified by the following ``Generalized Hurwitz Theorem'' which uses the notion of the Cayley--Dickson construction. For more information on Cayley--Dickson algebras, see~\cite{our_split-algebras, Schafer} and references therein. Throughout this section,~$\A$ denotes a Hurwitz algebra over a field~$\F$ with the unit element~$e$.

\begin{theorem}[{\cite[p.~32, Theorem~1]{Zhevlakov}}] \label{theorem:classification}
Every Hurwitz algebra~$\A$ is isomorphic to one of the following types:
\begin{enumerate}[{\rm (1)}]
    \item The field~$\F$ for $\chrs \F \neq 2$ (otherwise, the norm $n(\cdot)$ is not strictly nondegenerate).
    \item The algebra $K(\mu) = \F + \F \ell_1$ with $\ell_1^2 = \ell_1 + \mu$, $4 \mu + 1 \neq 0$, and the involution $\overline{x + y \ell_1} = (x + y) - y \ell_1$ (if $\chrs \F \neq 2$, then $K(\mu)$ is isomorphic to the algebra $(\F,\alpha)$ which is obtained from~$\F$ by the Cayley--Dickson process with the parameter $\alpha = (4\mu + 1)/4 \neq 0$).
    \item A generalized quaternion algebra $Q(\mu, \beta) = (K(\mu), \beta) = K(\mu) + K(\mu) \ell_2$ which is obtained from $K(\mu)$ by the Cayley--Dickson process with the parameter $\beta \neq 0$.
    \item A generalized octonion algebra $C(\mu, \beta, \gamma) = (Q(\mu, \beta), \gamma) = Q(\mu, \beta) + Q(\mu, \beta) \ell_4$ which is obtained from $Q(\mu, \beta)$ by the Cayley--Dickson process with the parameter $\gamma \neq 0$.
\end{enumerate}
\end{theorem}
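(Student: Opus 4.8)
The argument I would give is the classical Cayley--Dickson doubling construction, which reconstructs $\A$ from a chain of nondegenerate unital subalgebras. First I would normalize: composition gives $n(a) = n(ea) = n(e)\,n(a)$ for all $a$, so $n(e) = 1$ by nondegeneracy, and I set $t(a) = n(a,e)$ and $\bar a = n(a,e)e - a$. Polarizing $n(ab) = n(a)n(b)$ in each argument yields the fundamental identities
\begin{align*}
n(ab, ac) &= n(a)\, n(b,c), & n(ab, cb) &= n(b)\, n(a,c),\\
n(ab, cd) + n(cb, ad) &= n(a,c)\, n(b,d),
\end{align*}
together with $a\bar a = \bar a a = n(a)e$, the quadratic relation $a^2 = t(a)a - n(a)e$, and $\overline{ab} = \bar b\,\bar a$. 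From these one deduces that every composition algebra is alternative, so by Artin's theorem any two elements generate an associative subalgebra; this organizes all later manipulations.

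The heart of the proof is the doubling lemma. Suppose $\B \subsetneq \A$ is a proper unital subalgebra on which the polar form is nondegenerate, and choose $\ell$ in the orthogonal complement $\B^{\perp}$ with $n(\ell) \neq 0$. Since $\ell \perp e$ one gets $t(\ell) = 0$ and $\ell^2 = -n(\ell)e$, the sum $\B + \B\ell$ is direct and orthogonal, and the products obey
\begin{align*}
a\cdot(c\ell) &= (ca)\ell, & (b\ell)\cdot c &= (b\bar c)\ell, & (b\ell)\cdot(d\ell) &= -n(\ell)\,\bar d\,b,
\end{align*}
for $a,b,c,d \in \B$, so that $\B + \B\ell$ is a subalgebra of dimension $2\dim\B$. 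I would verify these formulas by pairing both sides against an arbitrary element of $\A$ under the nondegenerate polar form and invoking the identities above. The restricted norm becomes $n(a + b\ell) = n(a) - n(\ell)\,n(b)$, which is again nondegenerate, and comparing with the multiplication rules identifies $\B + \B\ell$ with the Cayley--Dickson double $(\B,\alpha)$ for $\alpha = -n(\ell)$.

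Next I would set up the base case and run the doubling. If $\chrs \F \neq 2$, then $n(e,e) = 2 \neq 0$, so $\F e$ is nondegenerate and starts the chain. If $\chrs \F = 2$, then $\F e$ is degenerate and cannot be used; instead one finds $\ell_1$ with $\ell_1^2 = \ell_1 + \mu$, spanning the nondegenerate two-dimensional algebra $K(\mu)$ (its Gram matrix has determinant $-(4\mu+1)$, whence the condition $4\mu+1 \neq 0$), and the substitution $\ell_1 \mapsto \ell_1 - 1/2$ identifies $K(\mu)$ with $(\F,(4\mu+1)/4)$ when $\chrs \F \neq 2$. Starting from $\F e$, respectively $K(\mu)$, I double repeatedly: whenever the current nondegenerate subalgebra $\B$ is proper, the decomposition $\A = \B \oplus \B^{\perp}$ has $\B^{\perp}$ nondegenerate, and a nondegenerate quadratic space necessarily represents a nonzero value, so a suitable $\ell$ exists and the chain extends. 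Termination is forced by alternativity: the double $(\B,\alpha)$ is a composition algebra precisely when $\B$ is associative, whereas the octonion algebra $C(\mu,\beta,\gamma)$ reached after three doublings is alternative but not associative. Hence a fourth doubling would embed a non-composition algebra as a subalgebra of the composition algebra $\A$, which is impossible; the chain therefore stabilizes at $\B = \A$ with $\dim\A \in \{1,2,4,8\}$, giving exactly the four listed types and, in particular, the finite-dimensionality of $\A$.

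The main obstacle is twofold. The substantive structural input is the pair of facts that composition algebras are alternative and that the Cayley--Dickson double fails to be a composition algebra as soon as $\B$ is non-associative; the latter is the precise computation showing the cross terms in $n\bigl((a+b\ell)(c+d\ell)\bigr)$ cancel only under associativity, and it is what pins the dimension bound at $8$. The second delicate point is the characteristic two treatment: the chain must be based at the two-dimensional $K(\mu)$ rather than at $\F e$, since the polar form vanishes on $\F e$, and one must check that the doubling formulas and the Pfister-type nondegeneracy of the norm survive verbatim when $2 = 0$. The remaining work --- the polarized identities, the closure and multiplication rules of the double, and the non-associativity of the octonions --- consists of routine but lengthy bilinear-form calculations.
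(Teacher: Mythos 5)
The paper itself gives no proof of this statement: it is the classical Generalized Hurwitz Theorem, quoted with a citation to Zhevlakov--Slin'ko--Shestakov--Shirshov, and your Cayley--Dickson doubling argument (polarized composition identities, alternativity, doubling along nondegenerate unital subalgebras starting from $\F e$, or from $K(\mu)$ in characteristic two, with termination at dimension $8$ because the double of a non-associative algebra is never a composition algebra) is essentially the proof given in that cited source, so your proposal matches the intended argument. The outline is sound; the only slip is the sign of the restricted norm, which by orthogonality and multiplicativity should read $n(a+b\ell) = n(a) + n(\ell)\,n(b) = n(a) - \alpha\, n(b)$ with $\alpha = -n(\ell)$, consistent with your own convention $\ell^2 = -n(\ell)e$.
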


Given $a,b,c \in \A$, we denote their associator by $[a,b,c] = (ab)c - a(bc)$. It is well known that~$\A$ is alternative, that is, $[a,a,b] = [b,a,a] = 0$ for all $a,b \in \A$, cf.~\cite[p.~25, Lemma~1]{Zhevlakov}. Hence~$\A$ is flexible, that is, $[a,b,a] = 0$ for all $a,b \in \A$, see~\cite[p.~35]{Zhevlakov}. 

The {\em trace} of $a \in \A$ is given by $t(a) = n(a,e) \in \F$. Then the involution $a \mapsto \bar{a}$ on~$\A$ is defined as $\bar{a} = t(a) - a$. By~\cite[p.~26, Lemma~2]{Zhevlakov}, it satisfies $a\bar{a} = \bar{a}a = n(a)$, that is, the involution is regular. The main properties we will use in this section are regularity of involution and alternativity of~$\A$.

\begin{proposition}[{\cite[p.~26]{Zhevlakov}}] \label{proposition:quadratic}
For any $a \in \mathcal{A}$ we have $a^2 - t(a)a + n(a) = 0$.
\end{proposition}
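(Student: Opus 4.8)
The plan is to obtain the identity as an immediate consequence of the definition of the involution together with its regularity; no genuine obstacle is expected here, since the statement is essentially a one-line computation.

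First I would recall the two ingredients supplied by the text immediately preceding the statement. The involution on~$\A$ is given by $\bar{a} = t(a) - a$, where $t(a) = n(a,e) \in \F$ and, as is customary in a unital algebra, the scalar $t(a)$ is identified with $t(a)e$. Moreover, by regularity of the involution (cited from~\cite[p.~26, Lemma~2]{Zhevlakov}), we have $a\bar{a} = \bar{a}a = n(a)$, where again the scalar $n(a)$ is read as $n(a)e$.

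Next I would simply expand $a\bar{a}$ using these two facts. Since $e$ is the unit of~$\A$ and $t(a) \in \F$ is a scalar, bilinearity of the multiplication gives
\begin{equation*}
a\bar{a} = a\bigl(t(a)e - a\bigr) = t(a)(ae) - a^2 = t(a)a - a^2.
\end{equation*}
Comparing this with the regularity relation $a\bar{a} = n(a)$ yields $t(a)a - a^2 = n(a)$, and rearranging gives $a^2 - t(a)a + n(a) = 0$, as claimed.

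The computation is elementary, so the only point requiring (minimal) care is the routine identification of the scalars $t(a)$ and $n(a)$ with their multiples $t(a)e$ and $n(a)e$ of the unit; there is no substantial difficulty to overcome. One could equally have started from the relation $\bar{a}a = n(a)$ and carried out the symmetric computation, obtaining the same conclusion, which also reflects that $a$ commutes with $\bar{a}$.
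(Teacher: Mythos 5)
Your computation is correct as written: given $\bar a = t(a)e - a$ and the regularity relation $a\bar a = \bar a a = n(a)e$, expanding $a\bar a$ by bilinearity immediately gives $t(a)a - a^2 = n(a)e$, which is the claim. Note, however, that the paper itself offers no proof of this proposition: it is cited from Zhevlakov et al.\ (p.~26), exactly as the regularity relation is cited from Lemma~2 on the same page in the sentence just before. In the source the logical order is the reverse of yours: since $\bar a = t(a)e - a$, the two identities $a\bar a = n(a)e$ and $a^2 - t(a)a + n(a)e = 0$ are trivially equivalent, and it is the quadratic equation that is proved first, directly from the composition law and strict nondegeneracy of the polar form --- linearizing $n(xy)=n(x)n(y)$ gives $n(xy,zy)=n(x,z)n(y)$ and $n(xy,zt)+n(xt,zy)=n(x,z)n(y,t)$, and substituting $y=x$, $t=e$ yields $n\bigl(a^2 - t(a)a + n(a)e,\, z\bigr)=0$ for all $z$, whence the identity by nondegeneracy; regularity of the involution is then the corollary. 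So your argument is valid relative to the facts in the order the paper lists them (regularity is stated before the proposition), but it is not an independent proof: it merely records the equivalence of the two cited facts, and in a self-contained development one would still have to run the nondegeneracy argument above to justify either of them, so taking your route there would be circular.
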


\begin{proposition}[{\cite[p.~27]{Zhevlakov}}] \label{proposition:scalar-product}
For any $a, b \in \A$ we have $n(a,b) = a\bar{b} + b\bar{a} = \bar{a}b + \bar{b}a \in \F$.
\end{proposition}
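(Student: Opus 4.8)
The plan is to obtain both identities by linearizing the regularity relation from the immediately preceding proposition, namely $a\bar{a} = \bar{a}a = n(a)$, where as usual $n(a)\in\F$ is identified with $n(a)e\in\A$. Linearization is exactly the right tool here because the polar form is by construction bilinear, so replacing a single argument $a$ by a sum $a+b$ and extracting the off-diagonal terms should directly produce the bilinear quantity $n(a,b)$.

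First I would record that the trace $t(a) = n(a,e)$ is a linear functional, since $n(\cdot,\cdot)$ is bilinear; consequently the involution $a \mapsto \bar{a} = t(a) - a$ is $\F$-linear, and in particular $\overline{a+b} = \bar{a} + \bar{b}$. This is the only structural fact I need beyond regularity of the involution and the definition $n(a+b) = n(a) + n(b) + n(a,b)$ of the polar form.

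Next I would substitute $x = a+b$ into $x\bar{x} = n(x)$. Using linearity of the involution, the left-hand side expands as $a\bar{a} + a\bar{b} + b\bar{a} + b\bar{b}$, and by regularity the diagonal terms are $a\bar{a} = n(a)$ and $b\bar{b} = n(b)$. The right-hand side is $n(a+b) = n(a) + n(b) + n(a,b)$. Cancelling $n(a)$ and $n(b)$ on both sides isolates $a\bar{b} + b\bar{a} = n(a,b)$. Repeating the identical computation with the other regularity relation $\bar{x}x = n(x)$, where now the expansion reads $\bar{a}a + \bar{a}b + \bar{b}a + \bar{b}b$, yields in the same way $\bar{a}b + \bar{b}a = n(a,b)$. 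The assertion $n(a,b) \in \F$ is immediate, since the polar form is $\F$-valued by definition.

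I do not expect a genuine obstacle: the argument is a pure linearization and, crucially, never divides by $2$, so it is valid for every field, including $\chrs \F = 2$. The only point demanding minor care is the bookkeeping of the identification of $\F$ with $\F e$, ensuring that all equalities are read as identities in $\A$; but no alternativity or flexibility of $\A$ is invoked, only the linearity of the involution and regularity, so the proof is short and characteristic-free.
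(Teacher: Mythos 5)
Your proof is correct and is exactly the standard argument behind the cited result: the paper itself gives no proof (it cites Zhevlakov, p.~27), and the identity there is obtained by precisely this linearization of the regularity relations $a\bar{a}=\bar{a}a=n(a)$, using only linearity of the involution and the definition of the polar form. Nothing is missing, and your observation that the argument is characteristic-free (no division by $2$) is accurate.
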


We denote by $\A^* = (\A,*)$ a standard composition algebra of type I, II, III, or IV, where
$$
\text{(I) } a*b = ab, \quad \text{(II) } a*b = \bar{a}b, \quad \text{(III) } a*b = a\bar{b}, \quad \text{(IV) } a*b = \bar{a}\bar{b}.
$$
The algebra $\A^*$ of type II is anti-isomorphic to the algebra $\A^*$ of type III under the mapping $a \to \bar{a}$, cf.~\cite[Theorem~3.5, Theorem~4.5]{Beites}. Hence their lengths coincide, so it is sufficient for us to consider algebras of types I, II and IV.

\begin{lemma} \label{lemma:two-product-simplification}
Let $a,b \in \A^*$. Then $a*b + b*a \in \Lin(e,a,b)$.
\end{lemma}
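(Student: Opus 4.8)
The plan is to verify the claim $a*b + b*a \in \Lin(e,a,b)$ separately for each of the three representative types I, II, and IV (recalling that type III is anti-isomorphic to type II, and hence need not be treated). For each type I rewrite $a*b + b*a$ in terms of the underlying Hurwitz multiplication and the involution, and then reduce everything to the spanning set $\{e, a, b\}$ using the two structural facts highlighted just before the statement: the regularity of the involution together with Proposition~\ref{proposition:scalar-product}, which gives $n(a,b) = a\bar b + b \bar a = \bar a b + \bar b a \in \F$, and Proposition~\ref{proposition:quadratic}, which lets me eliminate squares via $a^2 = t(a)a - n(a)e$.

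For type I the computation is immediate: $a*b + b*a = ab + ba$, and since $\bar b = t(b)e - b$ and $\bar a = t(a)e - a$, I can write $ab + ba$ in a form that lets me apply Proposition~\ref{proposition:scalar-product}. Concretely, substituting $b = t(b)e - \bar b$ gives $ab = t(b)a - a\bar b$, and symmetrically $ba = t(a)b - b\bar a$, so that
\begin{equation*}
a*b + b*a = t(b)a + t(a)b - (a\bar b + b\bar a) = t(b)a + t(a)b - n(a,b)e \in \Lin(e,a,b),
\end{equation*}
using that $n(a,b) \in \F$. For type IV, where $a*b = \bar a\bar b$, I have $a*b + b*a = \bar a \bar b + \bar b \bar a$, which is exactly the type~I expression with $a,b$ replaced by $\bar a, \bar b$; since $\bar a, \bar b \in \Lin(e,a,b)$, the same argument closes this case.

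The genuinely different case is type II, where $a*b + b*a = \bar a b + \bar b a$. Here I can apply the \emph{other} equality in Proposition~\ref{proposition:scalar-product} directly: $\bar a b + \bar b a = n(a,b) \in \F \subseteq \Lin(e,a,b)$, which in fact shows $a*b + b*a$ is a scalar multiple of $e$ in this case. I expect the \textbf{main obstacle} to be purely bookkeeping: making sure I invoke the correct one of the two forms of $n(a,b)$ in each case (the form $a\bar b + b\bar a$ for types I and IV, versus $\bar a b + \bar b a$ for type II), and being careful in the non-unital types that I never introduce a stray $e$ that cannot be absorbed — but since the claim explicitly allows $e$ in the span, this is not an issue, and every case reduces cleanly by the two propositions.
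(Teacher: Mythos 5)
Your proposal is correct and follows essentially the same route as the paper: a case analysis over types I, II, IV, eliminating products via Proposition~\ref{proposition:scalar-product} and the formula $\bar{a} = t(a)e - a$. The only (valid) cosmetic difference is in type IV, where you reduce $\bar a\bar b + \bar b\bar a$ to the type~I identity applied to $\bar a,\bar b$ instead of expanding directly as the paper does; both yield the same conclusion.
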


\begin{proof}
Consider three cases, depending on the type of $\A^*$.
\begin{itemize}
\item[\rm I:]
By Proposition~\ref{proposition:scalar-product}, $n(a,b) = a\bar{b} + b\bar{a} = a(t(b) - b) + b(t(a) - a) = t(b)a + t(a)b - (ab + ba)$. Hence $ab + ba = t(b)a + t(a)b - n(a,b) \in \Lin(e,a,b)$.

\item[\rm II:]
Proposition~\ref{proposition:scalar-product} implies that $a*b + b*a = \bar{a}b + \bar{b}a = n(a,b) \in \Lin(e)$.

\item[\rm IV:]
We have
\begin{align*}
a*b + b*a &= \bar{a}\bar{b} + \bar{b}\bar{a} = (t(a) - a)\bar{b} + (t(b) - b)\bar{a} \\
&= t(a)\bar{b} + t(b)\bar{a} - (a\bar{b} + b\bar{a}) \\
&= t(a)(t(b) - b) + t(b)(t(a) - a) - n(a,b) \\
&= (2t(a)t(b) - n(a,b)) - t(a)b - t(b)a. \qedhere
\end{align*}
\end{itemize}
\end{proof}

Combined with Proposition~\ref{proposition:sufficient-condition}, the following lemma shows that $\A^*$ satisfies both the conditions of descending flexibility and descending alternativity.

\begin{lemma} \label{lemma:standard-flexibility}
Let $a, b \in \A^*$. Then $(a*b)*a, \: a*(b*a), \: (b*a)*a, \: a*(a*b) \in \Lin(a,b,a*a,a*b,b*a)$, and the coefficient at $a*a$ depends only on $b$.
\end{lemma}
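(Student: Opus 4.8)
The plan is to prove the four containments and the coefficient claim by direct computation in each of the three relevant types (I, II, IV), using only the regularity of the involution ($a\bar a = \bar a a = n(a)$), the formula $\bar a = t(a) - a$, flexibility and alternativity of the Hurwitz algebra $\A$, and Proposition~\ref{proposition:scalar-product}. The underlying strategy is that each starred product expands into ordinary products of $a$, $b$, and their conjugates, and every ordinary triple product of the form $(xy)x$, $x(yx)$, $(yx)x$, $x(xy)$ with $x,y \in \{a,b,\bar a, \bar b\}$ can be reduced to a linear combination of $e,a,b,aa,ab,ba$ using the alternative and flexible laws together with $\bar a = t(a)-a$. So the heart of the matter is a finite bookkeeping exercise in the Hurwitz algebra.

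First I would record the conversions $a*a$, $a*b$, $b*a$ back into ordinary products so that the target span $\Lin(a,b,a*a,a*b,b*a)$ can be compared with spans of ordinary products. For type~I this is trivial. For type~II, $a*a = \bar a a = n(a)$, which is a scalar, so $\Lin(a,b,a*a,a*b,b*a) = \Lin(e,a,b,\bar a b, \bar b a)$; for type~IV, $a*a = \bar a \bar a = \overline{a^2}$, and by Proposition~\ref{proposition:quadratic} this equals $t(a)\bar a - n(a) = t(a)(t(a)-a) - n(a)$, again reducible. I expect the type~II span to collapse because $a*a$ is scalar, which will actually make that case the most delicate one to handle, since fewer independent vectors are available on the right-hand side.

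Next, for each type and each of the four products I would expand using $\bar x = t(x)-x$, push all conjugations inward, and apply the alternative laws $[x,x,y]=[y,x,x]=0$ and flexibility $[x,y,x]=0$ to collapse the resulting degree-three ordinary products. For instance, in type~I the product $(a*b)*a = (ab)a = a(ba)$ by flexibility, and one then needs $a(ba) \in \Lin(a,b,aa,ab,ba)$, which follows from linearizing the alternative identities; the key point is that any associator involving a repeated letter vanishes, so only words in which $a$ and $b$ each appear can survive, and these lie in the target span. In types~II and~IV the conjugations produce extra terms like $t(a)$ and $t(b)$ multiplying lower-degree words, all of which land in $\Lin(e,a,b)$ or reduce via Lemma~\ref{lemma:two-product-simplification}; I would lean on that lemma to handle the symmetric combinations $xy + yx$ whenever they appear.

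The main obstacle I anticipate is verifying the coefficient condition, namely that the coefficient at $a*a$ in each expansion depends only on $b$ and not on $a$. This is the nontrivial structural requirement demanded by Proposition~\ref{proposition:sufficient-condition}, and it is not automatic from mere membership in the span. The danger is that, after reducing $aa$ or $\overline{aa}$ via the quadratic relation $a^2 = t(a)a - n(a)$, one reintroduces the vector $aa$ with an $a$-dependent coefficient, or conversely that the natural expansion carries a coefficient like $n(a)$ or $t(a)$ in front of $a*a$. I would therefore track the $a*a$-coefficient explicitly throughout each computation, being careful never to expand $a*a$ itself (so as to keep it as a basis vector of the target span), and check that whatever multiplies it is a function of $b$ alone. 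For the flexible types~I and~IV the symmetric roles of the two factors under the involution should make this transparent, whereas for the non-flexible type~II the asymmetry between left and right conjugation is where I would concentrate the verification.
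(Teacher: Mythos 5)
Your proposal is correct and takes essentially the same route as the paper: a direct case-by-case computation in types I, II and IV (type III being covered by the anti-isomorphism with type II), using regularity of the involution, Proposition~\ref{proposition:quadratic}, Proposition~\ref{proposition:scalar-product}, alternativity/flexibility, and Lemma~\ref{lemma:two-product-simplification}, while keeping $a*a$ as an atomic basis vector and verifying that its coefficient is a function of $b$ alone. The only slip is your incidental claim that in type II one has $\Lin(a,b,a*a,a*b,b*a) = \Lin(e,a,b,\bar{a}b,\bar{b}a)$ --- this fails when $n(a)=0$, since then $a*a=\bar{a}a=n(a)e=0$ --- but your stated strategy of absorbing every scalar term as a $b$-dependent multiple of $a*a$ rather than of $e$ sidesteps this, exactly as in the paper's computation, where for instance $(a*b)*a = t(a)\,b*a + n(a)b - t(b)\,a*a$.
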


\begin{proof}
We consider three cases, depending on the type of $\A^*$.
\begin{itemize}
\item[\rm I:]
$
\begin{aligned}[t]
(ab)a &= (n(a,\bar{b}) - \bar{b}\bar{a})a = n(a,\bar{b})a - (t(b) - b)(\bar{a}a) \\
&= n(a,\bar{b})a - t(b)(t(a) - a)a + n(a)b \\
&= (n(a,\bar{b}) - t(a)t(b)) a + t(b)aa + n(a)b;\\
(ba)a &= b(aa) = b(t(a)a - n(a)) = t(a) ba - n(a)b;\\
a(ab) &= (aa)b = (t(a)a - n(a))b = t(a) ab - n(a)b.
\end{aligned}
$

Moreover, by flexibility, $a(ba) = (ab)a$.

\item[\rm II:]
$
\begin{aligned}[t]
(a*b)*a &= (\bar{a}b)*a = \overline{(\bar{a}b)}a = (\bar{b}a)a \\
&= (\bar{b}a)(t(a) - \bar{a}) = t(a)\bar{b}a - \bar{b}(a\bar{a}) \\
&= t(a)\bar{b}a - n(a)(t(b) - b) \\
&= t(a)b*a + n(a)b - t(b)a*a;\\
a*(b*a) &= \bar{a}(\bar{b}a) = (t(a) - a)(\bar{b}a) \\
&= t(a)\bar{b}a - a(n(a,b) - \bar{a}b) \\
&= t(a)\bar{b}a - n(a,b)a + (a\bar{a})b \\
&= t(a) b*a - n(a,b)a + n(a)b;
\end{aligned}
$\\
$
\begin{aligned}[t]
(b*a)*a &= (n(a,b)e - a*b)*a \\
&= n(a,b)e*a - (a*b)*a \\
&= n(a,b)a - t(a)b*a - n(a)b + t(b)a*a;\\
a*(a*b) &= \bar{a}(\bar{a}b) = (\bar{a}\bar{a})b \\
&= (t(\bar{a})\bar{a} - n(\bar{a})) b = (t(a)\bar{a} - n(a)) b \\
&= t(a) \bar{a}b - n(a)b = t(a) a*b - n(a)b.
\end{aligned}
$

\item[\rm IV:]
$
\begin{aligned}[t]
    (a*b)*a &= (\bar{a}\bar{b})*a = \overline{(\bar{a}\bar{b})}\bar{a} = (ba)\bar{a} = b(a\bar{a}) = n(a)b;\\
    a*(b*a) &= a*(\bar{b}\bar{a}) = \bar{a}\overline{(\bar{b}\bar{a})} = \bar{a}(ab) = (\bar{a}a)b = n(a)b;\\
    (b*a)*a &= ((2t(a)t(b) - n(a,b))e - t(a)b - t(b)a - a*b)*a \\
    &= (2t(a)t(b) - n(a,b)) e*a - t(a)b*a - t(b)a*a - (a*b)*a \\
    &= (2t(a)t(b) - n(a,b)) (t(a) - a) - t(a)b*a - t(b)a*a - n(a) b \\
    &= t(a) ((2t(a)t(b) - n(a,b)) - b*a) \\
    &+ (n(a,b) - 2t(a)t(b))a - t(b)a*a - n(a) b \\
    &= t(a) (a*b + t(a)b + t(b) a) + (n(a,b) - 2t(a)t(b))a - t(b)a*a - n(a) b \\
    &= t(a) a*b + ((t(a))^2 - n(a)) b + (n(a,b) - t(a)t(b)) a - t(b)a*a;\\
    a*(a*b) &= t(a) b*a + ((t(a))^2 - n(a)) b + (n(a,b) - t(a)t(b)) a - t(b)a*a.
\end{aligned}
$
\end{itemize}
\end{proof}

If $\chrs \F \neq 2$, then any Hurwitz algebra~$\A$ over~$\F$ is isomorphic to a Cayley--Dickson algebra $\A_n$ of dimension $2^n$, where $0 \le n \le 3$, and vice versa. Hence it follows from Lemma~\ref{lemma:standard-flexibility} that any Cayley--Dickson algebra $\A_n$, $0 \leq n \leq 3$, over an arbitrary field $\F$, $\chrs \F \neq 2$, is both descendingly flexible and descendingly alternative, see Definition~\ref{definition:descending}. We now show that this statement does not hold for higher-dimensional Cayley--Dickson algebras.

\begin{proposition} \label{proposition:Cayley-Dickson-flexibility}
If $n \geq 4$, then a Cayley--Dickson algebra $\A_n$ over a field~$\F$ with $\chrs \F \neq 2$ is neither descendingly flexible nor descendingly alternative.
\end{proposition}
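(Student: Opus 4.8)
The plan is to produce an explicit counterexample rather than to argue abstractly, and to reduce everything to the sedenion algebra $\A_4$. The first observation is that it suffices to exhibit three elements $a,b,c$ violating one of the linearized conditions inside a single copy of $\A_4$. Indeed, the Cayley--Dickson process realizes $\A_{n-1}$ as a subalgebra of $\A_n$, so by iteration $\A_4$ embeds as a subalgebra of $\A_n$ for every $n\geq 4$. Moreover, $\Lin'_2(a,b,c)$ is the linear span of finitely many words of degree at most two in $a,b,c$, all of which lie in the subalgebra generated by $a,b,c$; hence both $\Lin'_2(a,b,c)$ and the test elements $(ab)c+(cb)a$, $a(bc)+b(ac)$, etc.\ stay inside the copy of $\A_4$, and membership is unaffected by whether one computes in $\A_4$ or in $\A_n$. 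Since $\chrs\F\neq 2$, the excerpt already notes that it is enough to violate the three-variable conditions~\eqref{equation:general-flexibility-1},~\eqref{equation:general-flexibility-2} (resp.~\eqref{equation:general-alternativity-1},~\eqref{equation:general-alternativity-2}), so I will only test those.

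Next I would fix the standard orthonormal basis $e_0,e_1,\dots,e_{15}$ of $\A_4$, where $e_0$ is the unit, $e_i^2=-e_0$, and distinct imaginary units anticommute, $e_ie_j=-e_je_i$ for $i\neq j$ with $i,j\geq 1$; moreover each such product is again $\pm$ a basis unit. I would then take $a,b,c$ to be three distinct imaginary basis units. For such a choice $aa=-e_0\in\Lin(e_0)$, and every mixed product $ab,ba,bc,\dots$ is $\pm$ a basis unit, so that
$$
\Lin'_2(a,b,c)=\Lin(e_0,a,b,c,ab,ac,bc)
$$
is the span of at most seven basis units. The violation then reduces to showing that a sum such as $(ab)c+(cb)a$ or $a(bc)+b(ac)$ equals $\pm 2$ times a basis unit $e_m$ that is \emph{not} among these seven. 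This is precisely where the non-alternativity of $\A_4$ enters: in an alternative algebra the associator is alternating and these sums would collapse back into the low-degree span.

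Concretely, writing $\A_4$ as the Cayley--Dickson double of the octonion algebra $\A_3$ via $(p,q)(r,s)=(pr-\bar{s}q,\,sp+q\bar{r})$ and identifying $e_i=(e_i,0)$, $e_{8+i}=(0,e_i)$, I expect the choice $a=e_1$, $b=e_2$, $c=e_{13}$ to work. A direct computation with the octonion table gives $(ab)c=-e_{14}$, $(cb)a=-e_{14}$, $a(bc)=-e_{14}$, and $b(ac)=-e_{14}$, while $\Lin'_2(e_1,e_2,e_{13})=\Lin(e_0,e_1,e_2,e_3,e_{12},e_{13},e_{15})$ does not contain $e_{14}$. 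Hence $(ab)c+(cb)a=-2e_{14}\notin\Lin'_2(a,b,c)$ violates descending flexibility, and $a(bc)+b(ac)=-2e_{14}\notin\Lin'_2(a,b,c)$ violates descending alternativity; here $-2\neq 0$ since $\chrs\F\neq 2$. For general $n\geq 4$ the same three elements, viewed inside the canonical copy of $\A_4$ in $\A_n$, give the violation.

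The main obstacle is purely one of bookkeeping: fixing one consistent sign convention for the octonion multiplication and for the doubling, and then verifying the handful of triple products and the seven spanning units without sign errors. Once a multiplication table is pinned down this is a finite, mechanical check; the only conceptual point is the reduction in the first paragraph, which guarantees that a single sedenion counterexample settles all $n\geq 4$ at once.
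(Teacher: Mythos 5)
Your strategy---one explicit counterexample inside a copy of $\A_4$, tested against the linearized conditions---is legitimate in principle, and your reduction of $\A_n$ to $\A_4$ is correct. But the proof has two genuine gaps. The first is one of generality: in this paper $\A_n$ denotes a Cayley--Dickson algebra with an \emph{arbitrary} sequence of nonzero parameters $\gamma_0,\dots,\gamma_{n-1}$ (the paper's proof begins by fixing these symbolically), and the copy of $\A_4$ inside $\A_n$ inherits the arbitrary parameters $\gamma_0,\dots,\gamma_3$. Your computation, however, is carried out only in the classical sedenions, where $e_i^2=-e_0$ and all structure constants are $\pm1$; a split sedenion algebra, or the one with parameters $(2,3,5,7)$ over $\mathbb{Q}$, contains no copy of that algebra, so your argument says nothing about it. With general parameters the structure constants are $\pm$ monomials in the $\gamma_t$, and one must rule out cancellation between the two terms of each tested sum for \emph{every} admissible choice of parameters. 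The paper does this by keeping the $\gamma_t$ symbolic (its offending coefficient is $-2\gamma_0\gamma_1\gamma_3$); your proof would need either the same, or an additional argument that the signs of the structure constants do not depend on the parameters.

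The second gap is an outright error in the descending-alternativity half, and it is forced, not a sign-convention issue. With your doubling formula, for $a=(x,0)$, $b=(y,0)$, $c=(0,z)$ one gets $bc=(0,zy)$, $ac=(0,zx)$, hence $a(bc)=(0,(zy)x)$ and $b(ac)=(0,(zx)y)$. Since the octonions are alternative, the linearized right-alternative law gives $(zy)x+(zx)y=z(yx+xy)$, which vanishes because $x=e_1$ and $y=e_2$ anticommute. So for your triple $a(bc)+b(ac)=0\in\Lin'_2(a,b,c)$: condition~\eqref{equation:general-alternativity-2} is \emph{not} violated, and your claim that $a(bc)$ and $b(ac)$ are both equal to $-e_{14}$ contradicts the alternativity of $\A_3$, whichever valid octonion table is used. (The flexibility half does survive: $(ab)c+(cb)a=(0,z(xy)+(zy)x)=(0,[z,y,x])$, and $[e_5,e_2,e_1]=\pm2e_6$, so the sum is $\pm2e_{14}\notin\Lin'_2(a,b,c)$, violating~\eqref{equation:general-flexibility-1}.) The repair is small: the same triple does violate~\eqref{equation:general-alternativity-1}, since $(ac)b=(0,-(zx)y)$ and hence $(ab)c+(ac)b=(0,[z,y,x])\neq0$ as well. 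Alternatively, argue as the paper does: take $a=e_1+e_{10}$, $b=e_3+e_{15}$ and show that $(ab)a=a(ba)=-(ba)a=-a(ab)$ has a nonzero coefficient $-2\gamma_0\gamma_1\gamma_3$ at $e_4$, which violates the two-variable conditions~\eqref{equation:general-flexibility} and~\eqref{equation:general-alternativity} simultaneously, with general parameters, in one computation.
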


\begin{proof}
Consider the standard basis $\{ e {\:=\:} e_0, e_1, \dots, e_{2^n-1} \}$ of $\A_n$, and let $\{ \gamma_0, \gamma_1, \dots, \gamma_{n-1} \}$ be the sequence of (nonzero) Cayley--Dickson parameters of $\A_n$. We set $a = e_1 + e_{10}$ and $b = e_3 + e_{15}$. Then
\begin{align*}
    aa &{}= (\gamma_0 - \gamma_1 \gamma_3) e_0;\\
    ab = -ba &{}= \gamma_0 e_2 + \gamma_1 \gamma_3 e_5 + \gamma_1 e_9 + \gamma_0 e_{14}\\
    (ab)a = a(ba) = -(ba)a = -a(ab) &{}= (\gamma_1 \gamma_3 - \gamma_0)e_3 - 2 \gamma_0 \gamma_1 \gamma_3 e_4 + (\gamma_1 \gamma_3 - \gamma_0)e_{15}.
\end{align*}
Since $\chrs \F \neq 2$ and $\gamma_j \neq 0$ for all $j \in \{0, \dots, 2^n-1 \}$, the coefficient at $e_4$ in $(ab)a$ is nonzero. Hence $(ab)a = a(ba) = -(ba)a = -a(ab) \notin \Lin(e,a,b,aa,ab,ba)$, so $\A_n$ is neither descendingly flexible nor descendingly alternative.
\end{proof}

\subsection{Lengths of standard composition algebras} \label{subsection:standard-algebras}

If $\A^*$ is of type I or $\dim \A = 1$, then $\A^* = \A$ is unital, and in this case $\Lin_0(S) = \F$ for any subset $S \subseteq \A^*$. Hence, if $\A^*$ is of type II, III or IV, it is sufficient to consider only $\dim \A \geq 2$. Then $\A^*$ becomes non-unital, so $\Lin_0(S) = \{ 0 \}$ for any $S$.

\begin{proposition} \label{proposition:standard-length}
\leavevmode
\begin{enumerate}[{\rm (1)}]
    \item If $\A^*$ is of type I, then $l(\A^*) \ge \log_2 (\dim \A^*)$.
    \item If $\F = \F_2$, $\A = K(0)$, and $\A^*$ is of type II or III, then $l(\A^*) = 1$.
    \item If $\F = \F_2$, $\A = K(1)$, and $\A^*$ is of type IV, then $l(\A^*) = 1$.
    \item Otherwise, $l(\A^*) \ge \max \{ 2, \, \log_2 (\dim \A^*) \}$. Clearly, this value differs from $\log_2 (\dim \A^*)$ for $\dim \A^* = 2$ only.
\end{enumerate}
\end{proposition}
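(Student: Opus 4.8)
The plan is to reduce to types~I, II, and~IV (type~III is anti-isomorphic to type~II and so has the same length), to realize every $\A^*$ on the underlying Cayley--Dickson algebra $\A_n$ with $\dim \A = 2^n$ from Theorem~\ref{theorem:classification}, and to separate two regimes: the generic bound $l(\A^*) \ge \log_2(\dim \A^*) = n$, which I would obtain from a single explicit ``slow'' generating set, and the extra bound $l \ge 2$ together with the two exceptional equalities $l = 1$, which only concern the two-dimensional case $\A = K(\mu)$ and can be settled by direct computation.

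For the $\log_2$ bound I would work with the increasing filtration $F_k = \Lin\{\, \ell_T : |T| \le k \,\}$ of $\A_n$, where $\{ \ell_T : T \subseteq \{1, \dots, n\} \}$ is the standard monomial basis with $\ell_{\{j\}} = \ell_{2^{j-1}}$, so that $F_0 = \F$ and $F_n = \A$. The central observation is that this filtration is multiplicative, $F_i \cdot F_j \subseteq F_{i+j}$, for each of the four products: the leading term of $\ell_S \ell_T$ has index set $S \mathbin{\triangle} T$ of size at most $|S| + |T|$, while the reductions coming from $\ell_j^2 = \ell_j + \mu$ (and from $a \mapsto \bar a = t(a) - a$ in types~II and~IV) add only terms of strictly smaller support. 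Taking $S_0 = \{ \ell_1, \ell_2, \ell_4, \dots, \ell_{2^{n-1}} \} \subseteq F_1$, every word of length $k$ in $S_0$ lies in $F_1^{\,k} \subseteq F_k$, so $\Lin_k(S_0) \subseteq F_k$. Since the leading term of each product is a nonzero structure constant times $\ell_{S \triangle T}$, an induction on $|T|$ shows that $S_0$ generates $\A^*$; but $\Lin_{n-1}(S_0) \subseteq F_{n-1} \subsetneq F_n = \A^*$, the top basis vector $\ell_{\{1, \dots, n\}}$ being absent. Hence $l(S_0) \ge n$, which gives part~(1) and, since $\max\{2, \log_2 \dim\} = \log_2 \dim$ for $\dim \ge 4$, also the bound in part~(4) whenever $\dim \A \ge 4$.

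It remains to treat $\dim \A = 2$, i.e.\ $\A = K(\mu)$, where $\max\{2, \log_2 \dim\} = 2$. Here I would look for a single generator. For type~II (hence~III) one has $a * a = n(a)e$, so $\{ a \}$ generates exactly when $a \notin \F e$ and $n(a) \neq 0$; for type~IV, Proposition~\ref{proposition:quadratic} gives $a * a = (t(a)^2 - n(a))e - t(a) a$, so $\{ a \}$ generates exactly when some non-scalar $a$ satisfies $t(a)^2 - n(a) \neq 0$. Writing $a = x + y\ell_1$ and using $n(a) = x^2 + xy - \mu y^2$ from the explicit form of $K(\mu)$, a short computation shows that these conditions fail for all non-scalar $a$ precisely in the two cases $\F = \F_2,\ \A = K(0)$ (types II/III) and $\F = \F_2,\ \A = K(1)$ (type~IV). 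In every other case a single generator exists and $l(\A^*) \ge 2$, completing part~(4); in the two exceptional cases no single element generates, so by Remark~\ref{remark:equal-linear-spans} every generating set has rank $2 = \dim \A$, whence $\Lin_1(S) = \A^*$ and $l(\A^*) = 1$, giving parts~(2) and~(3).

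The main obstacle I anticipate is establishing the multiplicativity $F_i \cdot F_j \subseteq F_{i+j}$ and the nonvanishing of the leading structure constants uniformly in the characteristic: when $\chrs \F = 2$ the relation $\ell_j^2 = \ell_j + \mu$ is no longer homogeneous, so the clean $\Z$-graded argument available for $\chrs \F \ne 2$ must be replaced by this support filtration, and one has to verify that the involution and the three non-standard products never raise the support size. Once this bookkeeping is in place, the two-dimensional case analysis is routine given the explicit multiplication of $K(\mu)$.
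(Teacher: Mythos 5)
Your proposal is correct and follows essentially the same route as the paper: the same ``slow'' generating set $\{\ell_{2^k} : 0 \le k \le n-1\}$ for the $\log_2$ lower bound (the paper asserts its generation and length as easily seen, while you justify them via the support filtration $F_k$), and the same single-generator analysis in $K(\mu)$, with the same polynomials $x^2+x-\mu$ and $3x^2+3x+1+\mu$ (the latter requiring $4\mu+1\neq 0$ to rule out characteristic $3$) isolating the exceptional cases $K(0)$ and $K(1)$ over $\F_2$. One minor caveat: your blanket claim that \emph{every} product $\ell_S * \ell_T$ has a nonzero leading coefficient at $\ell_{S \triangle T}$ is false (e.g.\ $\ell_1 * \ell_1 = n(\ell_1)e = -\mu e$ vanishes in type~II when $\mu = 0$), but this does not affect the argument, since your generation induction only uses products with disjoint supports, together with $\ell_2 * \ell_2 = \pm\beta e$ to reach $e$ in the non-unital types, and there the claim does hold.
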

	
\begin{proof}
We use here the classification of Hurwitz algebras from Theorem~\ref{theorem:classification}. Let $\dim \A = 2^n$. If $\A^*$ is of type I, then we set $S = \{ \ell_{2^k} \: \vert \: 0 \leq k \leq n - 1 \}$. It can be easily seen that $S$ is generating for~$\A^*$ and $l(S) = n = \log_2 (\dim \A^*)$.

If $\A^*$ is of type II or IV and $2 \leq n \leq 3$, then $S$ as above is again generating for~$\A^*$, $e \in \Lin(\ell_2, \ell_2*\ell_2)$ is a word of length $2$ in $S$, and $l(S) = \log_2 (\dim \A^*)$.

Assume that $\A^*$ is of type II or IV with $n = 1$. Then $\A = K(\mu)$, i.e., $\A = \F + \F \ell_1$ with $\ell_1^2 = \ell_1 + \mu$. Let $S$ be an arbitrary generating system of $\A^*$. By Remark~\ref{remark:equal-linear-spans}, we may assume that $S$ is linearly independent. If $|S| = 2$, then we have $\Lin_1(S) = \A^*$, so $l(S) \leq 1$. Assume now that $|S| = 1$. Then $S = \{ a \}$ for some $a = x + \ell_1$, $x \in \F$. We have 
\begin{align*}
    a^2 &{}= (x + \ell_1)^2 = x^2 + 2 x \ell_1 + \ell_1^2 \\
    &{}= x^2 + \mu + (2 x + 1) \ell_1 = \mu - x^2 - x + (2 x + 1) a,\\
    \bar{a} &{}= (x + 1) - \ell_1 = 2 x + 1 - a.
\end{align*}
\begin{itemize}
    \item If $\A^*$ is of type II, then 
    $$
    a*a = \bar{a}a = (2 x + 1 - a)a = (2 x + 1)a - a^2 = x^2 + x - \mu.
    $$
    We have $x^2 + x - \mu = 0$ for all $x \in \F$ if and only if $\mu = 0$ and $\F = \F_2$. In this case $\Lin_2(S) = \Lin(S)$ for any $S \subseteq \A^*$ such that $|S| = 1$. Hence any generating system~$S$ for~$\A^*$ must contain at least two linearly independent elements, and thus $l(\A^*) = 1$.
    
    Otherwise, there is some $x \in \F$ such that $x^2 + x - \mu \neq 0$. Then $e \in \Lin_2(S)$, so $S = \{ x + \ell_1 \}$ is generating for~$\A$, and $l(S) = 2$.
    
    \item If $\A^*$ is of type IV, then 
    \begin{align*}
    a*a &{}= \bar{a}\bar{a} = (2 x + 1 - a)(2 x + 1 - a) = (2 x + 1)^2 - 2(2 x + 1)a + a^2 \\
    &{}= (2 x + 1)^2 + \mu - x^2 - x - (2 x + 1) a = 3x^2 + 3x + 1 + \mu - (2 x + 1) a.
    \end{align*}    
    Under the condition that $4 \mu + 1 \neq 0$, we have $3x^2 + 3x + 1 + \mu = 0$ for all $x \in \F$ if and only if $\mu = 1$ and $\F = \F_2$. In this case $\Lin_2(S) = \Lin(S)$ for any $S \subseteq \A^*$ such that $|S| = 1$. Hence any generating system $S$ for $\A^*$ must contain at least two linearly independent elements, and thus $l(\A^*) = 1$.
    
    Otherwise, there is some $x \in \F$ such that $3x^2 + 3x + 1 + \mu \neq 0$. Then $e \in \Lin_2(S)$, so $S = \{ x + \ell_1 \}$ is generating for~$\A$, and $l(S) = 2$. \qedhere
\end{itemize}
\end{proof}

\begin{theorem} \label{theorem:standard-length}
\leavevmode
\begin{enumerate}[{\rm (1)}]
    \item If $\A^*$ is of type I, then $l(\A^*) = \log_2 (\dim \A^*)$.
    \item If $\F = \F_2$, $\A = K(0)$, and $\A^*$ is of type II or III, then $l(\A^*) = 1$.
    \item If $\F = \F_2$, $\A = K(1)$, and $\A^*$ is of type IV, then $l(\A^*) = 1$.
    \item Otherwise, $l(\A^*) = \max \{ 2, \, \log_2 (\dim \A^*) \}$.
\end{enumerate}
\end{theorem}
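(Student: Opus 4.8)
The plan is to obtain the theorem by pairing the lower bounds already furnished by Proposition~\ref{proposition:standard-length} with matching upper bounds extracted from descending alternativity. Cases~(2) and~(3) are settled outright by Proposition~\ref{proposition:standard-length} (there both bounds coincide at $1$), so the only work left is to supply the upper bounds in cases~(1) and~(4), where the Proposition already gives $l(\A^*) \ge \log_2(\dim \A^*)$ and $l(\A^*) \ge \max\{2, \log_2(\dim\A^*)\}$ respectively. Since types~II and~III are anti-isomorphic, it suffices throughout to treat types~I, II, and~IV.

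First I would invoke Lemma~\ref{lemma:standard-flexibility} together with Proposition~\ref{proposition:sufficient-condition}(2) to conclude that $\A^*$ is descendingly alternative, which makes Theorem~\ref{theorem:descendingly-alternative-length} available. Writing $\dim\A^* = 2^n$ and $k = l(\A^*)$, I would record that $d_0 = 1$ for the unital type~I and $d_0 = 0$ for the non-unital algebras of case~(4). The engine of the argument is then the single inequality $\dim \A^* - d_0 \ge 2^{k-1} + k - 2$, valid whenever $k \ge 2$, from which the desired upper bound on $k$ will fall out by elementary estimation.

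For type~I, if $k \ge 2$ then $2^n - 1 \ge 2^{k-1} + (k-2) \ge 2^{k-1}$, whence $2^n \ge 2^{k-1} + 1 > 2^{k-1}$ and therefore $k \le n$; the alternative $k \le 1$ (together with the degenerate case $n = 0$, where $\A^* = \F$ forces $l(\A^*) = 0$) gives $k \le n$ trivially. This yields $l(\A^*) \le n = \log_2(\dim\A^*)$, matching the lower bound. For case~(4), where $d_0 = 0$, the estimate sharpens: if $k \ge 3$ then $2^n \ge 2^{k-1} + (k-2) \ge 2^{k-1} + 1 > 2^{k-1}$, again forcing $k \le n$, while $k \le 2$ gives $k \le \max\{2,n\}$ directly. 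In either subcase $l(\A^*) \le \max\{2, n\}$, matching the lower bound, and assembling the four cases finishes the proof.

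I do not anticipate a genuine obstacle in the theorem itself, because the substance has already been absorbed into Proposition~\ref{proposition:standard-length} (the lower bounds and the exceptional $\F_2$ phenomena) and into Lemma~\ref{lemma:standard-flexibility} (descending alternativity). The only point demanding care is the bookkeeping of $d_0$ across the unital and non-unital settings, and the tracking of strict versus non-strict inequalities in the small-dimensional edge cases $n \in \{0,1\}$, where the $\max\{2,\cdot\}$ appearing in case~(4) must be reconciled with the inequality $k \le n$ precisely so that no off-by-one error creeps in.
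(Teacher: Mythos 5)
Your proposal is correct and follows essentially the same route as the paper: lower bounds and the exceptional $\F_2$ cases from Proposition~\ref{proposition:standard-length}, descending alternativity via Lemma~\ref{lemma:standard-flexibility} and Proposition~\ref{proposition:sufficient-condition}, and the upper bounds from Theorem~\ref{theorem:descendingly-alternative-length}. The only difference is that you spell out the arithmetic $\dim\A^* - d_0 \ge 2^{k-1}+k-2$ case by case, which the paper leaves implicit.
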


\begin{proof}
By Proposition~\ref{proposition:standard-length}, it is sufficient to prove the upper bounds in (1) and (4). By Proposition~\ref{proposition:sufficient-condition} and Lemma~\ref{lemma:standard-flexibility}, $\A^*$ is descendingly alternative, so we can use Theorem~\ref{theorem:descendingly-alternative-length}. For all values of $\dim \A^* \in \{1, 2, 4, 8\}$, the exact upper bound for $l(\A^*)$ follows from Theorem~\ref{theorem:descendingly-alternative-length} both in the unital and in the non-unital cases.
\end{proof}

\section{Okubo algebras} \label{section:Okubo-algebra}

\subsection{Definition and main properties} \label{subsection:Okubo-properties}

We use~\cite[p.~1198]{ElduqueMyung1} to define Okubo algebras over an arbitrary field $\F$, $\chrs \F \neq 2,3$. Assume first that~$\F$ is algebraically closed. Then~$\F$ contains the solution $\mu = \frac{3 \pm \sqrt{-3}}{6}$ to the equation $3X(1-X) = 1$. Let $sl_3(\F)$ denote the Lie algebra of $3 \times 3$ trace zero matrices over~$\F$. We define a new non-associative product ``$*$'' on $sl_3(\F)$ by
$$
x * y = \mu xy + (1-\mu) yx - \frac{\tr(xy)}{3}I,
$$
where $xy$ denotes the matrix product, and $I \in M_3(\F)$ is the identity matrix. The resulting algebra, denoted by $P_8(\F)$, is called the {\em pseudo-octonion algebra} over~$\F$. The quadratic form on $P_8(\F)$ is introduced by
$$
n(x) = \frac{\tr(x^2)}{6}.
$$
Then $P_8(\F)$ is a flexible composition algebra over~$\F$. Moreover, it is a symmetric composition algebra, see~\cite[pp.~42--43]{Okubo}. Recall that a composition algebra $(\A, *)$ is called {\em symmetric} if for any $x, y \in \A$ we have
\begin{equation} \label{equation:Okubo-flexibility}
(x * y) * x = x * (y * x) = n(x) y,
\end{equation}
 By linearizing Eq.~\eqref{equation:Okubo-flexibility}, we obtain that
\begin{equation} \label{equation:Okubo-flexibility-linearized}
(x * y) * z + (z * y) * x = x * (y * z) + z * (y * x) = n(x,z) y.
\end{equation}

Assume now that~$\F$ is not algebraically closed, and let~$\overline{\F}$ be an algebraic closure of~$\F$. An algebra~$\A$ over~$\F$ is called an $\F$-form of an algebra~$\B$ over~$\overline{\F}$ if its scalar extension to $\overline{\F}$ is isomorphic to~$\B$, that is, $\A \otimes_{\F} \overline{\F} \cong \B$. Then an Okubo algebra over~$\F$, $\chrs \F \neq 2, 3$, is defined as an arbitrary $\F$-form of $P_8(\overline{\F})$.

\begin{example}
The first Okubo algebras to be constructed were the complex algebra $P_8(\C)$ and its real form $\widetilde{P}_8(\R)$ which is called the {\em real pseudo-octonion algebra} and defined by
$$
\widetilde{P}_8(\R) = \{ x  \in M_3(\C) \; | \; x \text{ is Hermitian and } \tr(x) = 0 \},
$$
see~\cite{Okubo1} and~\cite[Section~4.2]{Okubo}. The algebra $P_8(\C)$ is not a division algebra, since it contains elements of zero norm, however, this is not the case for its real form $\widetilde{P}_8(\R)$. The algebra $\widetilde{P}_8(\R)$ is a flexible division composition algebra over $\R$.
\end{example}

If $\chrs \F = 3$, then one has to use another approach to define Okubo algebras and derive their properties, see~\cite{Okubo_symmetric2}. The behavior of Okubo algebras over a field~$\F$ with $\chrs \F = 2$ is similar to the general case and can be found in~\cite{ElduquePerez1, Elduque1}. Most importantly, any Okubo algebra over an arbitrary field~$\F$ is a symmetric composition algebra, i.e., satisfies~Eq.~\eqref{equation:Okubo-flexibility}. We denote an arbitrary Okubo algebra by $\Okubo$.

Recall that our definition of composition algebras assumes that the norm $n(\cdot)$ is strictly nondegenerate, that is, the corresponding symmetric bilinear form $n(a,b) = n(a+b) - n(a) - n(b)$ is nondegenerate. By~\cite[Lemma~2.3]{Okubo_symmetric2} and~\cite[(34.1)]{Knus}, an algebra $(\A,*)$ over an arbitrary field~$\F$ with a strictly nondegenerate quadratic form $n(\cdot)$ satisfies Eq.~\eqref{equation:Okubo-flexibility} if and only if it is a composition algebra with {\em associative bilinear form,} that is,
\begin{equation} \label{equation:switch-scalar-product}
n(x*y,z) = n(x,y*z)
\end{equation}
for all $x,y,z \in \A$.

\begin{proposition}
Symmetric composition algebras are descendingly flexible.
\end{proposition}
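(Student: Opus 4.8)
The plan is to check the three defining conditions of Definition~\ref{definition:descending} directly, observing that a symmetric composition algebra $(\A,*)$ already satisfies identities of exactly the required shape. Indeed, Eq.~\eqref{equation:Okubo-flexibility} handles the characteristic-two condition~\eqref{equation:general-flexibility}, while its linearization Eq.~\eqref{equation:Okubo-flexibility-linearized} handles conditions~\eqref{equation:general-flexibility-1} and~\eqref{equation:general-flexibility-2}. No auxiliary computation is needed beyond matching these identities against the definition.

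First I would record that, by Eq.~\eqref{equation:Okubo-flexibility}, for all $a, b \in \A$ we have $(a*b)*a = a*(b*a) = n(a)b$. Since $n(a) \in \F$, this element lies in $\Lin(b) \subseteq \Lin_1(a,b,a*a,a*b,b*a)$, which is precisely Eq.~\eqref{equation:general-flexibility}. In particular the coefficients at $a$, $a*a$, $a*b$, and $b*a$ may all be taken to be zero, so this condition holds in the strongest possible way.

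Next, for all $a, b, c \in \A$, Eq.~\eqref{equation:Okubo-flexibility-linearized} applied with $x = a$, $y = b$, $z = c$ gives
$$
(a*b)*c + (c*b)*a = n(a,c)b \quad \text{and} \quad a*(b*c) + c*(b*a) = n(a,c)b.
$$
As $n(a,c) \in \F$, both right-hand sides lie in $\Lin(b) \subseteq \Lin'_2(a,b,c)$, which are exactly conditions~\eqref{equation:general-flexibility-1} and~\eqref{equation:general-flexibility-2}.

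There is essentially no obstacle here: once the linearized identity~\eqref{equation:Okubo-flexibility-linearized} is available---and, by the discussion preceding this proposition, any symmetric composition algebra satisfies it in every characteristic---the membership statements reduce to the trivial observation that $b$ is among the generators listed both in $\Lin_1(a,b,a*a,a*b,b*a)$ and in $\Lin'_2(a,b,c)$. The only point worth a word of care is that the argument be uniform across all characteristics and cover both the unital and non-unital versions of $\Lin'_2(a,b,c)$; this is immediate, since adjoining $e$ to the generating list only enlarges the relevant spans, and the scalar multiples $n(a)b$ and $n(a,c)b$ already lie in $\Lin(b)$ regardless.
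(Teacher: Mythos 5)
Your proof is correct, but it takes a different route from the paper's. The paper disposes of this proposition in one line: since $(a*b)*a = a*(b*a) = n(a)b$, the strong condition of Proposition~\ref{proposition:sufficient-condition} holds (the expression has the form~\eqref{equation:general-flexibility-3} with coefficient $0$ at $a*a$, trivially depending only on $b$), and that proposition then yields descending flexibility without any further computation --- the linearization work is hidden inside the companion paper's proof of Proposition~\ref{proposition:sufficient-condition}. You instead verify all three conditions of Definition~\ref{definition:descending} directly: Eq.~\eqref{equation:Okubo-flexibility} gives~\eqref{equation:general-flexibility}, and the linearized identity~\eqref{equation:Okubo-flexibility-linearized} gives~\eqref{equation:general-flexibility-1} and~\eqref{equation:general-flexibility-2}. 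This is sound: the linearization of $(x*y)*x = x*(y*x) = n(x)y$ is characteristic-free (it is obtained by substituting $x + z$ for $x$ and cancelling, with no division by $2$), and your observation that the right-hand sides $n(a)b$ and $n(a,c)b$ lie already in $\Lin(b)$ correctly settles both the unital and non-unital versions of $\Lin'_2(a,b,c)$. What the paper's route buys is brevity and reuse of machinery that is needed elsewhere anyway; what your route buys is self-containedness (no appeal to the black-box sufficient condition) and the slightly sharper conclusion that all the required memberships hold in the one-dimensional span $\Lin(b)$, which is the strongest possible form.
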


\begin{proof}
Follows immediately from Eq.~\eqref{equation:Okubo-flexibility} and Proposition~\ref{proposition:sufficient-condition}.
\end{proof}

By~\cite[Lemma~3]{Okubo_symmetric1}, another important example of symmetric composition algebras are para-Hurwitz algebras, see Definition~\ref{definition:standard}. The element $e$ in a para-Hurwitz algebra $\A^*$ is a {\em para-unit}, that is, it satisfies $e*a = a*e = n(e,a) a - a = \bar{a}$ for all $a \in \A^*$. Besides, $e*e = \bar{e} = e$, so $e$ is a nonzero idempotent in $\A^*$.

\begin{theorem}[{\cite[p.~298]{Elduque1}, \cite[Theorems~4.2 and~4.3]{ElduqueMyung1}, \cite[Lemma~3.3]{ElduquePerez1}}] \label{theorem:forms-of-para-Hurwitz}
An algebra~$\A$ over~$\F$ is an $\F$-form of a para-Hurwitz algebra over~$\overline{\F}$ if and only if one of the following condition holds:
\begin{enumerate}[{\rm (1)}]
    \item $\A$ is a para-Hurwitz algebra,
    \item $\dim \A = 2$ and there exists a basis $\{ u, v \}$ in~$\A$ which satisfies $u^2 = v$, $uv = vu = u$ and $v^2 = \lambda u - v$ for some $\lambda \in \F$ such that the polynomial $x^3 - 3x - \lambda$ is irreducible over $\F$.
\end{enumerate}
\end{theorem}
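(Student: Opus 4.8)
The plan is to prove both implications by locating and counting the \emph{para-units} of the algebra, and then using Galois descent to decide when such an element is rational over $\F$. Recall that a para-unit is a nonzero idempotent $e$ with $e*a = a*e = \bar a$ for all $a$, and that an algebra possessing a para-unit is precisely a para-Hurwitz algebra. Throughout, write $\Gamma = \mathrm{Gal}(\overline{\F}/\F)$ and set $\B = \A \otimes_{\F} \overline{\F}$, which by hypothesis is a para-Hurwitz algebra over $\overline{\F}$, so that $\dim_{\overline{\F}} \B = \dim_{\F} \A \in \{1,2,4,8\}$. Since the relations $(x*y)*x = x*(y*x) = n(x)y$ and the bilinear form $n(\cdot,\cdot)$ are defined over $\F$, the algebra $\A$ is itself a symmetric composition algebra, and $\Gamma$ acts semilinearly on $\B$ preserving both $*$ and $n$. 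As the defining condition of a para-unit is phrased entirely in terms of $*$ and $n$, it follows that $\Gamma$ permutes the set $P$ of para-units of $\B$.

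The first, routine, step is the converse implication. If $\A$ is para-Hurwitz then so is $\B$, and $\A$ is a form. For the exceptional two-dimensional algebra with basis $\{u,v\}$ I would pass to $\overline{\F}$, where $x^3 - 3x - \lambda$ splits, exhibit a nonzero idempotent solving $w*w = w$ (equivalently a root of that cubic in a suitable linear coordinate), and check directly that it is a para-unit of $\B$; this identifies $\B$ with the two-dimensional para-Hurwitz algebra and shows $\A$ is a form. The key computational lemma, which I would establish through the explicit model $\B \cong \overline{\F}\times\overline{\F}$ with $(x_1,y_1)*(x_2,y_2) = (y_1 y_2,\, x_1 x_2)$, is that $|P| = 1$ when $\dim \B \in \{1,4,8\}$ and $|P| = 3$ when $\dim \B = 2$. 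In the two-dimensional case the three para-units are exactly the three nonzero idempotents $(1,1)$, $(\zeta,\zeta^2)$, $(\zeta^2,\zeta)$ with $\zeta$ a primitive cube root of unity, and $\mathrm{Aut}(\B) \cong S_3$ permutes them faithfully.

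For the forward implication I would split on $\dim \A$. When $\dim \A \in \{1,4,8\}$ the set $P$ is a singleton, so its unique element is $\Gamma$-fixed and hence lies in $\A$; this $\F$-rational para-unit makes $\A$ para-Hurwitz, giving case~(1). When $\dim \A = 2$, the group $\Gamma$ acts on the three-element set $P$. If this action has a fixed point, the corresponding para-unit is $\F$-rational and again $\A$ is para-Hurwitz. Otherwise $\Gamma$ acts without fixed points, hence transitively through a subgroup of $S_3$ containing a $3$-cycle, and $\A$ has no $\F$-rational para-unit. In this last case I would construct the asserted basis: the common field of definition of a para-unit is a cubic extension $L/\F$, and choosing an $\F$-rational coordinate taking the values of the three conjugate para-units, a direct computation produces a basis $\{u,v\}$ of $\A$ with $u^2 = v$, $uv = vu = u$, $v^2 = \lambda u - v$. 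The minimal polynomial of that coordinate, after an explicit change of variables, is $x^3 - 3x - \lambda$, and the absence of an $\F$-rational para-unit is exactly its irreducibility over $\F$.

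The main obstacle is the dimension-two descent: confirming that a two-dimensional symmetric composition algebra with no rational para-unit is forced into the stated normal form, and pinning down that the governing cubic is $x^3 - 3x - \lambda$ with irreducibility equivalent to fixed-point-freeness of the $\Gamma$-action. I expect this to reduce to a finite explicit computation with the two structure constants, checking that the three idempotents over $\overline{\F}$ are the roots of that cubic and that $\Gamma$ can fix one precisely when the cubic is reducible. Finally, the counting lemma as stated uses three distinct cube roots of unity and so is valid for $\chrs \F \neq 3$, including $\chrs \F = 2$, where $x^3 - 1$ remains separable. The characteristic $\chrs \F = 3$, where the cube roots of unity collapse, and the full char-two framework require the separate treatments of~\cite{Okubo_symmetric2} and~\cite{ElduquePerez1,Elduque1}, which is why the statement is assembled from those sources.
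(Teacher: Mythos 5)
First, a point of comparison: the paper does not prove Theorem~\ref{theorem:forms-of-para-Hurwitz} at all --- it is imported verbatim from \cite{Elduque1}, \cite{ElduqueMyung1}, and \cite{ElduquePerez1} --- so your proposal can only be measured against those sources. Your organizing idea is sound, and your counting lemma is correct when $\chrs \F \neq 3$: a para-unit $f$ satisfies $f*x = x*f$ for all $x$, which in a para-Hurwitz algebra forces $\bar{f}$ into the commutative center, so in dimensions $1$, $4$, $8$ it is unique ($f = e$), while in dimension $2$ the three nonzero idempotents $(1,1)$, $(\zeta,\zeta^2)$, $(\zeta^2,\zeta)$ are indeed exactly the para-units. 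The genuine gap is the descent step ``the para-unit is $\Gamma$-fixed, hence lies in $\A$.'' The fixed field of $\Gamma = \mathrm{Gal}(\overline{\F}/\F)$ is not $\F$ but the purely inseparable closure of $\F$, so $\Gamma$-invariance of an element of $\B = \A \otimes_{\F} \overline{\F}$ places it only in $\A \otimes_{\F} \overline{\F}^{\Gamma}$; over any imperfect field (for instance $\F = \F_2(t)$, a characteristic-$2$ field you explicitly claim to cover) the step fails as written. This is not a removable technicality: in characteristic $3$ the theorem's own case (2), where $x^3 - 3x - \lambda = x^3 - \lambda$ with $\lambda \notin \F^3$ (say $\F = \F_3(t)$, $\lambda = t$), is a two-dimensional form whose scalar extension has a \emph{unique}, hence automatically $\Gamma$-fixed, para-unit, and yet that para-unit is not rational --- its coordinates are purely inseparable over $\F$. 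Applied verbatim, your inference ``unique $\Rightarrow$ fixed $\Rightarrow$ rational $\Rightarrow$ para-Hurwitz'' would prove case (2) empty in characteristic $3$, contradicting the statement. For dimensions $4$ and $8$ the gap is repairable in every characteristic by replacing Galois fixed points with linear algebra: the subspace $\{ g \in \B \: : \: g*x = x*g \text{ for all } x \in \B \}$ is the solution space of a linear system defined over $\F$, hence equals $\{ g \in \A \: : \: g*x = x*g \text{ for all } x \in \A \} \otimes_{\F} \overline{\F}$; it is one-dimensional and spanned by $f$, so some nonzero $g \in \A$ satisfies $g = cf$ with $c \in \overline{\F}$, and then $g*g = cg$ with $g, g*g \in \A$ forces $c \in \F$, whence $f \in \A$. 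No such repair exists in dimension $2$, which is exactly why the cited sources argue differently there.

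Second, even granting $\F$ perfect and $\chrs \F \neq 3$, the heart of case (2) is missing. You assert that ``a direct computation produces a basis $\{u,v\}$ of $\A$ with $u^2 = v$, $uv = vu = u$, $v^2 = \lambda u - v$'' and that the governing cubic is $x^3 - 3x - \lambda$ with irreducibility equivalent to fixed-point-freeness of the $\Gamma$-action, but this normal-form computation \emph{is} the theorem in dimension $2$; it is precisely what \cite[Theorems~4.2 and~4.3]{ElduqueMyung1} establish (by direct work with the structure constants of a two-dimensional symmetric composition algebra), with \cite{Elduque1} and \cite{ElduquePerez1} supplying the remaining characteristics. Deferring that computation, and deferring all of characteristic $3$ to the literature, means the proposal actually proves the statement only over perfect fields of characteristic $\neq 3$, and even there only modulo its central step --- whereas the theorem is asserted over an arbitrary field~$\F$.
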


The canonical multiplication table of Okubo algebras over an algebraically closed field~$\F$ of arbitrary characteristic is given by~\cite[p.~4, Table~1]{Elduque2} which coincides with Table~\ref{table:okubo-algebra-isotropic} for $\alpha = \beta = 1$. Clearly, if~$\F$ is algebraically closed, then the norm on any Okubo algebra over~$\F$ is isotropic, i.e., there exists nonzero $x \in \Okubo$ such that $n(x) = 0$. As the following theorem shows, this is also true for some other fields which need not be algebraically closed.

\begin{theorem}[{\cite[Corollary~3.4]{Elduque1}, \cite[Corollary~5]{Elduque3}, \cite[Proposition~7.3]{ElduqueMyung2}, \cite[Lemma~3.7]{ElduquePerez1}}] \label{theorem:Okubo-algebras-isotropic}
Assume that one of the following condition holds:
\begin{enumerate}[{\rm (1)}]
    \item $\chrs \F = 3$,
    \item $\chrs \F \neq 3$, and~$\F$ contains the cubic roots of $1$.
\end{enumerate}
Then the norm on any Okubo algebra over~$\F$ is isotropic.
\end{theorem}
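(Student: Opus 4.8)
The plan is to reduce the isotropy of the norm to a statement about trace-zero elements of an associative algebra, using the structural description of Okubo algebras. Recall that for the split model $P_8(\F)$ the norm is $n(x) = \tr(x^2)/6$, so $n$ vanishes precisely on the nonzero trace-zero matrices whose square is traceless. This feature persists for forms, and the first step is to realize an arbitrary Okubo algebra $\Okubo$ as the space of reduced-trace-zero elements of a suitable degree three associative algebra, on which $n$ is a nonzero scalar multiple of the reduced trace form $x \mapsto \mathrm{Trd}(x^2)$. Thus it will suffice to exhibit a nonzero such element with vanishing reduced trace form.

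First I would treat case (2), where $\chrs \F \neq 3$ and $\F$ contains the primitive cube roots of unity. Under these hypotheses the Kummer-type construction of Okubo, Osborn and Elduque identifies every Okubo algebra over $\F$ with $(A_0, *)$, where $A$ is a central simple associative $\F$-algebra of degree three, $A_0$ is its subspace of reduced-trace-zero elements, and the product is a deformation of the commutator of the form
\[
x * y = \omega\, xy - \omega^2\, yx - \lambda\,\mathrm{Trd}(xy)\,1
\]
for a primitive cube root of unity $\omega$ and a suitable scalar $\lambda$. The presence of the cube roots of unity is exactly what removes the Galois twist and lets $A$ be defined over $\F$ itself; this is why the real pseudo-octonion algebra, living over a field without them, is allowed to be anisotropic. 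On this model $n$ is a nonzero multiple of $\mathrm{Trd}(x^2)$, so I would split into the two Wedderburn possibilities for $A$. If $A$ is split, then $A \cong M_3(\F)$ and $A_0 = sl_3(\F)$; the elementary matrix $E_{12}$ is a nonzero trace-zero element with $E_{12}^2 = 0$, hence $n(E_{12}) = 0$. If $A$ is not split, then since three is prime $A$ is a central division algebra of degree three, and by Wedderburn's theorem such an algebra is cyclic, $A = (E/\F, \sigma, b)$, with a generator $u$ satisfying $u^3 = b \in \F^\times$. Passing to $A \otimes_\F E \cong M_3(E)$ sends $u$ to the companion matrix of $X^3 - b$, whose trace and whose square's trace both vanish, so $\mathrm{Trd}(u) = \mathrm{Trd}(u^2) = 0$ and $u$ is a nonzero isotropic vector. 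This settles case (2).

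For case (1), $\chrs \F = 3$, the linear construction above degenerates and one must use the dedicated model of Okubo algebras in characteristic three provided by the cited references. My plan would be to take the explicit multiplication table available there and exhibit a nonzero isotropic element directly, in the same spirit as the element $E_{12}$ above; the references show that in characteristic three the norm is always isotropic, so no anisotropic form survives.

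The main obstacle is not the final computation but the structural input: proving that, when $\F$ contains the cube roots of unity, every $\F$-form of $P_8(\overline{\F})$ really is $(A_0, *)$ for a degree three central simple algebra defined over $\F$, together with the identification of $n$ with the reduced trace form. This descent is the heart of the classification and is precisely where the hypothesis on cube roots of unity is used; once it is in hand, the Wedderburn dichotomy and the two explicit isotropic vectors finish the argument at once. The characteristic three case is a genuinely separate construction and would require its own analogous concrete verification.
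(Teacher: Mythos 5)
The paper never proves this theorem: it is imported wholesale from the cited literature (Elduque, Elduque--Myung, Elduque--P\'erez), so your proposal can only be compared with those sources' arguments. For case (2) your route is essentially theirs: with $\omega \in \F$ and $\chrs \F \neq 3$, every Okubo algebra is $(A_0,*)$ for a central simple $\F$-algebra $A$ of degree three, and the Wedderburn dichotomy (either $A \cong M_3(\F)$, or $A$ is a division algebra, hence cyclic by Wedderburn's theorem) yields the isotropic vectors $E_{12}$ and the Kummer generator $u$ with $u^3 = b$. One inaccuracy to flag: your identification of $n$ with a nonzero scalar multiple of $x \mapsto \mathrm{Trd}(x^2)$ fails in characteristic $2$, which case (2) allows (e.g.\ $\F = \F_4$), since on trace-zero elements $\mathrm{Trd}(x^2) = \mathrm{Trd}(x)^2 - 2\,\mathrm{Srd}(x)$ vanishes identically there, so it cannot be the (nondegenerate) norm. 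The correct statement, valid for every $\chrs \F \neq 3$, is that $n$ is a nonzero scalar multiple of $\mathrm{Srd}$, the second coefficient of the reduced characteristic polynomial (on trace-zero matrices in characteristic $\neq 2,3$ this recovers $\tr(x^2)/6$). Your two vectors have reduced characteristic polynomials $X^3$ and $X^3-b$ respectively, hence $\mathrm{Srd}=0$, so they remain isotropic and the argument survives once restated in terms of $\mathrm{Srd}$.

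The genuine gap is case (1), $\chrs \F = 3$. There you give no argument at all: you defer to ``the dedicated model \dots provided by the cited references'' and then assert that ``the references show that in characteristic three the norm is always isotropic'' --- but that assertion \emph{is} the statement to be proved, so the paragraph is circular as written. A non-circular proof along your intended lines must invoke the actual characteristic-$3$ classification (Okubo--Osborn, and Section~3 of Elduque's \emph{Symmetric composition algebras}): every Okubo algebra over a field of characteristic $3$ carries a basis multiplying as in Table~\ref{table:okubo-algebra-isotropic} for some $\alpha, \beta \in \F \setminus \{0\}$, after which isotropy is immediate --- for instance $x_{1,0} * x_{-1,0} = 0$ exhibits a zero divisor, so $n(x_{1,0}) = 0$ by Proposition~\ref{proposition:zero-divisors}. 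Note the asymmetry with case (2): there, granting the structural classification, you supply the real remaining content (the Wedderburn dichotomy and explicit vectors); in characteristic $3$ you supply nothing beyond a promise, and the construction is genuinely different (the degree-three central simple algebra picture degenerates, as $sl_3$ then contains the scalar matrices and $\tfrac13$ does not exist). As it stands, half of the theorem is unproven.
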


\begin{theorem}[{\cite[Theorem~7]{Elduque3}, \cite[Theorem~4.2]{Elduque6}}] \label{theorem:Okubo-isotropic-norm-classification}
An Okubo algebra over a field~$\F$ has isotropic norm if and only if its multiplication table is given by Table~\ref{table:okubo-algebra-isotropic} for some $\alpha, \beta \in \F \setminus \{ 0 \}$.
\end{theorem}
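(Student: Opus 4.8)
The statement is an equivalence whose two directions differ sharply in difficulty, so the plan is to dispose of the easy ``if'' direction first and then concentrate on the harder ``only if'' direction. Suppose the multiplication of $\Okubo$ is given by Table~\ref{table:okubo-algebra-isotropic} for some $\alpha, \beta \in \F \setminus \{ 0 \}$. The table is $\Z_3 \times \Z_3$-graded, the product of two homogeneous elements of degrees $(i,j)$ and $(k,l)$ being homogeneous of degree $(i+k, j+l)$, while the neutral component of degree $(0,0)$ is zero (it would consist of scalar matrices, which are excluded by the trace-zero condition). Hence for any nonzero homogeneous $x$, of degree $(i,j)$, the element $(x*x)*x$ is homogeneous of degree $(3i,3j) = (0,0)$ and therefore vanishes. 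By Eq.~\eqref{equation:Okubo-flexibility} this gives $n(x)x = 0$, so $n(x) = 0$ with $x \neq 0$, and the norm is isotropic.

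For the ``only if'' direction, assume that the norm of $\Okubo$ is isotropic and fix a nonzero $x$ with $n(x) = 0$. I would first record that $\Okubo$ then has zero divisors: since $n(x*y) = n(x)n(y) = 0$ for every $y$, every element of the image $x*\Okubo$ has zero norm, so the polar form vanishes identically on $x*\Okubo$; as the polar form is strictly nondegenerate on the $8$-dimensional space $\Okubo$, this totally isotropic subspace has $\dim (x*\Okubo) \leq 4$. Thus the left multiplication $L_x$ is not surjective and, $\Okubo$ being finite-dimensional, not injective, whence $x*y = 0$ for some $y \neq 0$. This confirms that isotropy of the norm is equivalent to the presence of zero divisors and places $\Okubo$ in the reduced case, where one expects an explicit basis.

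The heart of the proof is then to manufacture, over $\F$ itself, a basis in which the multiplication takes the shape of Table~\ref{table:okubo-algebra-isotropic}. Over $\overline{\F}$ the algebra $\Okubo \otimes_{\F} \overline{\F} \cong P_8(\overline{\F})$ carries the $\Z_3 \times \Z_3$ (Pauli) grading of $sl_3(\overline{\F})$ coming from the order-three element $\diag(1,\omega,\omega^2)$ and the cyclic permutation matrix, which commute up to a cube root of unity; its eight nonzero homogeneous components are one-dimensional and the Okubo product adds degrees, which is exactly the combinatorial skeleton of Table~\ref{table:okubo-algebra-isotropic}. The task is to descend a compatible pair of generators to $\F$. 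Starting from the isotropic vector, or equivalently from the zero divisor found above, and exploiting Eqs.~\eqref{equation:Okubo-flexibility}, \eqref{equation:Okubo-flexibility-linearized}, together with the associativity of the bilinear form, Eq.~\eqref{equation:switch-scalar-product}, I would construct two distinguished elements $u, v \in \Okubo$ generating this graded structure; the parameters $\alpha$ and $\beta$ then arise as the values of the cubic expressions in $u$ and in $v$ (the analogues of $u^3 = \alpha$ and $v^3 = \beta$), which over $\F$ need no longer equal $1$.

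The step of descending these generators to $\F$ and checking that all remaining products close up precisely into the entries of Table~\ref{table:okubo-algebra-isotropic} is the main obstacle: it is a finite but delicate normalization, since one must verify not merely the degree bookkeeping but the exact structure constants, and one must handle the characteristic-three case separately because the matrix model underlying $P_8$ is then unavailable. For this explicit basis reduction I would rely on the constructions of~\cite[Theorem~7]{Elduque3} and~\cite[Theorem~4.2]{Elduque6}, which carry out exactly this computation and thereby supply the canonical form asserted in the theorem.
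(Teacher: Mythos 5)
The paper gives no proof of this statement at all---it is quoted directly from \cite[Theorem~7]{Elduque3} and \cite[Theorem~4.2]{Elduque6}---and your proposal likewise delegates the substantive ``only if'' direction to exactly those two references, so it is essentially the same approach. Your supplementary arguments are correct: the $\Z_3\times\Z_3$-grading of Table~\ref{table:okubo-algebra-isotropic} with trivial neutral component forces $(x*x)*x = n(x)x = 0$ for nonzero homogeneous $x$, giving isotropy, and your derivation that an isotropic norm yields zero divisors (the image of $L_x$ is totally isotropic, hence $L_x$ is singular) agrees with Proposition~\ref{proposition:zero-divisors}.
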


In the case when an Okubo algebra with isotropic norm has nonzero idempotents, its multiplication table can be simplified even more.

\begin{theorem}[{\cite[Theorem~3.18]{Elduque4} \cite[Theorem~5.9]{Elduque5}}]
The Okubo algebras over an arbitrary field~$\F$ with isotropic norm and nonzero idempotents are precisely the algebras
with multiplication Table~\ref{table:okubo-algebra-isotropic} for
\begin{enumerate}[{\rm (1)}]
    \item $\alpha = \beta = 1$ if $\chrs \F \neq 3$;
    \item $\alpha = 1$ and some $\beta \in \F \setminus \{ 0 \}$ if $\chrs \F = 3$.
\end{enumerate}
\end{theorem}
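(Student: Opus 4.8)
The plan is to build on the immediately preceding theorem, Theorem~\ref{theorem:Okubo-isotropic-norm-classification}, which already guarantees that any Okubo algebra with isotropic norm is isomorphic to the algebra $\Okubo_{\alpha,\beta}$ defined by Table~\ref{table:okubo-algebra-isotropic} for some $\alpha, \beta \in \F \setminus \{ 0 \}$. It therefore remains to decide, for each such pair $(\alpha,\beta)$, whether $\Okubo_{\alpha,\beta}$ contains a nonzero idempotent, and to match the outcome with the two listed cases. Both inclusions have to be shown: that the stated parameter values do produce an idempotent, and that no other values can.

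First I would record a strong constraint satisfied by every nonzero idempotent. If $e * e = e$ with $e \neq 0$, then the symmetric composition identity~\eqref{equation:Okubo-flexibility} gives $e = (e*e)*e = n(e)\,e$, so that $n(e) = 1$; in particular $e$ is anisotropic and the maps $L_e, R_e$ are mutually inverse bijections, since $R_e L_e = L_e R_e = n(e)\,\mathrm{id} = \mathrm{id}$ by~\eqref{equation:Okubo-flexibility}. This restricts the search to the norm-one locus and, crucially, allows me to apply Kaplansky's construction (recalled in the introduction) with $a = e$: the twisted product $x \times y = (e * x) * (y * e)$ turns $\Okubo_{\alpha,\beta}$ into a unital composition algebra with unit $e$, hence a Hurwitz algebra, and the original Okubo product is recovered from it as a Petersson-type twist by an order-three automorphism. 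Since the norm is isotropic, this Hurwitz algebra is split; so the presence of an idempotent is tied to the existence of a compatible triple consisting of the split Hurwitz structure, a cube root of unity, and an order-three automorphism.

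Then, for $\chrs \F \neq 3$, I would expand the equation $e * e = e$ in the basis of Table~\ref{table:okubo-algebra-isotropic}, writing $e = \sum_i \lambda_i x_i$ and collecting coefficients. The resulting quadratic system, combined with $n(e) = 1$ and the associativity of the bilinear form~\eqref{equation:switch-scalar-product}, is solvable exactly when the scaling parameters can be normalized, which forces $\alpha = \beta = 1$; conversely, for $\alpha = \beta = 1$ one exhibits an explicit idempotent straight from the table. The characteristic-three case must be handled separately, because the $sl_3(\F)$ model degenerates and the cube roots of $1$ collapse; there the analogous computation decouples the two parameters, forcing $\alpha = 1$ while leaving $\beta \in \F \setminus \{ 0 \}$ free, which reflects the genuinely distinct family of Okubo algebras arising when $\chrs \F = 3$.

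The main obstacle I expect is precisely this characteristic-three analysis: the standard construction is unavailable, so one must invoke the alternative description of Okubo algebras in that characteristic and check that the idempotent equation separates $\alpha$ from $\beta$. For the remaining characteristics the argument is routine once the reduction $n(e)=1$ above has pinned $e$ to the anisotropic sphere and the Kaplansky twist has identified the underlying Hurwitz structure. The full verification of both cases is carried out in~\cite[Theorem~3.18]{Elduque4} and~\cite[Theorem~5.9]{Elduque5}, which we cite for the complete proof.
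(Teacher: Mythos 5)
First, a point of comparison: the paper contains no internal proof of this statement at all --- it is imported verbatim from \cite[Theorem~3.18]{Elduque4} and \cite[Theorem~5.9]{Elduque5}, which is exactly what your final sentence also does. So the question is whether the sketch you interpose between the statement and the citation is sound. Your preliminary reductions are correct: by Eq.~\eqref{equation:Okubo-flexibility} a nonzero idempotent $e$ has $n(e)=1$, the maps $L_e$ and $R_e$ are mutually inverse bijections, and Kaplansky's product $x\times y=(e*x)*(y*e)$ turns $\Okubo$ into a Hurwitz algebra with unit $e$, which is split because the norm is isotropic.

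The flaw is in the central step, where you claim that expanding $e*e=e$ in the basis of Table~\ref{table:okubo-algebra-isotropic} yields a system that is ``solvable exactly when the scaling parameters can be normalized, which forces $\alpha=\beta=1$''. This is false. Take $\alpha=1$ and an arbitrary $\beta\in\F\setminus\{0\}$: the element $e=-(x_{1,0}+x_{-1,0})$ satisfies $e*e=x_{1,0}*x_{1,0}+x_{-1,0}*x_{-1,0}=-x_{-1,0}-x_{1,0}=e$, since the cross products $x_{1,0}*x_{-1,0}$ and $x_{-1,0}*x_{1,0}$ vanish in the table. More generally, $s\,x_{1,0}+t\,x_{-1,0}$ is a nonzero idempotent whenever $t^{3}=-\alpha$ and $s=-t^{2}/\alpha$, so every $\alpha$ with $-\alpha$ a cube admits one, for every $\beta$. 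This does not contradict the theorem, because the theorem is a statement about isomorphism classes: by Theorem~\ref{theorem:Okubo-isotropic-norm-classification} every pair $(\alpha,\beta)$ yields an Okubo algebra with isotropic norm, but the parametrization is far from injective --- for instance, in characteristic $\neq 3$ the algebras with parameters $(1,\beta)$ are, by the very theorem under discussion together with the idempotent just exhibited, all isomorphic to the one with $(1,1)$. Consequently no coefficient-matching computation in a fixed basis can establish either inclusion; the genuine content is to show that existence of an idempotent pins down the isomorphism class, which in \cite{Elduque4,Elduque5} is done by structural means (essentially the Petersson-twist picture you allude to), and the same objection applies to your characteristic-$3$ claim that the computation ``forces $\alpha=1$''. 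As written, your argument would prove a false statement about the parameters, and the concluding citation is carrying the entire proof.
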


\begin{theorem}[{\cite[Theorem~3.2]{Elduque6}}] \label{theorem:Okubo-algebras-finite}
If~$\F$ is finite, then any Okubo algebra over~$\F$ has multiplication Table~\ref{table:okubo-algebra-isotropic} with $\alpha = \beta = 1$, and thus has isotropic norm and nonzero idempotents.
\end{theorem}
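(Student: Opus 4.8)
The plan is to combine the two structural results stated just above---the classification of Okubo algebras with isotropic norm (Theorem~\ref{theorem:Okubo-isotropic-norm-classification}) and the characterization of those admitting nonzero idempotents---with two facts special to finite fields: that quadratic forms of large dimension are isotropic, and that the remaining scaling/twisting parameters can be absorbed. First I would observe that the norm $n(\cdot)$ on an Okubo algebra $\Okubo$ over $\F$ is a nondegenerate quadratic form in $8$ variables, and that over a finite field every nondegenerate quadratic form of dimension $\geq 3$ is isotropic (Chevalley--Warning produces a nontrivial zero of a single quadratic form in $\geq 3$ variables). Note that Theorem~\ref{theorem:Okubo-algebras-isotropic} alone does \emph{not} suffice here: writing $q = |\F|$, its hypotheses cover only $\chrs \F = 3$ and the case $3 \mid q - 1$ (when $\F$ contains the primitive cube roots of $1$), whereas finite fields with $\chrs \F \neq 3$ and $3 \nmid q - 1$ (such as $\F_2$ or $\F_5$) require the general isotropy argument. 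Granting isotropy, Theorem~\ref{theorem:Okubo-isotropic-norm-classification} already yields a basis of $\Okubo$ with multiplication Table~\ref{table:okubo-algebra-isotropic} for some $\alpha, \beta \in \F \setminus \{ 0 \}$.

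It remains to normalize $\alpha = \beta = 1$. The cleanest route is to show that $\Okubo$ possesses a nonzero idempotent and then invoke the characterization theorem stated above: for $\chrs \F \neq 3$ it identifies the Okubo algebras with isotropic norm and nonzero idempotents as exactly those admitting Table~\ref{table:okubo-algebra-isotropic} with $\alpha = \beta = 1$, which is precisely the desired conclusion (and from this normal form isotropy and the presence of idempotents are then immediate, giving the ``and thus'' clause). In characteristic $3$ the same characterization only forces $\alpha = 1$ for some $\beta \in \F \setminus \{ 0 \}$, so an additional normalization of $\beta$ is needed there.

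The main obstacle is precisely the existence of a nonzero idempotent over a finite field, since the characterization shows that isotropic Okubo algebras need \emph{not} carry idempotents in general---so this step holds the real content and cannot be read off from isotropy alone. I would attack it structurally: Okubo algebras are $\F$-forms of $P_8(\overline{\F})$ and are therefore classified by a Galois cohomology set $H^{1}(\F, \mathrm{Aut})$, where $\mathrm{Aut}$ is the automorphism group scheme of $P_8(\overline{\F})$, built from $PGL_3$. Over a finite field the cohomology of a connected linear algebraic group is trivial by Lang's theorem, and the degree-$3$ central simple algebras entering the construction are all split by Wedderburn's theorem on finite division rings; together these should force $\Okubo$ to be the split algebra, which carries idempotents explicitly. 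A more hands-on alternative is to rescale the basis vectors of Table~\ref{table:okubo-algebra-isotropic}: such substitutions should alter $\alpha$ and $\beta$ only by cubes in $\F_q^{*}$, so when $3 \nmid q - 1$ the cubing map is a bijection of $\F_q^{*}$ and both parameters are immediately absorbed. The delicate cases---$3 \mid q - 1$, where cubing is three-to-one, and $\chrs \F = 3$---are where I expect the difficulty to concentrate; there one must exploit the extra symmetries of the construction (the cyclic permutations of the $\Z/3$-grading, available precisely when the cube roots of unity lie in $\F$) to identify the surviving parameter classes with $(1,1)$.
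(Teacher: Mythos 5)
The paper never proves this statement: Theorem~\ref{theorem:Okubo-algebras-finite} is quoted verbatim from \cite[Theorem~3.2]{Elduque6} as part of the background survey, so there is no internal proof to compare with, and your proposal must stand on its own. Its first half does: the norm of an Okubo algebra over $\F_q$ is a strictly nondegenerate quadratic form in $8>2$ variables, hence isotropic by Chevalley--Warning, and your observation that Theorem~\ref{theorem:Okubo-algebras-isotropic} alone misses the finite fields with $\chrs \F \neq 3$ and $q \equiv 2 \pmod{3}$ is correct and important. Theorem~\ref{theorem:Okubo-isotropic-norm-classification} then yields Table~\ref{table:okubo-algebra-isotropic} for some $\alpha, \beta \in \F \setminus \{0\}$.

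The gaps are all in the normalization $\alpha = \beta = 1$, where you hold the right tools but match them to the wrong cases. First, Lang's theorem applies to smooth connected algebraic groups; the automorphism group scheme of an Okubo algebra is such a group in characteristic $\neq 3$ (a form of $\mathrm{PGL}_3$, namely the projective unitary group of $\F(\omega)/\F$, $\omega$ a primitive cube root of unity), but it is known \emph{not} to be smooth in characteristic $3$, so the cohomological route breaks in one of the characteristics the theorem must cover, and you never flag this. Second, your rescaling claim is true --- substitutions $x_{i,j} \mapsto \lambda_{i,j}x_{i,j}$ preserve the shape of Table~\ref{table:okubo-algebra-isotropic} exactly when $\lambda_{-1,0} = \lambda_{1,0}^{-1}$, $\lambda_{0,-1} = \lambda_{0,1}^{-1}$, $\lambda_{1,1} = \lambda_{1,0}\lambda_{0,1}$, and so on, and then $(\alpha,\beta)$ becomes $(\alpha\lambda_{1,0}^{3}, \beta\lambda_{0,1}^{3})$ --- but you only assert it (``should alter''), and you misjudge its scope: it settles every field with $3 \nmid q-1$, which \emph{includes all finite fields of characteristic} $3$, since $3^k - 1 \equiv 2 \pmod{3}$. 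So characteristic $3$, which you list as delicate and for which you worry about the residual $\beta$ left by the idempotent characterization, is in fact the easy case for this route. Third, the genuinely hard case is $q \equiv 1 \pmod 3$ with $\chrs \F \neq 3$, where cube classes are nontrivial; there one needs either the Lang argument (which \emph{is} valid in that case) or the identification of the algebra of Table~\ref{table:okubo-algebra-isotropic} with the symbol algebra $(\alpha,\beta)_{\omega}$ combined with Wedderburn (triviality of the Brauer group of a finite field). You name both ingredients but assemble neither; ``exploit the extra symmetries'' is not yet an argument. Once these are sorted --- rescaling for $3 \nmid q-1$, Lang or Wedderburn for $3 \mid q-1$ --- your outline does yield a correct proof, but as written no case of the normalization is actually closed.
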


The following theorem of~\cite{ElduquePerez1} provides a complete classification of symmetric composition algebras with nonzero idempotents. In the case when $\chrs \F \neq 3$, an Okubo algebra is obtained by choosing a new multiplication on the Hurwitz algebra $C(-1,\beta,\gamma)$, where the notation from Theorem~\ref{theorem:classification} is used. However, we only need the multiplication table of the resulting algebra, so we omit the detailed description of this construction.

\begin{theorem}[{\cite[Theorems~A$'$ and B$'$]{ElduquePerez1}}] \label{theorem:symmetric-idempotents-classification}
An algebra~$\A$ over a field~$\F$ is a symmetric composition algebra with nonzero idempotents if and only if either~$\A$ is para-Hurwitz or~$\A$ is an Okubo algebra with multiplication table given by
\begin{enumerate}[{\rm (1)}]
    \item Table~\ref{table:okubo-algebra-idempotents} for some $\beta, \gamma \in \F \setminus \{ 0 \}$ if $\chrs \F \neq 3$;
    \item Table~\ref{table:okubo-algebra-isotropic} for $\alpha = 1$ and some $\beta \in \F \setminus \{ 0 \}$ if $\chrs \F = 3$.
\end{enumerate}
\end{theorem}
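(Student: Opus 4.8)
The plan is to prove each direction of the equivalence using the structural dichotomy for symmetric composition algebras together with the classifications already quoted above. I will repeatedly use that both para-Hurwitz algebras and Okubo algebras are symmetric composition algebras, by \cite[Lemma~3]{Okubo_symmetric1} and by the remarks above establishing that every Okubo algebra satisfies Eq.~\eqref{equation:Okubo-flexibility}, respectively.

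For sufficiency, suppose $\A$ is one of the listed algebras. If $\A$ is para-Hurwitz, then its para-unit $e$ is a nonzero idempotent, since $e * e = \bar{e} = e$. If instead $\A$ is the Okubo algebra given by Table~\ref{table:okubo-algebra-idempotents} (when $\chrs \F \neq 3$) or by Table~\ref{table:okubo-algebra-isotropic} with $\alpha = 1$ (when $\chrs \F = 3$), then I would read off a nonzero idempotent directly from the corresponding multiplication table. In either case $\A$ is a symmetric composition algebra with a nonzero idempotent.

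For necessity, let $\A$ be a symmetric composition algebra containing a nonzero idempotent $w$, so $w * w = w$. By \cite[Theorem~2.9]{Elduque5}, $\A$ is either an $\F$-form of a para-Hurwitz algebra or an Okubo algebra. In the first case, Theorem~\ref{theorem:forms-of-para-Hurwitz} leaves only two options: either $\A$ is para-Hurwitz, or $\A$ is the two-dimensional algebra of part~(2), with basis $\{u,v\}$ satisfying $u^2 = v$, $uv = vu = u$, $v^2 = \lambda u - v$. I would rule out the latter by writing a candidate idempotent as $p u + q v$, expanding $(pu+qv)^2$ via these relations, and checking that idempotency would force a scalar to be a root of $x^3 - 3x - \lambda$ in $\F$, contradicting the irreducibility of that polynomial; hence $\A$ is para-Hurwitz.

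It remains to treat an Okubo algebra $\Okubo$ possessing a nonzero idempotent $w$, and here lies the main obstacle. Taking $x = y = w$ in Eq.~\eqref{equation:Okubo-flexibility} gives $w = (w*w)*w = n(w)w$, so $n(w) = 1$; moreover Eq.~\eqref{equation:Okubo-flexibility} yields $R_w L_w = L_w R_w = \mathrm{Id}$, so $L_w$ and $R_w$ are bijective. When $\chrs \F = 3$, the norm of $\Okubo$ is automatically isotropic by Theorem~\ref{theorem:Okubo-algebras-isotropic}, so Theorem~\ref{theorem:Okubo-isotropic-norm-classification} applies and the refinement to Okubo algebras with isotropic norm and nonzero idempotents (stated just above) pins the parameter to $\alpha = 1$, producing Table~\ref{table:okubo-algebra-isotropic}. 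The genuine difficulty is the case $\chrs \F \neq 3$, where the norm need \emph{not} be isotropic; for instance, the real pseudo-octonion algebra is a division algebra yet contains the idempotent $\diag(2,-1,-1)$, so isotropy cannot be invoked. Here I would apply Kaplansky's construction recalled in the introduction \cite[p.~957]{Kaplansky} to the norm-one element $w$, obtaining a Hurwitz octonion algebra $(\Okubo,\times)$ with unit $w^2 = w$ and the same quadratic form, and then show that the original product $*$ is recovered from the octonion multiplication in the Okubo fashion. The crux is to prove that this octonion algebra is forced to be $C(-1,\beta,\gamma)$ — equivalently, that the relevant quadratic subalgebra is $\F[\omega]$ for a primitive cube root of unity $\omega$ — which is exactly what yields Table~\ref{table:okubo-algebra-idempotents} for suitable $\beta,\gamma \in \F \setminus \{0\}$ and completes the classification.
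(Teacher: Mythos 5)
This theorem is not proved in the paper at all: it is imported verbatim from Elduque--P\'erez (\cite[Theorems~A$'$ and B$'$]{ElduquePerez1}), as the bracketed citation in its header indicates, and the paper uses it only as a classification tool. So your attempt is, in effect, a from-scratch reproof of a deep classification result. Judged that way, the outer layers of your argument are acceptable in the paper's citation-based spirit: sufficiency for the para-Hurwitz case is correct ($e*e=\bar e=e$), the dichotomy via \cite[Theorem~2.9]{Elduque5} and Theorem~\ref{theorem:forms-of-para-Hurwitz} is the right reduction, ruling out idempotents in the two-dimensional form via irreducibility of $x^3-3x-\lambda$ is a standard and workable computation, and the case $\chrs \F = 3$ does follow by chaining Theorem~\ref{theorem:Okubo-algebras-isotropic}, Theorem~\ref{theorem:Okubo-isotropic-norm-classification} and the quoted refinement for isotropic norm with idempotents.

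The genuine gap is exactly where you say ``the crux'' lies, and it is not filled. For $\chrs \F \neq 3$ and an Okubo algebra $\Okubo$ with idempotent $w$, Kaplansky's construction from \cite[p.~957]{Kaplansky} indeed produces a Hurwitz algebra $(\Okubo,\times)$ with unit $w$ and the same norm; that much is fine. But the two assertions you then need --- (i) that the original product is recovered from $\times$ ``in the Okubo fashion,'' i.e.\ as a Petersson-type twist $x*y=\varphi(\bar x)\times\varphi^2(\bar y)$ for an \emph{automorphism} $\varphi$ of $(\Okubo,\times)$ of order $3$, and (ii) that the existence of such a $\varphi$ forces the Hurwitz algebra to be $C(-1,\beta,\gamma)$ and yields precisely Table~\ref{table:okubo-algebra-idempotents} --- are themselves the entire content of Theorems~A$'$ and B$'$. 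Your proposal offers no construction of $\varphi$ from $w$, no proof that it is multiplicative or of order $3$ (this is where $\chrs \F \neq 3$ must enter, via the eigenstructure of $\varphi$ and the quadratic \'etale subalgebra $\F[\omega]$), and no derivation of the two-generator presentation that produces the table entries with parameters $\beta,\gamma \in \F\setminus\{0\}$. Writing ``the crux is to prove X'' identifies the goal but proves nothing; as it stands the hardest case of the theorem is assumed, not established. Two smaller points: in Table~\ref{table:okubo-algebra-isotropic} with $\alpha=1$ no \emph{basis} element is idempotent (e.g.\ $x_{1,0}*x_{1,0}=-x_{-1,0}$), so ``read off directly'' should be replaced by exhibiting, say, $-(x_{1,0}+x_{-1,0})$; and in both sufficiency cases you also silently use the nontrivial fact that these tables define symmetric composition (indeed Okubo) algebras, which again rests on the cited classifications rather than on anything you verify.
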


\afterpage{
\begin{landscape}
\begin{table}[H]
\centering
$
\begin{array}{|c||c|c|c|c|c|c|c|c|}
\hline
\vphantom{\Big|} * & x_{1,0} & x_{-1,0} & x_{0,1} & x_{0,-1} & x_{1,1} & x_{-1,-1} & x_{-1,1} & x_{1,-1} \\\hline\hline
\vphantom{\Big|} x_{1,0} & -\alpha x_{-1,0} & 0 & 0 & x_{1,-1} & 0 & x_{0,-1} & 0 & \alpha x_{-1,-1}\\\hline
\vphantom{\Big|} x_{-1,0} & 0 & -\alpha^{-1} x_{1,0} & x_{-1,1} & 0 & x_{0,1} & 0 & \alpha^{-1} x_{1,1} & 0\\\hline
\vphantom{\Big|} x_{0,1} & x_{1,1} & 0 & -\beta x_{0,-1} & 0 & \beta x_{1,-1} & 0 & 0 & x_{1,0}\\\hline
\vphantom{\Big|} x_{0,-1} & 0 & x_{-1,-1} & 0 & -\beta^{-1} x_{0,1} & 0 & \beta^{-1} x_{-1,1} & x_{-1,0} & 0\\\hline
\vphantom{\Big|} x_{1,1} & \alpha x_{-1,1} & 0 & 0 & x_{1,0} & -(\alpha \beta) x_{-1,-1} & 0 & \beta x_{0,-1} & 0\\\hline
\vphantom{\Big|} x_{-1,-1} & 0 & \alpha^{-1} x_{1,-1} & x_{-1,0} & 0 & 0 & -(\alpha \beta)^{-1} x_{1,1} & 0 & \beta^{-1} x_{0,1}\\\hline
\vphantom{\Big|} x_{-1,1} & x_{0,1} & 0 & \beta x_{-1,-1} & 0 & 0 & \alpha^{-1} x_{1,0} & -\alpha^{-1} \beta x_{1,-1} & 0\\\hline
\vphantom{\Big|} x_{1,-1} & 0 & x_{0,-1} & 0 & \beta^{-1} x_{1,1} & \alpha x_{-1,0} & 0 & 0 & -\alpha \beta^{-1} x_{-1,1}\\\hline
\end{array}
$
\caption{\label{table:okubo-algebra-isotropic} Multiplication table of Okubo algebra with isotropic norm.}
\end{table}

\begin{table}[H]
\centering
$
\begin{array}{|c||c|c|c|c|c|c|c|c|}
\hline
\vphantom{\Big|} * & x_0 & x_1 & x_2 & x_3 & x_4 & x_5 & x_6 & x_7 \\\hline\hline
\vphantom{\Big|} x_0 & x_0 & -x_0 - x_1 & -x_2 & -x_3 & x_4 + x_5 & -x_4 & x_6 + x_7 & -x_6\\\hline
\vphantom{\Big|} x_1 & -x_0 - x_1 & x_1 & x_2 + x_3 & -x_2 & -x_5 & x_4 + x_5 & -x_6 & -x_7\\\hline
\vphantom{\Big|} x_2 & -x_2 & -x_3 & \beta x_0 & -\beta (x_0 + x_1) & -x_6 - x_7 & x_6 & -\beta (x_4 + x_5) & \beta x_4\\\hline
\vphantom{\Big|} x_3 & -x_3 & x_2 + x_3 & \beta x_1 & \beta x_0 & x_6 & x_7 & \beta x_5 & -\beta (x_4 + x_5)\\\hline
\vphantom{\Big|} x_4 & -x_5 & x_4 + x_5 & -x_7 & x_6 + x_7 & -\gamma (x_0 + x_1) & \gamma x_1 & -\gamma x_3 & \gamma (x_2 + x_3)\\\hline
\vphantom{\Big|} x_5 & x_4 + x_5 & -x_4 & x_6 + x_7 & -x_6 & \gamma x_0 & -\gamma (x_0 + x_1) & -\gamma x_2 & -\gamma x_3\\\hline
\vphantom{\Big|} x_6 & -x_7 & -x_6 & -\beta x_5 & -\beta x_4 & -\gamma (x_2 + x_3) & \gamma x_3 & -\beta \gamma x_1 & \beta \gamma (x_0 + x_1)\\\hline
\vphantom{\Big|} x_7 & x_6 + x_7 & -x_7 & \beta (x_4 + x_5) & -\beta x_5 & \gamma x_2 & -\gamma (x_2 + x_3) & -\beta \gamma x_0 & -\beta \gamma x_1\\\hline
\end{array}
$
\caption{\label{table:okubo-algebra-idempotents} Multiplication table of Okubo algebra with nonzero idempotents, $\chrs \F \neq 3$.}
\end{table}
\end{landscape}
}

By~\cite[(34.10)]{Knus}, if~$\A$ is a symmetric composition algebra over a field $\F$, then either~$\A$ contains an idempotent, or there is a cubic field extension $\mathbb{K}/\F$ such that $\A \otimes_{\F} \mathbb{K}$ contains an idempotent. This result has two immediate corollaries.

\begin{proposition}[{\cite[Corollary~2]{Okubo_symmetric1}}] \label{proposition:contains-idempotent}
Let~$\F$ be either algebraically or real closed field. If~$\A$ is a symmetric composition algebra over $\F$, then~$\A$ contains a nonzero idempotent.
\end{proposition}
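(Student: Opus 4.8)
The plan is to reduce everything to the dichotomy quoted immediately above from~\cite[(34.10)]{Knus}: a symmetric composition algebra~$\A$ over~$\F$ either already contains a nonzero idempotent, or there exists a cubic field extension $\mathbb{K}/\F$ such that $\A \otimes_{\F} \mathbb{K}$ contains one. Since the second branch presupposes the existence of a cubic extension of~$\F$, the entire proof amounts to showing that neither an algebraically closed nor a real closed field admits such an extension, thereby forcing the first branch.

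First I would dispose of the algebraically closed case: such a field has no proper algebraic extensions whatsoever, so in particular no cubic field extension $\mathbb{K}/\F$ can exist, and the second alternative is vacuous. For the real closed case I would invoke the Artin--Schreier description of real closed fields, according to which the algebraic closure $\overline{\F}$ is obtained by adjoining a square root of $-1$, so that $[\overline{\F} : \F] = 2$. Any finite extension $\mathbb{K}/\F$ embeds into $\overline{\F}$ over~$\F$, and the tower law then forces $[\mathbb{K} : \F]$ to divide $2$; hence it equals $1$ or $2$, and no cubic extension is possible. In both cases the second branch of the dichotomy is excluded, so~$\A$ must contain a nonzero idempotent.

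The argument is very short, so no genuine obstacle arises; the only ingredient demanding care is the structural fact about real closed fields, namely that their algebraic closure has degree exactly $2$. This is a standard consequence of Artin--Schreier theory, and it is precisely what rules out the cubic extension on which the nontrivial branch of the Knus dichotomy would rely.
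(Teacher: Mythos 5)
Your proof is correct and follows essentially the same route as the paper, which presents this proposition as an immediate corollary of the dichotomy from~\cite[(34.10)]{Knus}: either $\A$ contains a nonzero idempotent, or some cubic field extension $\mathbb{K}/\F$ makes $\A \otimes_{\F} \mathbb{K}$ contain one. Your observation that algebraically closed fields admit no proper algebraic extensions, and that real closed fields admit only extensions of degree dividing $2$ (by Artin--Schreier), is exactly the intended argument ruling out the second branch.
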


\begin{corollary}[{\cite[Theorem~2.9]{Elduque5}}] \label{corollary:symmetric-composition-classification}
Any symmetric composition algebra~$\A$ over an arbitrary field~$\F$ is either a form of a para-Hurwitz algebra (see Theorem~\ref{theorem:forms-of-para-Hurwitz}) or an Okubo algebra.
\end{corollary}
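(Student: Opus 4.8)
The plan is to reduce the statement to the case of an algebraically closed base field, where the classification of symmetric composition algebras possessing an idempotent is already available, and then to descend. First I would fix an algebraic closure $\overline{\F}$ of $\F$ and pass to the scalar extension $\B = \A \otimes_{\F} \overline{\F}$. Because the symmetric composition law~\eqref{equation:Okubo-flexibility} is a polynomial identity in the multiplication and the norm, and because the Gram matrix of the polar form is unchanged by base change (its determinant is a nonzero element of $\F$, hence remains nonzero in $\overline{\F}$, which handles strict nondegeneracy even when $\chrs \F = 2$), the algebra $\B$ is again a composition algebra over $\overline{\F}$ with strictly nondegenerate norm satisfying~\eqref{equation:Okubo-flexibility}. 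Thus $\B$ is a symmetric composition algebra over $\overline{\F}$.

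Next, since $\overline{\F}$ is algebraically closed, Proposition~\ref{proposition:contains-idempotent} guarantees that $\B$ contains a nonzero idempotent. I would then invoke Theorem~\ref{theorem:symmetric-idempotents-classification}, which classifies symmetric composition algebras possessing a nonzero idempotent: it shows that $\B$ is either a para-Hurwitz algebra over $\overline{\F}$ or an Okubo algebra over $\overline{\F}$ with multiplication table as in Table~\ref{table:okubo-algebra-isotropic}.

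Finally I would transfer this dichotomy back to $\F$. If $\B$ is para-Hurwitz over $\overline{\F}$, then by definition $\A$ is an $\F$-form of a para-Hurwitz algebra, so it is described by Theorem~\ref{theorem:forms-of-para-Hurwitz}. If instead $\B$ is an Okubo algebra over $\overline{\F}$, then, since over the algebraically closed field $\overline{\F}$ every Okubo algebra is isomorphic to $P_8(\overline{\F})$, we obtain $\A \otimes_{\F} \overline{\F} \cong P_8(\overline{\F})$, which is exactly the definition of $\A$ being an Okubo algebra over $\F$. This yields the two alternatives of the statement.

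The main obstacle is ensuring that this descent is unambiguous, i.e., that the two cases over $\overline{\F}$ are genuinely disjoint, so that the classification type is an honest invariant of $\A$ rather than an artifact of the chosen closure. The delicate point is dimension eight: an eight-dimensional para-Hurwitz algebra and the Okubo algebra $P_8(\overline{\F})$ are both symmetric composition algebras of dimension eight over $\overline{\F}$, yet they are non-isomorphic, so no $\F$-algebra can simultaneously be a form of both. This disjointness, together with the completeness of the list in Theorem~\ref{theorem:symmetric-idempotents-classification}, is precisely the structural content imported from the theory of Okubo algebras; granting it, the corollary becomes a formal consequence of base change and the idempotent-existence result of Proposition~\ref{proposition:contains-idempotent}.
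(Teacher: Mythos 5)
Your argument is correct and is essentially the paper's own (implicit) justification: the paper gives no proof, citing \cite[Theorem~2.9]{Elduque5} and presenting the corollary as an immediate consequence of the idempotent-existence result \cite[(34.10)]{Knus} combined with Theorem~\ref{theorem:symmetric-idempotents-classification}, which are precisely your ingredients, only routed through the algebraic closure and Proposition~\ref{proposition:contains-idempotent} rather than through the cubic extension in \cite[(34.10)]{Knus} directly. One small tightening: ``a polynomial identity extends under base change'' is not literally valid over finite fields (a polynomial function can vanish identically without the polynomial being zero), so to see that $\A \otimes_{\F} \overline{\F}$ still satisfies Eq.~\eqref{equation:Okubo-flexibility} you should instead use that this identity is quadratic in $x$ and linear in $y$ and holds on $\A$ together with its linearization Eq.~\eqref{equation:Okubo-flexibility-linearized}, whence it extends to $\A \otimes_{\F} \overline{\F}$ by polarization (your Gram-determinant argument for strict nondegeneracy is fine, and the disjointness discussion at the end is harmless but unnecessary, since the corollary asserts an inclusive ``or'').
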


The following theorem was originally proved by Okubo~\cite[Theorem~1.1]{Okubo_flexible1} in the case when $\chrs \F \neq 2, 3$. Then Elduque and Myung gave another proof of this fact in~\cite[Theorem~1.2]{ElduqueMyung1} and classified Okubo algebras over an arbitrary field $\F$, $\chrs \F \neq 2, 3$, in~\cite[Theorem~6.2]{ElduqueMyung1}. These results were later improved in~\cite[Main Theorem; Sections~5 and~6]{ElduqueMyung2}, where a relationship between flexible composition algebras and separable alternative algebras was established, and the conditions under which an Okubo algebra has nonzero idempotents were determined, again with $\chrs \F \neq 2, 3$. Similar results hold also for the case when $\chrs \F = 2$, see the paragraph after Theorem~1.1 in~\cite{Elduque1}. Finally, in~\cite[Theorem~5.1]{Elduque1} all symmetric composition algebras over a field~$\F$ with $\chrs \F = 3$ were explicitly classified (both with and without nonzero idempotents). The current version of Theorem~\ref{theorem:flexible-composition-classification} was obtained by Elduque and Myung in~\cite{ElduqueMyung3}. Moreover, they showed that the condition of flexibility can be replaced with a weaker condition of strict third power associativity, see~\cite[Theorem~4.5]{ElduqueMyung3}.

\begin{theorem}[{\cite[Theorem~3.2]{ElduqueMyung3}}] \label{theorem:flexible-composition-classification}
Any finite-dimensional flexible composition algebra~$\A$ over a field~$\F$ of arbitrary characteristic is either unital, i.e., Hurwitz algebra, or symmetric (see Corollary~\ref{corollary:symmetric-composition-classification}).
\end{theorem}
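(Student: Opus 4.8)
The plan is to pass to the associated Hurwitz algebra, rewrite the given product as a principal isotope of it, and then read off flexibility as a rigid constraint on a pair of norm isometries that is ultimately governed by triality. First I would fix the dimension. Since~$\A$ is a finite-dimensional composition algebra, a norm-one element $a \in \A$ exists, and both $L_a$ and $R_a$ are bijective isometries; Kaplansky's construction~\cite{Kaplansky} then produces a \emph{unital} composition algebra $(\A, \times)$ with the same norm $n(\cdot)$ and unit $e = aa$. By the Generalized Hurwitz Theorem~\ref{theorem:classification}, $(\A,\times)$ is a Hurwitz algebra, so $\dim \A \in \{1,2,4,8\}$, the case $\dim \A = 1$ being immediate. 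The key observation is that the original product is a principal isotope of $\times$: substituting $x \mapsto xa$ and $y \mapsto ay$ into the defining relation $x \times y = R_a^{-1}(x)\,L_a^{-1}(y)$ gives
\[
xy = \phi(x) \times \psi(y), \qquad \phi = R_a,\ \psi = L_a,
\]
where $\phi$ and $\psi$ are the original multiplication operators by $a$. As $n(a) = 1$, both $\phi$ and $\psi$ lie in the orthogonal group $O(n)$ of the polar form $n(\cdot,\cdot)$.

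Second, I would translate flexibility into a functional equation for the pair $(\phi,\psi)$. Writing $(xy)x = x(yx)$ in terms of $\times$ turns flexibility into
\[
\phi(\phi(x)\times\psi(y))\times\psi(x) = \phi(x)\times\psi(\phi(y)\times\psi(x))
\]
for all $x,y \in \A$. The two desired alternatives are already visible here. The algebra is unital precisely when $\phi$ is a right and $\psi$ a left $\times$-multiplication by invertible elements, in which case $(\A,\cdot)$ is a unital isotope of the Hurwitz algebra and hence itself Hurwitz; and, since $(xy)x = R_xL_x(y)$ and $x(yx) = L_xR_x(y)$, the algebra is symmetric precisely when $L_xR_x = n(x)\,\mathrm{id}$ for every $x$, i.e.\ $x(yx) = n(x)y$.

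Third --- and this is the crux --- I would solve the functional equation for the isometry pair. For $\dim \A \in \{2,4\}$ the Hurwitz algebra is commutative or associative, and the equation can be analyzed directly. For $\dim\A = 8$ the engine is the principle of triality for the octonion norm: the related triples $(t_1,t_2,t_3)\in O(n)^3$ with $t_1(x\times y) = t_2(x)\times t_3(y)$ form a group isomorphic to $\Spin(n)$, each of whose component projections covers the special orthogonal group and which carries the $S_3$-symmetry of triality. Feeding the isotopy $xy = \phi(x)\times\psi(y)$ through this correspondence converts the functional equation into a rigid condition on the associated element of $\Spin(n)$, whose only solutions are the triples arising from the identity (giving the unital case) and from a conjugation/order-three triality automorphism (giving the symmetric case). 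Matching these back to $(\A,\cdot)$ yields exactly the dichotomy, so $\A$ is Hurwitz or symmetric in the sense of Corollary~\ref{corollary:symmetric-composition-classification}.

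The hard part is this last step, together with uniformity over an arbitrary field. The triality analysis must rule out any ``mixed'' isometry pair that is neither one-sided nor of the symmetric type --- this is exactly where the rigidity of $\Spin(n)$ does the work --- and the characteristics $2$ and $3$ require separate care: in characteristic $2$ the polar form and the quadratic trace degenerate so that one may not divide by $2$ and the relevant composition identities must be reproved, while in characteristic $3$ the internal para-Hurwitz/Okubo split of the symmetric case and the behaviour of the cube roots of unity interact nontrivially with triality.
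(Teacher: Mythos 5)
First, a point of order: the paper contains no proof of this statement for you to match --- Theorem~\ref{theorem:flexible-composition-classification} is imported verbatim from \cite[Theorem~3.2]{ElduqueMyung3} --- so your attempt has to stand on its own as a reconstruction of Elduque--Myung's result. Your preliminary reductions are correct and standard: Kaplansky's construction \cite{Kaplansky} yields a Hurwitz algebra $(\A,\times)$ with the same norm, whence $\dim \A \in \{1,2,4,8\}$ by Theorem~\ref{theorem:classification}; the original product is indeed the isotope $xy = \phi(x)\times\psi(y)$ with $\phi = R_a$, $\psi = L_a$ isometries of $n(\cdot)$; and your functional equation is the correct translation of flexibility.

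The genuine gap is your third step, which is not a step but the theorem itself. The assertion that the only pairs $(\phi,\psi)$ solving the functional equation are those coming from the identity (unital case) or from conjugation composed with an order-three triality pair (symmetric case) is exactly the dichotomy to be proved, and nothing in the sketch proves it: no mechanism is offered for excluding ``mixed'' pairs, the dimensions $2$ and $4$ are dismissed with ``can be analyzed directly'', and the characteristic $2$ and $3$ difficulties are named but deferred --- yet uniformity in characteristic is precisely what distinguishes \cite{ElduqueMyung3} from its predecessors \cite{ElduqueMyung1,ElduqueMyung2}. The triality formalism itself also needs care over an arbitrary field: the group of related triples $(t_1,t_2,t_3)$ is a model of the spin group, its projections to the orthogonal group are two-to-one with image the spinor kernel rather than all of $\mathrm{SO}(n)$ in general, and in characteristic $2$ the whole setup must be phrased via the quadratic form, not the degenerate polar form; see \cite[Chapter~VIII]{Knus}. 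Finally, be aware that your plan differs in kind from the cited proof: Elduque and Myung argue by elementary manipulation of composition identities --- indeed they need only strict third power associativity rather than full flexibility, cf.\ \cite[Theorem~4.5]{ElduqueMyung3} --- and obtain the dichotomy in the form ``either a unit exists, or the bilinear form is associative'', where associativity of the form, Eq.~\eqref{equation:switch-scalar-product}, is equivalent to the symmetric law~\eqref{equation:Okubo-flexibility} for strictly nondegenerate norms. A triality-based argument along your lines is plausible in dimension $8$ (it is close to how symmetric composition algebras are treated in the modern literature), but as written the crux is assumed, not proved.
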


For more information on symmetric composition algebras, see~\cite{Elduque4,Elduque6} and \cite[Chapter~VIII]{Knus}.

\subsection{Lengths of Okubo algebras with nonzero idempotents or zero divisors} \label{subsection:Okubo-length}

\begin{proposition}[{\cite[Lemma~2.1]{ElduqueMyung1}}] \label{proposition:zero-divisors}
Let~$\A$ be a finite-dimensional composition algebra. A nonzero element $a \in \A$ is a zero divisor if and only if $n(a) = 0$.
\end{proposition}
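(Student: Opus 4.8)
The plan is to avoid relying on a unit element, since the algebra may be non-unital, and instead to exploit the multiplicativity of the norm through its two linearizations. Polarizing $n(ab) = n(a)n(b)$ in the first argument (replacing $a$ by $a+c$ and cancelling the three instances of the composition law) yields $n(ab, cb) = n(a,c)\, n(b)$, and polarizing in the second argument yields $n(ab, ad) = n(a)\, n(b,d)$, both valid for all $a,b,c,d \in \A$. These two identities are the engine of both implications, and I would establish them first.

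For the implication $n(a) = 0 \Rightarrow a$ is a zero divisor, I would fix such a nonzero $a$ and examine the left multiplication operator $L_a \colon x \mapsto ax$. The second linearized identity gives $n(ax, ay) = n(a)\, n(x,y) = 0$ for all $x,y \in \A$, so the image $L_a(\A)$ is totally isotropic for the polar form. Since $n(\cdot,\cdot)$ is nondegenerate on the finite-dimensional space $\A$, any totally isotropic subspace has dimension at most $\tfrac{1}{2}\dim \A$; hence $\dim L_a(\A) \le \tfrac{1}{2}\dim \A < \dim \A$, and rank--nullity forces $\ker L_a \neq 0$. Any nonzero $b$ with $ab = 0$ then witnesses that $a$ is a zero divisor.

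For the converse, assume $a \neq 0$ is a zero divisor, so that $ab = 0$ or $ba = 0$ for some nonzero $b$. In the first case the second linearized identity gives $0 = n(ab, ad) = n(a)\, n(b,d)$ for all $d$; were $n(a) \neq 0$, this would force $n(b, \cdot)$ to vanish identically, contradicting nondegeneracy since $b \neq 0$, so $n(a) = 0$. The case $ba = 0$ is symmetric: specializing the first linearized identity with the fixed factor equal to $a$ gives $n(ba, ya) = n(b,y)\, n(a)$, whence $0 = n(b,y)\, n(a)$ for all $y$, and the same nondegeneracy argument yields $n(a) = 0$.

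The routine but essential steps are deriving the two linearized identities correctly and invoking the standard bound on the dimension of a totally isotropic subspace in a nondegenerate space. I expect the only genuine subtlety to be the non-unital setting, which is precisely why I work with the multiplication operators and the polar form rather than with the inverse $\bar a / n(a)$ that one would use in the unital Hurwitz case; finite-dimensionality is used only through the isotropy bound and rank--nullity, exactly as the hypothesis of the proposition supplies.
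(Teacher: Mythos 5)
Your proof is correct, but there is nothing in the paper to compare it against: the paper states this proposition as a quotation of \cite[Lemma~2.1]{ElduqueMyung1} and gives no argument of its own. Your argument is, in substance, the standard proof behind that cited lemma, so it serves as a correct self-contained replacement for the citation. Both linearizations $n(ab,cb)=n(a,c)\,n(b)$ and $n(ab,ad)=n(a)\,n(b,d)$ are derived correctly (they use only bilinearity of the product and the composition law, no unit), and the two implications are sound: for $n(a)=0$ the image of $L_a$ is totally isotropic for the polar form, hence of dimension at most $\tfrac12\dim\A$, so $\ker L_a\neq 0$ by rank--nullity, producing a left zero divisor (which suffices for the stated notion of zero divisor); conversely, if $ab=0$ or $ba=0$ with $b\neq 0$ and $n(a)\neq 0$, the appropriate linearized identity forces $n(b,\cdot)\equiv 0$, contradicting strict nondegeneracy. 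Two features of your write-up are worth keeping: the argument never divides by $2$ and only ever touches the polar form, so it is valid in characteristic $2$, matching the paper's arbitrary-field setting; and finite-dimensionality enters exactly twice (the bound $\dim W\le\tfrac12\dim\A$ for a totally isotropic subspace, and rank--nullity), which is precisely the hypothesis the proposition supplies.
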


In this subsection we compute the length of an arbitrary Okubo algebra $\Okubo$ with nonzero idempotents or zero divisors. Proposition~\ref{proposition:zero-divisors} implies that existence of zero divisors is equivalent to the fact that the norm on $\Okubo$ is isotropic. Hence, by Theorems~\ref{theorem:Okubo-isotropic-norm-classification} and~\ref{theorem:symmetric-idempotents-classification}, in this case the multiplication on $\Okubo$ is given either by Table~\ref{table:okubo-algebra-isotropic} or by Table~\ref{table:okubo-algebra-idempotents}. To shorten notation, we replace $*$-multiplication with concatenation, that is, we rewrite $x * y$ simply as~$xy$.

\begin{example} \label{example:Okubo-alternative}
$\Okubo$ is not descendingly alternative:
\begin{itemize}
    \item In Table~\ref{table:okubo-algebra-isotropic} we take $a = x_{1,0}$ and $b = x_{0,-1}$. It holds that $aa = -\alpha x_{-1,0}$, $ab = x_{1,-1}$, $ba = 0$, and $a(ab) = \alpha x_{-1,-1}$. Then $a(ab) \notin \Lin_1(a,b,aa,ab,ba)$.
    \item In Table~\ref{table:okubo-algebra-idempotents} we take $a = x_3$ and $b = x_6$. We have $aa = \beta x_0$, $ab = \beta x_5$, $ba = -\beta x_4$, and $a(ab) = \beta x_7$. Clearly, $a(ab) \notin \Lin_1(a,b,aa,ab,ba)$.
\end{itemize}
\end{example}

\begin{example} \label{example:Okubo-length}
There are $a,b \in \Okubo$ such that $S = \{ a,b \}$ is generating for $\Okubo$ and $l(S) = 4$. Since $\Okubo$ is descendingly flexible, we have $x(yx), (xy)x, (xy)z + (zy)x, x(yz) + z(yx) \in \Lin_2(S)$ for all $x,y,z \in S$. Hence
\begin{align*}
    \Lin_1(S) &{}= \Lin(\{a,b\});\\
    \Lin_2(S) &{}= \Lin_1(S) + \Lin(\{aa,ab,ba,bb\});\\
    \Lin_3(S) &{}= \Lin_2(S) + \Lin(\{a(ab),a(bb),(ba)a,(bb)a\});\\
    \Lin_4(S) &{}= \Lin_3(S) + \Lin(S^{(4)}).
\end{align*}
\begin{itemize}
    \item In Table~\ref{table:okubo-algebra-isotropic} we take $a = x_{0,1}$, $b = x_{1,0}$. Then
    \begin{align*}
        aa &{}= -\beta x_{0,-1}, & ba &{}= 0, & a(ab) &{}= \beta x_{1,-1}, \\
        bb &{}= -\alpha x_{-1,0}, & a(bb) &{}= 0, & (bb)a &{}= -\alpha x_{-1,1}, \\
        ab &{}= x_{1,1}, & (ba)a &{}= 0, & (aa)(bb) &{}= (\alpha \beta) x_{-1,-1}.
    \end{align*}
    Hence $\Lin_3(S) \neq \Okubo$ but $\Lin_4(S) = \Okubo$. Therefore, $S$ is generating for $\Okubo$, and $l(S) = 4$.
    \item In Table~\ref{table:okubo-algebra-idempotents} we take $a = x_1$, $b = x_3 + x_7$. Then 
    \begin{align*}
        aa &{}= x_1 = a, & a(ab) &{}= -x_2 - x_3 + x_7 = -ba,\\
        ab &{}= -x_2 - x_7, & a(bb) &{}= -\beta x_0 - (\beta + \beta \gamma) x_1 - 2 \beta x_4 - \beta x_5,\\
        ba &{}= x_2 + x_3 - x_7, & (ba)a &{}= x_2 + x_7 = -ab,\\
        bb &{}= \beta x_0 - \beta \gamma x_1 - \beta x_4 - 2\beta x_5, & (bb)a &{}= -\beta x_0 - (\beta + \beta \gamma) x_1 + \beta x_4 - \beta x_5.
    \end{align*}
    In this case $\chrs \F \neq 3$, so the elements $b, ab, ba$ are linearly independent, and thus $\Lin(b,ab,ba) = \Lin(x_2, x_3, x_7)$. We also have $(bb)a - a(bb) = 3\beta x_4$, so 
    $$
    \Lin(a, bb, a(bb), (bb)a) = \Lin(x_1, x_4, \beta x_0 - 2\beta x_5, -\beta x_0 - \beta x_5) = \Lin(x_0, x_1, x_4, x_5).
    $$
    Therefore, $\Lin_3(S) = \Lin(x_0, x_1, x_2, x_3, x_4, x_5, x_7)$. Consider now 
    $$
    ((bb)a)b = (\beta + 2 \beta \gamma) (x_2 + x_3) + 3\beta x_6 + (2 \beta + \beta \gamma) x_7.
    $$
    Since the coefficient at $x_6$ is nonzero, we have $\Lin_4(S) \supseteq \Lin_3(S) + \Lin(((bb)a)b) = \Okubo$. Hence $S$ is generating for $\Okubo$, and $l(S) = 4$.
\end{itemize}
\end{example}

\begin{theorem} \label{theorem:Okubo-length}
Let $\Okubo$ be an Okubo algebra over an arbitrary field~$\F$ with nonzero idempotents or zero divisors. Then $l(\Okubo) = 4$.
\end{theorem}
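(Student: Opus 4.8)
The plan is to combine the general upper bound for descendingly flexible algebras with the explicit generating sets already exhibited in Example~\ref{example:Okubo-length}. First I would record that every Okubo algebra is $8$-dimensional, being an $\F$-form of $P_8(\overline{\F})$, and that it is non-unital, so that $\dim \Okubo = 8$ and $d_0 = 0$. Since $\Okubo$ is descendingly flexible, Theorem~\ref{theorem:descendingly-flexible-length} applies to $k = l(\Okubo)$: were $k = 5$, the theorem would force $\dim \Okubo - d_0 \geq 2k - 1 = 9 > 8$, and since the stated lower bound is monotone increasing in $k$, every $k \geq 5$ is likewise excluded. Hence $l(\Okubo) \leq 4$, and it remains only to produce a generating set whose length is exactly~$4$.

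For the lower bound I would carry out the case analysis dictated by the hypothesis. If $\Okubo$ has zero divisors, then by Proposition~\ref{proposition:zero-divisors} its norm is isotropic, so by Theorem~\ref{theorem:Okubo-isotropic-norm-classification} its multiplication is given by Table~\ref{table:okubo-algebra-isotropic} for some $\alpha, \beta \in \F \setminus \{ 0 \}$. If instead $\Okubo$ has a nonzero idempotent, then Theorem~\ref{theorem:symmetric-idempotents-classification} shows that its multiplication is given by Table~\ref{table:okubo-algebra-idempotents} when $\chrs \F \neq 3$, and by Table~\ref{table:okubo-algebra-isotropic} with $\alpha = 1$ (and some $\beta$) when $\chrs \F = 3$. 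Thus in every admissible case the multiplication of $\Okubo$ is described by one of the two explicit tables.

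Finally I would invoke Example~\ref{example:Okubo-length}, which supplies a two-element system $S$ of length $4$ for each table: for Table~\ref{table:okubo-algebra-isotropic} the pair $a = x_{0,1}$, $b = x_{1,0}$ (computed for arbitrary $\alpha, \beta$, hence in particular when $\alpha = 1$), and for Table~\ref{table:okubo-algebra-idempotents} the pair $a = x_1$, $b = x_3 + x_7$ (whose verification uses $\chrs \F \neq 3$, precisely the regime in which that table occurs). In both cases the example checks that $S$ generates $\Okubo$ and that $\Lin_3(S) \neq \Okubo = \Lin_4(S)$, so $l(S) = 4$. Combined with the upper bound, this gives $l(\Okubo) = 4$.

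The proof is therefore essentially an assembly, the genuine computational content being the verification of the nested spans in Example~\ref{example:Okubo-length}. The one point demanding care is that the case analysis truly exhausts the hypothesis: one must confirm that the characteristic-$3$ idempotent case falls under the isotropic-norm table rather than Table~\ref{table:okubo-algebra-idempotents}, so that the two generating systems of Example~\ref{example:Okubo-length} do cover every possibility permitted by ``nonzero idempotents or zero divisors.''
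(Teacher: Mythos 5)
Your proposal is correct and follows essentially the same route as the paper: the upper bound $l(\Okubo) \leq 4$ comes from descending flexibility via Theorem~\ref{theorem:descendingly-flexible-length} (with $\dim \Okubo = 8$, $d_0 = 0$), and the lower bound from the explicit two-element systems of Example~\ref{example:Okubo-length}, with the hypothesis reduced to the two multiplication tables exactly as in the paper's preamble to Subsection~\ref{subsection:Okubo-length}. The only difference is that you spell out the exclusion of $k \geq 5$ and the characteristic-$3$ case analysis explicitly, which the paper leaves implicit.
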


\begin{proof}
By Example~\ref{example:Okubo-length}, $l(\Okubo) \geq 4$. Since $\Okubo$ is descendingly flexible and $\dim \Okubo = 8$, Theorem~\ref{theorem:descendingly-flexible-length} implies that $l(\Okubo) \leq 4$. Hence $l(\Okubo) = 4$.
\end{proof}

\begin{remark} \label{remark:nice-field}
By Theorems~\ref{theorem:Okubo-algebras-isotropic} and~\ref{theorem:Okubo-algebras-finite}, if~$\F$ is finite, $\chrs \F = 3$ or~$\F$ contains the cubic roots of $1$, then the norm on any Okubo algebra $\Okubo$ over~$\F$ is isotropic. By Proposition~\ref{proposition:contains-idempotent}, if~$\F$ is either algebraically or real closed field, then any Okubo algebra $\Okubo$ over~$\F$ has a nonzero idempotent. In both cases we immediately have $l(\Okubo) = 4$.
\end{remark}

\subsection{Lengths of Okubo algebras without nonzero idempotents and zero divisors} \label{subsection:Okubo-length-without-idempotents}

In this subsection $\Okubo$ denotes an Okubo algebra without nonzero idempotents and zero divisors over an arbitrary field~$\F$. By Proposition~\ref{proposition:zero-divisors}, the norm on $\Okubo$ is anisotropic, so, by Theorem~\ref{theorem:Okubo-algebras-isotropic}, in this case $\chrs \F \neq 3$.

\begin{lemma} \label{lemma:Okubo-subalgebras}
\leavevmode
\begin{enumerate}[{\rm (1)}]
    \item Any nonzero $x \in \Okubo$ generates a two-dimensional subalgebra $\Lin(x, x^2)$.
    \item If $x, y \in \Okubo$ do not belong to the same two-dimensional subalgebra of $\Okubo$, i.e., $x \neq 0$ and $y \notin \Lin(x, x^2)$, then $x, x^2, y, y^2$ are linearly independent.
\end{enumerate}
\end{lemma}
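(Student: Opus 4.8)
The plan is to first establish in (1) that $\Lin(x, x^2)$ is closed under multiplication, identifying it as the subalgebra generated by $x$, and then to derive (2) from a structural observation about two-dimensional subalgebras. For (1), I would set $x^2 = x*x$ and compute all four products of $x$ and $x^2$. The symmetric composition identity~\eqref{equation:Okubo-flexibility} gives $x * x^2 = x^2 * x = (x*x)*x = n(x)x$ at once. For the remaining product $x^2 * x^2$, I would invoke the linearized identity~\eqref{equation:Okubo-flexibility-linearized} with $y = x$ and $z = x^2$, which yields $(x*x)*x^2 + (x^2 * x)*x = n(x,x^2)x$; since $(x^2*x)*x = (n(x)x)*x = n(x)x^2$, this rearranges to $x^2 * x^2 = n(x,x^2)x - n(x)x^2 \in \Lin(x, x^2)$. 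Hence $\Lin(x, x^2)$ is closed under $*$, and as it contains $x$ it coincides with the subalgebra generated by $x$.

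It then remains to check that this subalgebra has dimension exactly two, i.e. that $x$ and $x^2$ are linearly independent for $x \neq 0$. I would argue by contradiction, assuming $x^2 = \lambda x$. If $\lambda = 0$ then $x*x = 0$ with $x \neq 0$, so $x$ is a zero divisor, contradicting Proposition~\ref{proposition:zero-divisors} together with the standing hypothesis that $\Okubo$ has none. If $\lambda \neq 0$, then $e = \lambda^{-1}x$ satisfies $e*e = \lambda^{-2}x^2 = \lambda^{-1}x = e$, so $e$ is a nonzero idempotent, again a contradiction. This finishes (1).

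For (2), the key observation is that \emph{every} nonzero element of a two-dimensional subalgebra generates it: if $B$ is such a subalgebra and $0 \neq z \in B$, then $z^2 \in B$ by closure, while $z, z^2$ are linearly independent by part (1) applied to $z$, so $\Lin(z, z^2) = B$. Using this, I would compare the two-dimensional subalgebras $\Lin(x, x^2)$ and $\Lin(y, y^2)$, which are distinct because $y \in \Lin(y,y^2)$ but $y \notin \Lin(x,x^2)$ (in particular $y \neq 0$, so part (1) applies to $y$). Were they to share a nonzero vector $z$, the observation would force $\Lin(x,x^2) = \Lin(z,z^2) = \Lin(y,y^2)$, a contradiction; hence $\Lin(x,x^2) \cap \Lin(y,y^2) = \{0\}$. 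Two two-dimensional subspaces meeting only in $0$ span a four-dimensional sum, and this sum is spanned by the four vectors $x, x^2, y, y^2$, which are therefore linearly independent.

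The product computation in (1) is routine once the linearized identity is applied; the points demanding care are the two-dimensionality argument, where the absence of nonzero idempotents and of zero divisors is used in an essential way, and the generation property in (2), which is the genuinely structural step converting the desired linear-algebraic statement into the clean fact that two distinct such subalgebras intersect trivially.
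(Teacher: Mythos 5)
Your proof is correct and takes essentially the same route as the paper's: part (1) identifies $\Lin(x,x^2)$ as the two-dimensional subalgebra generated by $x$, with independence of $x$ and $x^2$ forced exactly as in the paper by the absence of nonzero idempotents and zero divisors, and part (2) rests on the same key fact that any nonzero element of a two-dimensional subalgebra generates it, so the two distinct subalgebras $\Lin(x,x^2)$ and $\Lin(y,y^2)$ meet trivially. The only minor difference is how closure is established in (1): the paper deduces $\Lin_{\infty}(\{x\}) = \Lin_2(\{x\})$ from $x(xx) = (xx)x = n(x)x$ via the stabilization property of descendingly flexible algebras (Lemma~\ref{lemma:full-stabilization}), whereas you verify closure directly by computing $x^2 * x^2 = n(x,x^2)x - n(x)x^2$ from the linearized identity~\eqref{equation:Okubo-flexibility-linearized}; both are valid.
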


\begin{proof}
\leavevmode
\begin{enumerate}[{\rm (1)}]
    \item Let $x \in \Okubo \setminus \{ 0 \}$. Then $x(xx) = (xx)x = n(x)x$, so $\Lin_3(\{ x \}) = \Lin_2(\{ x \})$, and thus, by Lemma~\ref{lemma:full-stabilization}, $\Lin_{\infty}(\{ x \}) = \Lin_2(\{ x \})$. In other words, the subalgebra generated by~$x$ coincides with $\Lin(x, x^2)$. Since there are no nonzero idempotents and zero divisors in~$\Okubo$, the elements $x$ and $x^2$ are linearly independent, so $\dim \Lin(x, x^2) = 2$.
    \item Assume to the contrary that $x, x^2, y, y^2$ are linearly dependent. Then there exists a nonzero element $z \in \Lin(x, x^2) \cap \Lin(y, y^2)$. It follows from~(1) that $\Lin(x, x^2) = \Lin(z, z^2) = \Lin(y, y^2)$, so $y \in \Lin(x, x^2)$, a contradiction.
    \qedhere
\end{enumerate}
\end{proof}

\begin{lemma} \label{lemma:Okubo-nondegenerate}
Let $\B \subseteq \Okubo$ be a nonzero subalgebra. Then either $\B = \Okubo$ or $\dim \B = 2$.
\end{lemma}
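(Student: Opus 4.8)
The plan is to show that $\B$, equipped with the restriction of the norm $n(\cdot)$, is itself a symmetric composition algebra, and then to read off the possible dimensions from the classification in Corollary~\ref{corollary:symmetric-composition-classification}. First I would record the trivial lower bound: for any nonzero $x \in \B$, Lemma~\ref{lemma:Okubo-subalgebras}(1) gives $\Lin(x, x^2) \subseteq \B$, so $\dim \B \geq 2$. The restricted form already permits composition, since $n(x*y) = n(x)n(y)$ holds throughout $\Okubo$ and $\B$ is closed under $*$; moreover $\B$ inherits Eq.~\eqref{equation:Okubo-flexibility}. Thus the only nontrivial point is that the restriction of the polar form $n(\cdot,\cdot)$ to $\B$ is nondegenerate, i.e.\ that $\B$ is strictly nondegenerate.

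I expect this nondegeneracy to be the main obstacle, chiefly because the easy argument ``$n(x,x) = 2n(x) \neq 0$'' fails in characteristic $2$, which is allowed here. I would handle all characteristics at once by the following idempotent trick. Suppose the radical of $n(\cdot,\cdot)|_{\B}$ is nonzero and pick $0 \neq x$ lying in it. Since $x^2 = x*x \in \B$, we get $n(x, x^2) = 0$. Because $\Okubo$ has no zero divisors and has anisotropic norm (Proposition~\ref{proposition:zero-divisors}), we have $x^2 \neq 0$ and $n(x) \neq 0$. Linearizing flexibility as in Eq.~\eqref{equation:Okubo-flexibility-linearized} with the triple $(x, x, x^2)$ yields $(x*x)*x^2 + (x^2 * x)*x = n(x,x^2)\, x = 0$; using $(x*x)*x = n(x)x$ from Eq.~\eqref{equation:Okubo-flexibility}, this simplifies to $x^2 * x^2 = -n(x)\, x^2$. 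Hence $e = -n(x)^{-1} x^2$ satisfies $e*e = e$ and $e \neq 0$, a nonzero idempotent of $\Okubo$, contradicting our standing assumption. Therefore the radical is trivial and $\B$ is strictly nondegenerate.

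It then follows that $(\B, *, n|_{\B})$ is a composition algebra satisfying Eq.~\eqref{equation:Okubo-flexibility}, hence a symmetric composition algebra, so Corollary~\ref{corollary:symmetric-composition-classification} applies: $\B$ is either an Okubo algebra or a form of a para-Hurwitz algebra. If $\B$ is an Okubo algebra, then $\dim \B = 8 = \dim \Okubo$, so $\B = \Okubo$. If $\B$ is a form of a para-Hurwitz algebra, then by Theorem~\ref{theorem:forms-of-para-Hurwitz} it is either para-Hurwitz itself or the distinguished two-dimensional algebra; the former is excluded, since its para-unit $e$ satisfies $e*e = e$ and would be a nonzero idempotent of $\Okubo$. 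Hence $\dim \B = 2$, which completes the dichotomy.
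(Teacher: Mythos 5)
Your proof is correct and takes essentially the same route as the paper: both establish strict nondegeneracy of $n|_{\B}$ by applying Eq.~\eqref{equation:Okubo-flexibility-linearized} to the triple $(x,x,x^2)$ for a hypothetical radical element $x$, deriving $x^2 * x^2 = -n(x)\,x^2$ (the paper phrases the resulting contradiction as linear dependence of $x^2$ and $(x^2)^2$ against Lemma~\ref{lemma:Okubo-subalgebras}(1), you as a nonzero idempotent --- equivalent statements here), and then both invoke Corollary~\ref{corollary:symmetric-composition-classification} together with Theorem~\ref{theorem:forms-of-para-Hurwitz}, excluding the para-Hurwitz case via its para-unit idempotent. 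The only cosmetic difference is that the paper settles $\chrs \F \neq 2$ at once from $n(x,x) = 2n(x) \neq 0$ and runs the computation only in characteristic $2$, whereas you run the same computation uniformly in all characteristics.
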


\begin{proof}
Since the norm on $\Okubo$ is anisotropic, the norm on~$\B$ is also anisotropic. Hence, if $\chrs \F \neq 2$, we immediately obtain that the norm on~$\B$ is strictly nondegenerate. Let now $\chrs \F = 2$. Assume from the contrary that the norm on~$\B$ is not strictly nondegenerate, that is, there exists some nonzero $x \in \B$ such that $n(x,y) = 0$ for all $y \in \B$. In particular, we can take $y = x^2$. Then, by Eq.~\eqref{equation:Okubo-flexibility-linearized}, we have $n(x)x^2 + (x^2)^2 = x(xx^2) + x^2(xx) = n(x,x^2)x = 0$, so $x^2$ and $(x^2)^2$ are linearly dependent. We obtain a contradiction with Lemma~\ref{lemma:Okubo-subalgebras}(1).

Therefore, in both cases the norm on~$\B$ is strictly nondegenerate, so~$\B$ is a composition algebra. Moreover, $\Okubo$ is a symmetric composition algebra without nonzero idempotents, so~$\B$ is also a symmetric composition algebra without nonzero idempotents. Since a para-unit is a nonzero idempotent in any para-Hurwitz algebra, it follows from Theorem~\ref{theorem:forms-of-para-Hurwitz} and Corollary~\ref{corollary:symmetric-composition-classification} that $\dim \B \in \{ 2, 8 \}$.
\end{proof}

\begin{corollary} \label{corollary:two-element-generating}
The set $S = \{ x, y \}$ is generating for $\Okubo$ if and only if $x, y$ do not belong to the same two-dimensional subalgebra of $\Okubo$.
\end{corollary}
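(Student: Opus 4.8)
The plan is to prove Corollary~\ref{corollary:two-element-generating} as a direct consequence of the two preceding structural results, Lemma~\ref{lemma:Okubo-subalgebras} and Lemma~\ref{lemma:Okubo-nondegenerate}. The statement is an ``if and only if'', so I would split it into the two implications and handle each by considering the subalgebra generated by $S = \{x,y\}$.

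\medskip

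First I would dispose of the easy direction (the contrapositive of necessity). Suppose $x$ and $y$ \emph{do} belong to a common two-dimensional subalgebra. If $x = 0$ then $S$ clearly cannot generate the $8$-dimensional algebra $\Okubo$, so assume $x \neq 0$. By Lemma~\ref{lemma:Okubo-subalgebras}(1) the subalgebra generated by $x$ alone is exactly $\Lin(x,x^2)$, which is two-dimensional; and if $y \in \Lin(x,x^2)$ then the subalgebra generated by $\{x,y\}$ is still contained in $\Lin(x,x^2)$. Hence $\Lin_\infty(S) \subseteq \Lin(x,x^2) \subsetneq \Okubo$, so $S$ is not generating.

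\medskip

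For the converse (sufficiency), suppose $x, y$ do not lie in a common two-dimensional subalgebra, i.e. $x \neq 0$ and $y \notin \Lin(x,x^2)$. Let $\B$ denote the subalgebra of $\Okubo$ generated by $S$. By Lemma~\ref{lemma:Okubo-subalgebras}(2) the four elements $x, x^2, y, y^2$ are linearly independent and all lie in $\B$, so $\dim \B \geq 4$. Now I invoke Lemma~\ref{lemma:Okubo-nondegenerate}, which asserts that any nonzero subalgebra of $\Okubo$ is either $\Okubo$ itself or has dimension exactly $2$. Since $\dim \B \geq 4 > 2$, the only remaining possibility is $\B = \Okubo$, i.e. $S$ is generating.

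\medskip

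I do not anticipate a genuine obstacle here: the corollary is essentially a packaging of the dichotomy established in Lemma~\ref{lemma:Okubo-nondegenerate} together with the linear-independence count from Lemma~\ref{lemma:Okubo-subalgebras}(2). The only point requiring a moment's care is making sure the generated object $\B$ is treated as a genuine subalgebra (closed under the $*$-product) so that Lemma~\ref{lemma:Okubo-nondegenerate} applies, and confirming it is nonzero, which is immediate since $x \neq 0 \in \B$. Everything else reduces to the observation that a $4$-dimensional (or larger) subalgebra cannot be one of the allowed dimensions $2$, forcing it to be all of $\Okubo$.
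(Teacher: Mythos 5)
Your proposal is correct and follows essentially the same route as the paper: the easy direction by containment in the two-dimensional subalgebra $\Lin(x,x^2)$, and the converse by noting the generated subalgebra has dimension exceeding $2$ and invoking the dichotomy of Lemma~\ref{lemma:Okubo-nondegenerate}. The only cosmetic difference is that you use Lemma~\ref{lemma:Okubo-subalgebras}(2) to get dimension at least $4$, whereas the paper only needs the immediate bound of $3$ from the linear independence of $x, x^2, y$; both suffice equally.
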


\begin{proof}
The implication from left to right is obvious. Assume now that $x, y$ do not belong to the same two-dimensional subalgebra of $\Okubo$, i.e., $x \neq 0$ and $y \notin \Lin(x, x^2)$. Then the subalgebra generated by $S$ has dimension at least three, and thus, by Lemma~\ref{lemma:Okubo-nondegenerate}, coincides with $\Okubo$.
\end{proof}

\begin{lemma} \label{lemma:two-element-generating}
If $S$ is generating for $\Okubo$ and $|S| = 2$, then $l(S) = 3$.
\end{lemma}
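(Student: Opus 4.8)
The plan is to show that for any two-element generating set $S = \{x, y\}$ of $\Okubo$ we have $l(S) = 3$, by proving both $l(S) \geq 3$ and $l(S) \leq 3$. Since $\Okubo$ is descendingly flexible with $\dim \Okubo = 8$, Proposition~\ref{proposition:differences-length} tells us that $l(S)$ is the largest index $k$ with $d_k \neq 0$, and $\sum_{k} d_k = 8$ because $S$ generates. By Corollary~\ref{corollary:two-element-generating}, the generating condition is equivalent to $x \neq 0$ and $y \notin \Lin(x, x^2)$, and by Lemma~\ref{lemma:Okubo-subalgebras}(2) the elements $x, x^2, y, y^2$ are then linearly independent, so $d_1 = \rank(S) = 2$ (the algebra is non-unital, so $d_0 = 0$).

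First I would analyze the sequence of differences $D(S) = \{d_k\}$. Since $d_0 = 0$, $d_1 = 2$, and the differences sum to $8$, we have $\sum_{k \geq 2} d_k = 6$. The key structural input is Proposition~\ref{proposition:differences-descendingly-flexible}: part~(1) says that once some $d_m = 0$ the sequence is identically zero afterwards, so the nonzero differences form an initial segment $d_1, \dots, d_{l(S)}$; and part~(2) says that if $d_m \geq 1$ for some $m \in \{3,4\}$, then $d_k \geq 2$ for every $1 \leq k \leq m-1$. I would use these to rule out $l(S) = 2$ and $l(S) \geq 4$. To exclude $l(S) \leq 2$: if $l(S) = 2$ then $d_2 = 6$, meaning $\dim \Lin_2(S) = 8 = \dim \Okubo$; but $\Lin_2(S)$ is spanned by $x, y, x^2, xy, yx, y^2$ (at most six elements, and by descending flexibility this is already the full span of length-two words), so $\dim \Lin_2(S) \leq 6 < 8$, a contradiction. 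Hence $l(S) \geq 3$.

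For the upper bound $l(S) \leq 3$, it suffices to show $l(S) \neq 4$ and, via Proposition~\ref{proposition:differences-descendingly-flexible}(1), that the sequence terminates by index~$3$; concretely I would show $d_4 = 0$. Suppose for contradiction that $d_4 \geq 1$. By Proposition~\ref{proposition:differences-descendingly-flexible}(2) applied with $m = 4$, this forces $d_1, d_2, d_3 \geq 2$, so $\dim \Lin_3(S) = d_1 + d_2 + d_3 \geq 6$ and $\dim \Lin_4(S) \geq 7$. The main obstacle is pinning down $\dim \Lin_2(S)$ precisely: since $x^2, y^2$ are independent from $x, y$ by Lemma~\ref{lemma:Okubo-subalgebras}(2), and descending flexibility via Eq.~\eqref{equation:Okubo-flexibility-linearized} controls the symmetric combination $xy + yx \in \Lin'_2(x,y)$, the span $\Lin_2(S)$ is generated by $x, y, x^2, y^2$ together with a single extra element, namely $xy$ (equivalently $yx$), giving $d_2 \leq 2$; combined with $d_2 \geq 2$ this yields $d_2 = 2$ exactly and $\dim \Lin_2(S) = 6$. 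Then $d_3 \geq 2$ would force $\dim \Lin_3(S) \geq 8$, i.e.\ $\Lin_3(S) = \Okubo$, whence $d_4 = 0$, contradicting $d_4 \geq 1$.

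The crux of the argument is therefore the careful bookkeeping of which length-two and length-three words survive modulo the descending-flexibility relations. I expect the hardest part to be establishing $d_2 = 2$ rigorously, since this requires verifying that $\Lin_2(S)$ is exactly five- or six-dimensional and in particular that only one genuinely new direction (beyond $x, y, x^2, y^2$) is contributed by the mixed products $xy, yx$; the symmetric combination $xy + yx$ lies in $\Lin_1(x,y)$ by Eq.~\eqref{equation:Okubo-flexibility-linearized} with $x = z$ replaced appropriately, so $xy$ and $yx$ contribute at most one independent new vector. Once $d_1 = d_2 = 2$ and the constraint $\sum d_k = 8$ are in hand, the combinatorics of Proposition~\ref{proposition:differences-descendingly-flexible} force $d_3 = 4$ and $d_4 = 0$ (the only way to reach $8$ while respecting the monotone-termination and the $d_k \geq 2$ constraints), pinning $l(S) = 3$ exactly.
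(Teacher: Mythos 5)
Your lower bound is fine: $\Lin_2(S)$ is spanned by the six words $x,y,x^2,y^2,xy,yx$, so $\dim \Lin_2(S) \leq 6 < 8 = \dim \Okubo$ and hence $l(S) \geq 3$. The gap is in the upper bound, and it is fatal: the key claim that $xy+yx$ lies in $\Lin(x,y,x^2,y^2)$, so that the mixed products contribute at most one new direction, is false in an Okubo algebra. The identity $a*b+b*a \in \Lin(e,a,b)$ is a feature of \emph{standard} composition algebras (Lemma~\ref{lemma:two-product-simplification}), where it comes from the trace and the unit; a symmetric composition algebra admits no such degree-two identity, and Eq.~\eqref{equation:Okubo-flexibility-linearized} cannot produce one, since every term on its left-hand side is a word of length three and $\Okubo$ is non-unital, so there is no element you can substitute for $z$ to shorten them. (Your observation that $xy+yx \in \Lin'_2(x,y)$ is vacuous: $\Lin'_2$ contains $xy$ and $yx$ by definition.) In fact the paper proves exactly the opposite of your claim: the entire substance of its proof is a delicate contradiction argument --- using Eq.~\eqref{equation:Okubo-flexibility-linearized}, the absence of zero divisors, and the absence of nonzero idempotents via Lemma~\ref{lemma:Okubo-subalgebras} --- showing that for a generating pair the six words $a,b,aa,ab,ba,bb$ are linearly \emph{independent}, i.e.\ $d_2 = 4$ and $\dim \Lin_2(S) = 6$. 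Once that is established, the finish is exactly your (and the paper's) final step: if $d_3 \leq 1$ then Proposition~\ref{proposition:differences-descendingly-flexible} gives $d_4 = 0$, hence $\sum_k d_k \leq 7 < 8$, contradicting generation; so $d_3 = 2$ and $l(S) = 3$.

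Two internal inconsistencies in your write-up signal the same problem. First, with $d_0 = 0$ and $d_1 = 2$, your claimed $d_2 = 2$ gives $\dim \Lin_2(S) = 4$, not $6$; so your step ``$d_3 \geq 2$ forces $\dim \Lin_3(S) \geq 8$'' does not follow from your own numbers. Second, even granting $d_2 = 2$, the sequence $d_1 = d_2 = d_3 = d_4 = 2$ sums to $8$ and satisfies both parts of Proposition~\ref{proposition:differences-descendingly-flexible}, so the difference-sequence combinatorics alone cannot force $d_4 = 0$: without the independence statement $d_2 = 4$, the case $l(S) = 4$ cannot be excluded by this route at all.
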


\begin{proof}
Let $S = \{ a, b \}$. Since $S$ is generating for $\Okubo$, we have $a \neq 0$ and $b \notin \Lin(a, a^2)$, so Lemma~\ref{lemma:Okubo-subalgebras}(2) implies that $a, a^2, b, b^2$ are linearly independent. Similarly to Example~\ref{example:Okubo-length}, we obtain that
\begin{align*}
    \Lin_1(S) &{}= \Lin(\{a,b\});\\
    \Lin_2(S) &{}= \Lin_1(S) + \Lin(\{aa,ab,ba,bb\});\\
    \Lin_3(S) &{}= \Lin_2(S) + \Lin(\{a(ab),b(ba),(ba)a,(ab)b\});\\
    \Lin_4(S) &{}= \Lin_3(S) + \Lin(S^{(4)}).
\end{align*}
Assume first that $a, b, aa, ab, ba, bb$ are linearly independent. Then, in terms of Definition~\ref{definition:sequence-differences}, we have $d_1 = 2$ and $d_2 = 4$. If $d_3 \leq 1$, then, by Proposition~\ref{proposition:differences-descendingly-flexible}, we have $d_4 = 0$, so $d_k = 0$ for all $k \geq 4$. Hence $\sum \limits_{k = 0}^{\infty} d_k \leq 7 < \dim \Okubo$, and Proposition~\ref{proposition:differences-length}(1) then implies that $S$ is not generating for $\Okubo$, a contradiction. Therefore, in this case $d_3 = 2$, so $d_1 + d_2 + d_3 = 8 = \dim \Okubo$, and thus $l(S) = 3$.

Let now $a, b, aa, ab, ba, bb$ be linearly dependent, that is, 
\begin{equation} \label{equation:two-element-generating-1}
k_1 a + k_2 b + k_3 aa + k_4 bb + k_5 ab + k_6 ba = 0    
\end{equation}
for some $k_j \in \F$, $j = 1, \dots, 6$, at least one of which is nonzero. We may assume without loss of generality that $k_5 = 1$. Then $ab \in \Lin(a, b, aa, bb, ba)$, so 
\begin{align*}
a(ab) &{}\in \Lin(aa, ab, a(aa), a(bb), a(ba)) \subseteq \Lin_2(S) + \Lin(\{b(ba)\}),\\
(ab)b &{}\in \Lin(ab, bb, (aa)b, (bb)b, (ba)b) \subseteq \Lin_2(S) + \Lin(\{(ba)a\}).
\end{align*}
Hence 
\begin{equation} \label{equation:two-element-generating-4}
\Lin_3(S) = \Lin_2(S) + \Lin(\{b(ba),(ba)a\}).  
\end{equation}
Note that, by Eqs.~\eqref{equation:Okubo-flexibility-linearized} and~\eqref{equation:two-element-generating-1}, we have
\begin{equation} \label{equation:two-element-generating-2}
\begin{aligned}
(bb)a &{}= n(a,b)b - (ab)b \\
&{}= n(a,b)b + (k_1 a + k_2 b + k_3 aa + k_4 bb + k_6 ba)b \\
&{}= n(a,b)b + k_1 ab + k_2 bb + k_3 (aa)b + k_4 (bb)b + k_6 (ba)b \\
&{}= k_6 n(b)a + (n(a,b) + k_4 n(b)) b + k_1 ab + k_2 bb + k_3 (n(a,b)a - (ba)a) \\
&{}= (k_3 n(a,b) + k_6 n(b))a + (n(a,b) + k_4 n(b)) b + k_1 ab + k_2 bb - k_3 (ba)a.
\end{aligned}
\end{equation}
Multiplying Eq.~\eqref{equation:two-element-generating-1} by $a$ on the right and substituting the expression for $(bb)a$ from Eq.~\eqref{equation:two-element-generating-2}, we obtain that
\begin{equation} \label{equation:two-element-generating-3}
\begin{aligned}
0 &{}= k_1 aa + k_2 ba + k_3 (aa)a + k_4 (bb)a + (ab)a + k_6 (ba)a \\
&{}= k_1 aa + k_2 ba + k_3 n(a)a + n(a)b + k_6 (ba)a \\
&{}+ k_4 ((k_3 n(a,b) + k_6 n(b))a + (n(a,b) + k_4 n(b)) b + k_1 ab + k_2 bb - k_3 (ba)a) \\
&{}= (k_3n(a) + k_3 k_4 n(a,b) + k_4 k_6 n(b))a + (n(a) + k_4 n(a,b) + k_4^2 n(b)) b \\
&{}+ k_1 aa + k_2 ba + k_1 k_4 ab + k_2 k_4 bb + (k_6 - k_3 k_4) (ba)a.
\end{aligned}
\end{equation}
If $k_6 \neq k_3 k_4$, then $(ba)a \in \Lin_2(S)$. Similarly, multiplying Eq.~\eqref{equation:two-element-generating-1} by $b$ on the left, one can show that in this case $b(ba) \in \Lin_2(S)$. Then, by Eq.~\eqref{equation:two-element-generating-4}, we have $\Lin_3(S) = \Lin_2(S)$, so $d_1 = 2$, $d_2 \leq 3$, and $d_k = 0$ for all $k \geq 3$. Therefore, $S$ is not generating for $\Okubo$, a contradiction. Thus we have $k_6 = k_3 k_4$, and Eq.~\eqref{equation:two-element-generating-3} takes the form
\begin{equation} \label{equation:two-element-generating-5}
n(a + k_4 b)(k_3 a + b) + k_1 aa + k_2 ba + k_1 k_4 ab + k_2 k_4 bb = 0,
\end{equation}
since $n(a) + k_4 n(a,b) + k_4^2 n(b) = n(a) + n(a, k_4 b) + n(k_4 b) = n(a + k_4 b)$.

Assume that $k_4 = 0$. Then it follows from Eqs.~\eqref{equation:two-element-generating-1} and~\eqref{equation:two-element-generating-5} that
\begin{align}
k_1 a + k_2 b + k_3 aa + ab &{}= 0, \label{equation:two-element-generating-6}\\
n(a)(k_3 a + b) + k_1 aa + k_2 ba &{}= 0. \label{equation:two-element-generating-7}
\end{align}
If $k_2 = 0$, then Eq.~\eqref{equation:two-element-generating-7} implies that $b \in \Lin(a, a^2)$, a contradiction. Hence $k_2 \neq 0$, and thus it follows from Eqs.~\eqref{equation:two-element-generating-6} and~\eqref{equation:two-element-generating-7} that $ab, ba \in \Lin(a, b, aa)$. Then $(ba)a \in \Lin(aa, ba, (aa)a) \subseteq \Lin_2(S)$ and $b(ba) \in \Lin(b(aa)) + \Lin_2(S) = \Lin(a(ab)) + \Lin_2(S) = \Lin(a(aa)) + \Lin_2(S) = \Lin_2(S)$. By Eq.~\eqref{equation:two-element-generating-4}, we have $\Lin_3(S) = \Lin_2(S)$, so $S$ is not generating for $\Okubo$, a contradiction. Therefore, $k_4 \neq 0$ and, similarly, $k_3 \neq 0$.

As a consequence, Eq.~\eqref{equation:two-element-generating-5} must be proportional to Eq.~\eqref{equation:two-element-generating-1}, since otherwise we can multiply Eq.~\eqref{equation:two-element-generating-5} by some $\alpha \in \F$ and subtract it from Eq.~\eqref{equation:two-element-generating-1}, so that either exactly one of the coefficients at $ab$ and $ba$ is nonzero, or both coefficients at $ab$ and $ba$ are zero, but some nontrivial linear combination of $a, a^2, b, b^2$ is zero.

Let us denote $p = n(a + k_4 b)$. Then we have
\begin{equation} \label{equation:two-element-generating-8}
\rk
\begin{pmatrix}
k_1 & k_2 & k_3 & k_4 & 1 & k_3 k_4\\
k_3 p & p & k_1 & k_2 k_4 & k_1 k_4 & k_2
\end{pmatrix}
= 1.
\end{equation}
Since $k_4 \neq 0$, we immediately obtain from the fourth and the fifth columns that $k_2 = k_1 k_4$, so $k_1$ and $k_2$ are equal to zero or not equal to zero simultaneously. Assume that $k_1 = k_2 = 0$. Then Eq.~\eqref{equation:two-element-generating-1} implies that
$$
(a + k_4 b)(k_3 a + b) = k_3 aa + k_4 bb + ab + k_3 k_4 ba = 0.
$$
But $a + k_4 b$ and $k_3 a + b$ are nonzero, and there are no zero divisors in $\Okubo$, a contradiction. Hence $k_1 \neq 0$ and $k_2 \neq 0$. It then follows from the fifth and the sixth columns in Eq.~\eqref{equation:two-element-generating-8} that $k_3 k_4 = 1$. Multiplying  Eq.~\eqref{equation:two-element-generating-1} by $k_4$, we obtain that
\begin{align*}
    0 &{}= k_1 k_4 a + k_1 k_4^2 b + aa + k_4^2 bb + k_4 ab + k_4 ba \\
    &{}= k_1 k_4 (a + k_4 b) + (a + k_4 b)^2.
\end{align*}
But $a + k_4 b \neq 0$, so $a + k_4 b$ and $(a + k_4 b)^2$ are linearly independent, a contradiction. Therefore, $a, b, aa, ab, ba, bb$ must be linearly independent.
\end{proof}

\begin{theorem} \label{theorem:Okubo-length-exceptional}
Let $\Okubo$ be an Okubo algebra over an arbitrary field~$\F$ without nonzero idempotents and zero divisors. Then $l(\Okubo) = 3$.
\end{theorem}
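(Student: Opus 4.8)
The plan is to prove the two inequalities $l(\Okubo) \ge 3$ and $l(\Okubo) \le 3$ separately, exploiting the fact that all the genuine computational work has already been carried out in Lemma~\ref{lemma:two-element-generating}. For the lower bound I would first fix any nonzero $x \in \Okubo$; since $\dim \Okubo = 8 > 2 = \dim \Lin(x, x^2)$, there is some $y \in \Okubo \setminus \Lin(x, x^2)$. By Corollary~\ref{corollary:two-element-generating} the set $S = \{ x, y \}$ is generating for $\Okubo$, and Lemma~\ref{lemma:two-element-generating} gives $l(S) = 3$. Hence $l(\Okubo) \ge 3$ by Definition~\ref{definition:algebra-length}.

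The upper bound is the substantive direction, because it requires controlling $l(S)$ for an \emph{arbitrary} generating set $S$, of any cardinality, not merely for two-element ones. The key reduction is that every generating set must contain a generating pair. To see this I would fix any nonzero $x_0 \in S$ and argue by contradiction: if every $y \in S$ satisfied $y \in \Lin(x_0, x_0^2)$, then $S$ would be contained in the two-dimensional subalgebra $\Lin(x_0, x_0^2)$ furnished by Lemma~\ref{lemma:Okubo-subalgebras}(1), whence $\Lin_{\infty}(S) \subseteq \Lin(x_0, x_0^2) \neq \Okubo$, contradicting that $S$ generates $\Okubo$. Therefore there exist $x, y \in S$ with $y \notin \Lin(x, x^2)$, i.e., not lying in a common two-dimensional subalgebra.

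With such a pair in hand, Corollary~\ref{corollary:two-element-generating} shows that $\{ x, y \}$ already generates $\Okubo$, and Lemma~\ref{lemma:two-element-generating} yields $l(\{x,y\}) = 3$, so $\Lin_3(\{ x, y \}) = \Lin_{\infty}(\{ x, y \}) = \Okubo$. Since $\{ x, y \} \subseteq S$ implies $\Lin_3(\{ x, y \}) \subseteq \Lin_3(S)$ by monotonicity of the spans under inclusion, we obtain $\Lin_3(S) = \Okubo$, that is, $l(S) \le 3$. Taking the maximum over all generating sets $S$ gives $l(\Okubo) \le 3$, and combined with the lower bound this yields $l(\Okubo) = 3$.

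The main obstacle has in effect already been dispatched by Lemma~\ref{lemma:two-element-generating}, whose delicate case analysis forces $a, b, aa, ab, ba, bb$ to be linearly independent for any generating pair; what remains is the structural observation that any generating system contains a generating pair, after which the inclusion $\Lin_3(\{x,y\}) \subseteq \Lin_3(S)$ transfers the three-step generation from the pair to all of $S$. The one point deserving care is the case $\rank(S) = 2$: here the two independent elements of $S$ cannot share a two-dimensional subalgebra (otherwise $\Lin_1(S)$ would itself be that subalgebra and $S$ would fail to generate $\Okubo$), so this case is subsumed by the same argument.
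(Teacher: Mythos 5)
Your proposal is correct and takes essentially the same approach as the paper: the lower bound via a generating pair from Corollary~\ref{corollary:two-element-generating} and Lemma~\ref{lemma:two-element-generating}, and the upper bound by observing that any generating set must contain two elements not lying in a common two-dimensional subalgebra, which then form a generating pair of length~$3$. The only difference is that you spell out the monotonicity $\Lin_3(\{x,y\}) \subseteq \Lin_3(S)$ behind the inequality $l(S) \leq l(\{x,y\})$, which the paper leaves implicit.
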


\begin{proof}
By Corollary~\ref{corollary:two-element-generating} and Lemma~\ref{lemma:two-element-generating}, there exists a generating system $S = \{ x, y \}$ for~$\Okubo$ with $l(S) = 3$, so $l(\Okubo) \geq 3$.

Let now $S$ be an arbitrary generating system for $\Okubo$. Then there exist $x, y \in S$ which do not belong to the same two-dimensional subalgebra of $\Okubo$, since otherwise $S$ is contained in a two-dimensional subalgebra generated by any nonzero element from $S$. By Corollary~\ref{corollary:two-element-generating} and Lemma~\ref{lemma:two-element-generating}, the system $S' = \{ x, y \} \subseteq S$ is generating for $\Okubo$, and $l(S) \leq l(S') = 3$. Thus $l(\Okubo) = 3$.
\end{proof}

\begin{proposition} \label{proposition:non-para-Hurwitz}
Let~$\A$ be a two-dimensional algebra from Theorem~\ref{theorem:forms-of-para-Hurwitz}(2). Then ${l(\A) = 2}$.
\end{proposition}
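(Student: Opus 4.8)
We are given a two-dimensional algebra $\A$ with a basis $\{ u, v \}$ satisfying $u^2 = v$, $uv = vu = u$, and $v^2 = \lambda u - v$ for some $\lambda \in \F$ with $x^3 - 3x - \lambda$ irreducible over $\F$. We must show $l(\A) = 2$.

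The plan is to analyze the lengths of all possible generating systems directly, using the explicit multiplication table. First I would establish the lower bound $l(\A) \geq 2$. The natural candidate for a single generator is $u$ itself: since $u^2 = v$, the system $S = \{ u \}$ has $\Lin_1(S) = \Lin(u)$ and $\Lin_2(S) = \Lin(u, v) = \A$, so if $S$ is generating then $l(S) = 2$ provided $\Lin_1(S) \neq \A$, which holds since $\dim \Lin(u) = 1 < 2$. Thus it suffices to confirm $\{ u \}$ generates $\A$, which is immediate from $u^2 = v$. This already gives $l(\A) \geq 2$.

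For the upper bound $l(\A) \leq 2$, I would take an arbitrary generating system $S$ and show $\Lin_2(S) = \A$. By Remark~\ref{remark:equal-linear-spans} we may assume $S$ is linearly independent, so $|S| \in \{ 1, 2 \}$. If $|S| = 2$, then $\Lin_1(S) = \A$ and $l(S) \leq 1$, so the only nontrivial case is $|S| = 1$, say $S = \{ a \}$ with $a = \xi u + \eta v$ for $\xi, \eta \in \F$. Here the key computation is to express $a^2$ in the basis $\{ u, v \}$ using the multiplication table, namely
\begin{equation*}
a^2 = (\xi u + \eta v)^2 = \xi^2 u^2 + \xi \eta (uv + vu) + \eta^2 v^2 = (\eta^2 \lambda + 2 \xi \eta) u + (\xi^2 - \eta^2) v.
\end{equation*}
Then $\Lin_2(S) = \Lin(a, a^2)$, and this equals $\A$ precisely when $a$ and $a^2$ are linearly independent, i.e.\ when the determinant
\begin{equation*}
\det \begin{pmatrix} \xi & \eta \\ \eta^2 \lambda + 2 \xi \eta & \xi^2 - \eta^2 \end{pmatrix} = \xi^3 - \xi \eta^2 - \eta^3 \lambda - 2 \xi^2 \eta
\end{equation*}
is nonzero.

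The main obstacle is to rule out the possibility that this determinant vanishes for some generating $a$, since if it did vanish then $\{ a \}$ would fail to generate $\A$ in two steps (indeed, $\Lin(a, a^2) = \Lin(a)$ would force $a$ to span a one-dimensional subalgebra, contradicting that $S$ generates). I expect the irreducibility hypothesis on $x^3 - 3x - \lambda$ to be exactly what prevents this degeneracy. Concretely, for $a$ to generate $\A$ at all we need $\eta \neq 0$ (otherwise $a = \xi u$ lies in a proper subalgebra, since $u^2 = v$ would need revisiting — actually $\{u\}$ does generate, so the real constraint is elsewhere); after normalizing and setting $t = \xi / \eta$, the vanishing determinant becomes a cubic equation in $t$ that, up to scaling and a possible linear substitution, matches $x^3 - 3x - \lambda$. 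Since that polynomial has no root in $\F$, the determinant cannot vanish for any generating $a$, forcing $\Lin_2(S) = \A$ and hence $l(S) \leq 2$. I would carry out the bookkeeping to confirm the cubic in $t$ obtained from the determinant is, after a substitution such as $t \mapsto$ a suitable affine shift, a nonzero scalar multiple of $\eta^{-3}(x^3 - 3x - \lambda)$ evaluated appropriately; verifying this correspondence precisely is the delicate step, but once it is in place the conclusion $l(\A) = 2$ follows immediately by combining both bounds.
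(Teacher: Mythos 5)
Your argument, once stripped of the irreducibility detour, is correct and coincides with the paper's one-line proof (``Clearly, $l(\A) \leq \dim \A = 2$ and $S = \{u\}$ is generating for~$\A$ with $l(S) = 2$''). Your lower bound via $S = \{u\}$ is identical to the paper's. For the upper bound, your parenthetical remark already \emph{is} the whole argument: if $a$ and $a^2$ were linearly dependent, then $\Lin(a)$ would be a one-dimensional subalgebra containing $a$, hence $\Lin_{\infty}(\{a\}) = \Lin(a) \neq \A$ and $\{a\}$ would not generate. Contrapositively, every generating singleton $\{a\}$ has $a, a^2$ linearly independent, so $\Lin_2(\{a\}) = \A$ and $l(\{a\}) \leq 2$; together with the trivial case $|S| = 2$ this gives $l(\A) \leq 2$. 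Note that this uses the irreducibility of $x^3 - 3x - \lambda$ nowhere: that hypothesis is part of the classification in Theorem~\ref{theorem:forms-of-para-Hurwitz}(2), but it plays no role in the length computation.

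What you declare to be the main line --- proving the determinant never vanishes via irreducibility --- is both unnecessary and, as written, broken. The expansion of your determinant contains an arithmetic slip: $\eta(\eta^2\lambda + 2\xi\eta) = \lambda\eta^3 + 2\xi\eta^2$, not $\lambda\eta^3 + 2\xi^2\eta$, so the correct determinant is $\xi^3 - 3\xi\eta^2 - \lambda\eta^3$. With this correction your ``delicate step'' evaporates: for $\eta \neq 0$ the determinant equals $\eta^3\bigl(t^3 - 3t - \lambda\bigr)$ with $t = \xi/\eta$, i.e.\ it is \emph{literally} the polynomial from the theorem with no affine shift needed (and for $\eta = 0$ it equals $\xi^3 \neq 0$); irreducibility then yields the stronger fact that every nonzero element of $\A$ generates it with length exactly $2$. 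By contrast, the cubic you actually obtained, $t^3 - 2t^2 - t - \lambda$, is \emph{not} a disguised copy of $x^3 - 3x - \lambda$ under any substitution: over $\mathbb{Q}$ with $\lambda = 6$ the polynomial $x^3 - 3x - 6$ is irreducible, yet $t^3 - 2t^2 - t - 6$ has the root $t = 3$; and indeed for $a = 3u + v$ one computes $a^2 = 12u + 8v$, so the true determinant is $12 \neq 0$. Had you carried out the deferred bookkeeping with your determinant, the hoped-for correspondence would have failed. Either fix the expansion, or better, promote your parenthetical to the proof and delete the detour.
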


\begin{proof}
Clearly, $l(\A) \leq \dim \A = 2$ and $S = \{ u \}$ is generating for~$\A$ with ${l(S) = 2}$.
\end{proof}

\begin{corollary} \label{corollary:all-lengths}
Together with Theorem~\ref{theorem:standard-length}, where the lengths of Hurwitz and para-Hurwitz algebras were computed, Theorems~\ref{theorem:Okubo-length} and~\ref{theorem:Okubo-length-exceptional} and Proposition~\ref{proposition:non-para-Hurwitz} give a complete description of lengths of symmetric composition algebras and finite-dimensional flexible composition algebras over an arbitrary field~$\F$.
\end{corollary}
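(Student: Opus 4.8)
The plan is to prove the corollary by assembling the classification theorems quoted above with the length computations established in this paper, organizing everything into an exhaustive and mutually exclusive case analysis. Since the corollary merely asserts that the cited results \emph{together} yield a complete description, the task is essentially to verify that every algebra in the two stated classes falls into exactly one branch whose length has already been determined.

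First I would handle finite-dimensional flexible composition algebras. By Theorem~\ref{theorem:flexible-composition-classification}, any such algebra~$\A$ is either unital, hence a Hurwitz algebra, or symmetric. A Hurwitz algebra is precisely a standard composition algebra of type~I, so its length equals $\log_2(\dim \A)$ by Theorem~\ref{theorem:standard-length}(1). This reduces the flexible case to the symmetric case, which I treat next.

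Symmetric composition algebras are automatically finite-dimensional by Corollary~\ref{corollary:symmetric-composition-classification}, which splits them into forms of para-Hurwitz algebras and Okubo algebras. For a form of a para-Hurwitz algebra, Theorem~\ref{theorem:forms-of-para-Hurwitz} gives two subcases: either~$\A$ is para-Hurwitz itself, i.e.\ a standard composition algebra of type~IV whose length is furnished by parts (3) and (4) of Theorem~\ref{theorem:standard-length}, or~$\A$ is the exceptional two-dimensional algebra of Theorem~\ref{theorem:forms-of-para-Hurwitz}(2), whose length equals~$2$ by Proposition~\ref{proposition:non-para-Hurwitz}. For an Okubo algebra~$\Okubo$, I would split according to the presence of nonzero idempotents or zero divisors: if either is present, Theorem~\ref{theorem:Okubo-length} gives $l(\Okubo) = 4$; otherwise Theorem~\ref{theorem:Okubo-length-exceptional} gives $l(\Okubo) = 3$.

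Collecting the cited length values across these branches completes the description. There is no genuine mathematical obstacle here, since the statement is a synthesis of results already proved; the only point requiring care is the bookkeeping, namely verifying that each branch of the two classifications is matched to exactly one length statement. In particular, the single subtlety worth spelling out is that para-Hurwitz algebras must be correctly identified with standard composition algebras of type~IV, so that Theorem~\ref{theorem:standard-length} applies to them without appealing to any additional argument.
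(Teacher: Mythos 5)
Your proposal is correct and follows essentially the same route as the paper's own proof: the identical chain of case splits via Theorem~\ref{theorem:flexible-composition-classification}, Corollary~\ref{corollary:symmetric-composition-classification}, and Theorem~\ref{theorem:forms-of-para-Hurwitz}, with the lengths then read off from Theorems~\ref{theorem:standard-length}, \ref{theorem:Okubo-length}, \ref{theorem:Okubo-length-exceptional} and Proposition~\ref{proposition:non-para-Hurwitz}. Your explicit identification of para-Hurwitz algebras with standard composition algebras of type~IV is a detail the paper leaves implicit, but it is the correct reading and adds nothing beyond bookkeeping.
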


\begin{proof}
By Theorem~\ref{theorem:flexible-composition-classification}, if~$\A$ is a finite-dimensional flexible composition algebra, then~$\A$ is either a Hurwitz algebra or a symmetric composition algebra. In the latter case, by Corollary~\ref{corollary:symmetric-composition-classification},~$\A$ is either an Okubo algebra or a form of a para-Hurwitz algebra. By Theorem~\ref{theorem:forms-of-para-Hurwitz}, the second case implies that~$\A$ is either a para-Hurwitz algebra or a two-dimensional algebra from Theorem~\ref{theorem:forms-of-para-Hurwitz}(2). The length of~$\A$ is known in all cases.
\end{proof}

\end{document}